\theoremstyle{plain}
\newtheorem{lemma}{Lemma}[section]
\newtheorem{theorem}[lemma]{Theorem}
\newtheorem{cor}[lemma]{Corollary}
\theoremstyle{definition}
\newtheorem{remark}[lemma]{Remark}
\newcommand{\R}{{\mathbb{R}}}
\begin{document}
\title{xxx}
\date{\today}
 \title{Decay estimates for Rivi\`ere's equation, with applications to regularity and compactness}
\author{Ben Sharp and Peter Topping}
\maketitle 

\begin{abstract}
We derive a selection of energy estimates for a generalisation
of a critical equation on the unit disc in $\mathbb{R}^2$ 
introduced by Rivi\`ere. Applications
include sharp regularity results and compactness theorems which
generalise a large amount of previous geometric PDE theory, 
including some of the theory of harmonic and almost-harmonic 
maps from surfaces.
\end{abstract}

\section{Introduction}

Suppose $u\in W^{1,2}(B_1,\mathbb{R}^m)$ is a weak solution to 
\begin{equation} \label{riv_eqn}
-\Delta u = \Omega.\nabla u
\end{equation}
where here and throughout this paper 
$B_1$ is the unit disc in $\R^2$, 
$\Omega \in L^2(B_1,so(m)\otimes \mathbb{R}^2)$, and we are
using the notation 
$[\Omega .\nabla u]^i=\langle \Omega^i_j,\nabla u^j \rangle$.
This equation, first considered in this generality by Rivi\`ere
\cite{riviere_inventiones}, generalises a number of interesting
equations appearing naturally in geometry, including the harmonic
map equation, the $H$-surface equation and, more generally, the
Euler-Lagrange equation of any conformally invariant
elliptic Lagrangian which is quadratic in the gradient.
A central issue is the regularity of $u$ implied by virtue of it
satisfying the equation \eqref{riv_eqn}. 
\emph{A priori}, the right-hand side of the equation looks like quite a general $L^1$ function, and 
standard elliptic regularity theory does not seem to help.
However, Rivi\`ere \cite{riviere_inventiones}
showed that any solution must necessarily be
continuous and even in 
$W^{2,1+\epsilon}$ for some $\epsilon>0$ \cite{riviere_notes}, thus generalising the famous regularity theory of
H\'elein \cite{helein_regularity}, for example. 
In most known interesting special cases of this equation,
one happens to know that $|\Omega|$ can be estimated linearly
in terms of $|\nabla u|$, i.e. we have 
$|\Omega.\nabla u|\leq C|\nabla u|^2$
and then a standard bootstrapping
argument can be applied to improve
the regularity of $u$ first to $C^{1,\alpha}$, and then
(via Schauder) to smoothness.

In this paper we investigate what sort of regularity and 
compactness properties we can deduce for solutions of the 
general equation \eqref{riv_eqn}, and even more general
inhomogeneous equations with the same special structure.
It is easy to convince oneself that it is unreasonable to
expect regularity better than $W^{2,2}$ in general.
However,
we will show that we \emph{do} have regularity up to this level,
or the best possible regularity when there is an inhomogeneity.

\begin{theorem}
\label{optimal_regularity}
Suppose that $u\in W^{1,2}(B_1,\mathbb{R}^m)$ 
is a weak solution on the unit disc in $\mathbb{R}^2$ to 
\begin{equation}\label{eqn2}
-\Delta u = \Omega.\nabla u +f, \,\,\,\,\,\,\,\,\,\,\, f \in L^p(B_1,\mathbb{R}^m)
\end{equation}
where $\Omega \in L^2(B_1,so(m)\otimes \mathbb{R}^2)$
and $p\in (1,2)$. Then 
$u\in W_{loc}^{2,p}(B_1)$. In particular, if $f\equiv 0$, then
$u\in W_{loc}^{2,p}$ for all $p\in [1,2)$ and 
$u\in W_{loc}^{1,q}$ for all $q\in [1,\infty)$.

Moreover, for $U\subset\subset B_1$, there exist 
$\eta_0=\eta_0(p,m)>0$ and $C=C(p,m,U)<\infty$
so that if $\|\Omega\|_{L^2(B_1)}\leq \eta_0$ then
\begin{equation}
\label{w2pest}
\|u\|_{W^{2, p}(U)} 
\leq C(\| f\|_{L^p(B_1)} + \|u\|_{L^1(B_1)}).
\end{equation}
\end{theorem}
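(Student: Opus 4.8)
The plan is to combine Rivi\`ere's change of gauge with a Hodge decomposition to rewrite \eqref{eqn2} in a favourable form, then extract a Morrey--type decay estimate for $\nabla u$, and finally bootstrap this to the sharp exponent. Since the first assertion is local and $\|\Omega\|_{L^2(B_\rho(x))}\to 0$ as $\rho\to 0$, after translating and rescaling I may assume $\|\Omega\|_{L^2(B_1)}\le\eta_0$ with $\eta_0$ as small as needed (for \eqref{w2pest} this is already assumed). Then I invoke Rivi\`ere's construction: for $\eta_0$ small there exist $A\in W^{1,2}\cap L^\infty(B_1,GL(m))$ with $A^{-1}\in L^\infty$, $\|A-\mathrm{id}\|_{L^\infty}+\|\nabla A\|_{L^2}\le C\eta_0$, and $B\in W^{1,2}(B_1,M(m)\otimes\R^2)$ with $\|\nabla B\|_{L^2}\le C\eta_0$, solving $\nabla A-A\Omega=\nabla^\perp B$; substituting into \eqref{eqn2} yields the conservation law $\mathrm{div}(A\nabla u+B\nabla^\perp u)=-Af$, with $Af\in L^p$. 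Let $\phi$ solve $\Delta\phi=-Af$ on $B_1$ with zero trace, so $\phi\in W^{2,p}(B_1)$ and $\nabla\phi\in W^{1,p}\hookrightarrow L^{2p/(2-p)}$ by Calder\'on--Zygmund theory and Sobolev embedding; since $A\nabla u+B\nabla^\perp u-\nabla\phi$ is divergence free it equals $\nabla^\perp E$ for some $E\in W^{1,2}(B_1)$ with $\|\nabla E\|_{L^2}\lesssim\|\nabla u\|_{L^2}+\|\nabla\phi\|_{L^2}$. Solving for $\nabla u$ and taking a divergence, using $\mathrm{div}(A^{-1}\nabla^\perp E)=\nabla A^{-1}\cdot\nabla^\perp E$, $\mathrm{div}(A^{-1}B\nabla^\perp u)=\nabla(A^{-1}B)\cdot\nabla^\perp u$ and $A^{-1}\Delta\phi=-f$, I obtain
\[
\Delta u=\nabla A^{-1}\cdot\nabla^\perp E-\nabla(A^{-1}B)\cdot\nabla^\perp u+\nabla A^{-1}\cdot\nabla\phi-f .
\]
The first two terms are Jacobians of $W^{1,2}$ maps, hence lie in the Hardy space $\mathcal H^1$ by the div--curl lemma, while $\nabla A^{-1}\cdot\nabla\phi$ (a product of an $L^2$ and an $L^{2p/(2-p)}$ function) and $f$ lie in $L^p$; already this yields $u\in W^{2,1}_{loc}$ and $\nabla u\in L^{2,1}_{loc}$.

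Next I would establish a decay estimate. Comparing $u$ on a small ball $B_r$ with the harmonic function sharing its boundary trace, controlling the Jacobian remainder by Wente's inequality --- whose constant carries the factor $\|\nabla A\|_{L^2}+\|\nabla B\|_{L^2}\lesssim\eta_0$ --- and the $\nabla\phi$ and $f$ remainders by $L^p$ theory, and then iterating over dyadic scales using the scale invariance of \eqref{eqn2} (which preserves $\|\Omega\|_{L^2}$ at every scale), one obtains, for any prescribed $\beta<1$ and correspondingly small $\eta_0=\eta_0(\beta)$,
\[
\int_{B_r}|\nabla u|^2\le C\,r^{2\beta}\,\|\nabla u\|_{L^2(B_1)}^2+C\,r^{2(2-2/p)}\,\|f\|_{L^p(B_1)}^2 ,
\]
the rate $2(2-2/p)$ of the inhomogeneous part being forced by $\|f\|_{L^p(B_r)}^p\lesssim r^{2p-2}\|f\|_{L^p(B_1)}^p$ under rescaling.

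The bootstrap is where the real work lies. Feeding the decay estimate into $-\Delta u=\Omega\cdot\nabla u+f$, H\"older's inequality gives $\int_{B_r}|\Omega\cdot\nabla u|\lesssim r^{\gamma}$ with $\gamma$ as close to $\min(1,2-2/p)$ as desired, and likewise $\int_{B_r}|f|\lesssim r^{2-2/p}$, so $\Delta u$ lies in a Morrey class of the form $\sup_r r^{-\gamma'}\int_{B_r}|\Delta u|<\infty$; an Adams--type estimate for the Riesz potential $I_1=\nabla\Delta^{-1}$ on Morrey spaces then promotes $\nabla u$ from $L^2$ to $L^q_{loc}$ with a strictly larger $q$, with a quantified Morrey decay. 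When $f\equiv0$ this may be run with $\beta$ arbitrarily close to $1$, so $\nabla u\in L^q_{loc}$ for every $q<\infty$; reinserting into the equation then gives $\Delta u\in L^s_{loc}$ for every $s<2$, i.e.\ $u\in W^{2,s}_{loc}$ for all $s<2$, as stated in the theorem for $f\equiv0$. For $f\in L^p$ I would iterate this step --- each round improving $q$ while the decay exponent stays pinned by the $f$-contribution --- the main point being to verify that the exponents converge to $q=2p/(2-p)$; at that value $\Omega\cdot\nabla u\in L^p_{loc}$, hence $\Delta u\in L^p_{loc}$, and Calder\'on--Zygmund gives $u\in W^{2,p}_{loc}$. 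I expect the delicate part to be exactly this last bookkeeping: matching the Adams--type Riesz-potential estimates in (Lorentz--)Morrey spaces against the Hardy-space structure of the Jacobian terms so that the iteration lands precisely on the exponent $p$ rather than on some $s<p$.

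Finally, for the quantitative bound \eqref{w2pest} I would rerun the scheme keeping every constant explicit; the one additional ingredient is an a priori estimate $\|\nabla u\|_{L^2(B_{3/4})}\le C\bigl(\|f\|_{L^p(B_1)}+\|u\|_{L^1(B_1)}\bigr)$. This I would get from a Caccioppoli-type inequality for \eqref{eqn2}, testing with $(u-c)\zeta^2$: the only dangerous term is $\int\langle\Omega\cdot\nabla u,(u-c)\zeta^2\rangle$, which can be absorbed into the left-hand side using $\|\Omega\|_{L^2}\le\eta_0$ together with a Wente/div--curl estimate for the test-function product; after this one is in the standard situation of controlling interior $W^{2,p}$ norms by the $L^1$ norm of $u$ and the $L^p$ norm of the inhomogeneity, and feeding the bound through the scheme gives \eqref{w2pest}. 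This Caccioppoli step, non-trivial only because of the critical nonlinearity, is the secondary technical obstacle.
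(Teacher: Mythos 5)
Your overall architecture (Rivi\`ere gauge, Hodge decomposition, Wente/div--curl estimates, a Morrey decay for $\nabla u$, then an Adams--Riesz-potential bootstrap) is exactly the paper's, and everything up to and including the statement ``$u\in W^{2,\gamma}_{loc}$ for all $\gamma<p$'' is sound. The genuine gap is the one you yourself flag and then leave open: the bootstrap map $s\mapsto \frac{2s}{2+s-p}$ has $s=p$ as its fixed point, approached from below but never attained in finitely many steps, so the iteration \emph{cannot} ``land precisely on the exponent $p$''; it only delivers $\Omega.\nabla u\in L^s_{loc}$ for every $s<p$, hence $u\in W^{2,\gamma}_{loc}$ for every $\gamma<p$. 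The paper closes the endpoint by an entirely separate mechanism that your proposal is missing: a quantitative estimate $\|u\|_{W^{2,\gamma}(B_{1/2})}\le C(p,m)\bigl(\|f\|_{L^p(B_1)}+\|u\|_{L^1(B_1)}\bigr)$ valid for all $\gamma\in(\tfrac{p+1}{2},p)$ with a constant \emph{independent of} $\gamma$, obtained by tracking the blow-up of the Calder\'on--Zygmund constant as $\frac{C}{\gamma-1}$ and of the Sobolev constant $W^{1,\gamma}\hookrightarrow L^{2\gamma/(2-\gamma)}$ as $\frac{C}{2-\gamma}$ (both bounded in terms of $p$ alone since $\tfrac{p+1}{2}<\gamma<p<2$), combined with Leon Simon's absorption lemma to remove the $\|\nabla u\|_{L^{2\gamma/(2-\gamma)}}$ term from the right-hand side. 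One then lets $\gamma\nearrow p$ in this uniform estimate to obtain $W^{2,p}$. Without some such uniformity-plus-limit argument your proof stops strictly short of the claimed regularity.

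A secondary concern is your proposed Caccioppoli inequality for the a priori bound $\|\nabla u\|_{L^2(B_{3/4})}\le C(\|f\|_{L^p}+\|u\|_{L^1})$: testing with $(u-c)\zeta^2$, the term $\int\langle\Omega.\nabla u,(u-c)\zeta^2\rangle$ cannot be absorbed by H\"older alone ($u-c$ is not bounded, $\Omega.\nabla u$ is only $L^1$ and has no Hardy-space structure before the gauge change), and it is not clear the div--curl pairing you invoke applies to this particular product; indeed the estimate is false without antisymmetry of $\Omega$, so any absorption argument must use the gauge in an essential way. The paper instead obtains this bound from its interior decay estimate \eqref{int1} (whose right-hand side involves only $\|u\|_{L^1}$, $\|\Omega\|_{L^2}\|\nabla u\|_{L^2}$ and $f$) rescaled to all balls $B_R(x_0)$, followed by the same absorption lemma of Simon; you would do better to route your quantitative bound through your own decay estimate in the same way.
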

This theorem omits the borderline case $p=2$ for good reason;
even in the case that $f\equiv 0$, one can find solutions
so that $u$ is neither $W^{2,2}$ nor Lipschitz. Moreover,
examples with $f\equiv 0$ show that the first derivatives 
of $u$ need not even lie in $BMO$, and (consequently) the second 
derivatives need not even lie in the Lorentz space 
$L^{2,\infty}$ (see Appendices \ref{Hardy} and \ref{Lorentz} 
for definitions if necessary).

As a corollary of our theorem, we see that $f\in L^p$ implies that
$u$ lies in $C^{0,2(1-\frac{1}{p})}$, hence recovering a
result of Rupflin \cite{rupflin_calcvar}
in the case of two-dimensional domains. 
Rivi\`ere has informed us that our regularity assertion in
the particular case $f\equiv 0$ will also be made in 
the final version of \cite{riviere_notes}, based on a different
proof.

We remark that the estimate \eqref{w2pest} fails without
the smallness of $\Omega$ hypothesis.
More precisely, there exist a sequence 
$\Omega_k\in L^2(B_1,so(m)\otimes \mathbb{R}^2)$ uniformly bounded
in $L^2$, and a sequence of weak solutions 
$u_k\in W^{1,2}(B_1,\mathbb{R}^m)$ to the equation
$$-\Delta u_k = \Omega_k.\nabla u_k,$$
uniformly bounded in $W^{1,2}$, such that $u_k$ is unbounded
in any $W^{2,p}$ space with $p\in (1,2)$.
(A sequence of harmonic maps undergoing bubbling would
provide an example.)

Estimate \eqref{w2pest} implies that for any 
sequence $\Omega_k\in L^2(B_1,so(m)\otimes \mathbb{R}^2)$
with $\|\Omega_k\|_{L^2(B_1)}\leq\eta_0$, and any sequence
of weak solutions $u_k\in W^{1,2}(B_1,\mathbb{R}^m)$ 
to the equation
$$-\Delta u_k = \Omega_k.\nabla u_k+f_k,$$
with $u_k$ uniformly bounded in $W^{1,2}$ and $f_k$ uniformly bounded in some space $L^p$ for $p\in (1,2)$, we may deduce that
$u_k$ is locally uniformly bounded in $W^{2,p}$.
By the theorem of Rellich-Kondrachov, we can deduce that $u_k$
is precompact in $W^{1,t}(B_{1/2})$ for any $t < \frac{2p}{2-p}$.

In this paper, we work somewhat harder to prove a stronger
compactness result, extending a recent theorem of 
Li and Zhu \cite{LZ}, in which we assume merely that the inhomogeneous
terms $f_k$ are bounded in $L\ln L$ (a space larger than any of the $L^p$ spaces with $p>1$, but slightly smaller than $L^1$;
see Appendix \ref{Lorentz} for more information on this space, 
and definitions, if necessary).

\begin{theorem}[Compactness]\label{imp}
Suppose that 
we have a sequence $\lbrace u_n \rbrace  \subset W^{1,2}(B_1,\mathbb{R}^m)$ of weak solutions to 
\begin{equation*}
 -\Delta u_n = \Omega _n .\nabla u_n + f_n
\end{equation*}
on the unit disc in $\mathbb{R}^2$, 
where $\{\Omega_n\} \subset  L^2(B_1,so(m)\otimes \mathbb{R}^2)$ and $\{ f_n \} \subset L\ln L(B_1,\mathbb{R}^m)$. 
Suppose also that there exists $\Lambda < \infty$ such that 
\begin{equation*}
 \|u_n \|_{L^1(B_1)} + \|f_n\|_{L\ln L(B_1)} \leq \Lambda.
\end{equation*}
Then there exist an $\eta_2 = \eta_2 (m)>0 $ and $u\in W_{loc}^{1,2}(B_1,\mathbb{R}^m)$ such that if $\|\Omega_n\|_{L^2(B_1)} \leq \eta_2$ then after passing to a subsequence
\begin{equation*}
 \lim_{n\rightarrow \infty} \|u_n - u \|_{W^{1,2}(B_{1/2})}=0.
\end{equation*}
\end{theorem}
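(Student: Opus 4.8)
The plan is to run the argument in three phases: (i) upgrade the $L^1$ bound on $\{u_n\}$ to a uniform local $W^{1,2}$ bound; (ii) extract a subsequence converging weakly in $W^{1,2}_{loc}$ to a limit $u$ solving a limiting equation; and (iii) promote this weak convergence to strong convergence in $W^{1,2}(B_{1/2})$. For phase (i), after a covering and rescaling it suffices to work on a fixed ball, say $B_{3/4}$. The inhomogeneity is controlled because a bounded subset of $L\ln L(B_1)$ has Newtonian potentials with gradients bounded in $L^2_{loc}$ — this is the substitute, in this borderline Orlicz class, for the $W^{2,p}$-gain of Theorem \ref{optimal_regularity}. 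Feeding this into Rivi\`ere's conservation-law reformulation of the equation and absorbing the $\Omega_n\cdot\nabla u_n$ term using $\|\Omega_n\|_{L^2}\le\eta_2$ yields $\|\nabla u_n\|_{L^2(B_{3/4})}\le C(m)\Lambda$, after passing from the $L^1$ norm of $u_n$ to its $L^2$ norm in the usual way by interpolating against the gradient bound on dyadic annuli.

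For phase (ii), pass to a subsequence with $u_n\rightharpoonup u$ in $W^{1,2}_{loc}$ and $u_n\to u$ in $L^2_{loc}$ and a.e. by Rellich--Kondrachov, $\Omega_n\rightharpoonup\Omega$ in $L^2(B_1)$ with $\|\Omega\|_{L^2}\le\eta_2$ by lower semicontinuity, and — since a bounded subset of $L\ln L$ is equi-integrable (de la Vall\'ee-Poussin), hence weakly precompact in $L^1$ (Dunford--Pettis) — $f_n\rightharpoonup f_\infty$ weakly in $L^1$ with $f_\infty\in L\ln L$. That $u$ solves $-\Delta u=\Omega\cdot\nabla u+f_\infty$ then follows by re-expressing $\Omega_n\cdot\nabla u_n$ through the conservation law, whose Jacobian part passes to the limit by the div--curl lemma — equivalently, the Coifman--Lions--Meyer--Semmes Hardy-space estimate paired against $VMO$, using $W^{1,2}(B_1)\subset VMO$ in two dimensions.

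For phase (iii) set $w_n:=u_n-u\rightharpoonup 0$ in $W^{1,2}_{loc}$, so that $-\Delta w_n=\Omega_n\cdot\nabla w_n+g_n$ with $g_n:=(\Omega_n-\Omega)\cdot\nabla u+(f_n-f_\infty)$. One checks that $g_n\rightharpoonup 0$ weakly in $L^1$ and that $\{g_n\}$ is equi-integrable; since the inclusion $L^1(B_1)\hookrightarrow W^{-1,s}(B_1)$ is compact for every $s\in(1,2)$ in two dimensions, $g_n\to 0$ strongly in $W^{-1,s}_{loc}$. Apply Rivi\`ere's gauge decomposition for the operator defined by $\Omega_n$ (permissible since $\|\Omega_n\|_{L^2}\le\eta_2$): there are $A_n\in W^{1,2}\cap L^\infty(B_1,GL_m)$, uniformly elliptic with $A_n\to A$ strongly in every $L^q$ with $q<\infty$ and a.e., and $B_n\in W^{1,2}(B_1)$, with $\|A_n\|_{W^{1,2}}+\|A_n^{-1}\|_{W^{1,2}}+\|B_n\|_{W^{1,2}}\le C\eta_2$, such that $\mathrm{div}(A_n\nabla w_n)=\nabla^\perp B_n\cdot\nabla w_n+A_ng_n$. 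Hodge-decompose $A_n\nabla w_n=\nabla\alpha_n+\nabla^\perp\beta_n$ on $B_{1/2}$ with $\beta_n\in W^{1,2}_0$, so that $\Delta\beta_n=\nabla^\perp A_n\cdot\nabla w_n$ and $\Delta\alpha_n=\nabla^\perp B_n\cdot\nabla w_n+A_ng_n$. Each Jacobian term lies in $\mathcal H^1$ with norm $\le C\eta_2\Lambda$, so $\nabla\alpha_n,\nabla\beta_n$ are bounded in $L^{2,1}(B_{1/2})$, while $\nabla\Delta^{-1}(A_ng_n)$ is bounded in $L^2$ and tends to $0$ in every $L^q$ with $q<2$. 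Since $\nabla w_n=A_n^{-1}(\nabla\alpha_n+\nabla^\perp\beta_n)$ and $A_n^{-1}\to A^{-1}$ strongly off sets of small measure, the whole theorem reduces to showing that $\nabla\alpha_n,\nabla\beta_n\to 0$ strongly in $L^2(B_{1/2})$.

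That last step is the main obstacle, and it is where the smallness and the $L\ln L$ hypotheses do their real work. The mechanism I would use: the decay and energy estimates of the paper, combined with $\|\Omega_n\|_{L^2}\le\eta_2$ and the $L\ln L$-bound on $\{f_n\}$, give that $\{|\nabla u_n|^2\}$ (hence $\{|\nabla w_n|^2\}$) is equi-integrable on $B_{3/4}$ with \emph{no concentration} of energy on small balls, uniformly in $n$. Localising the Hardy-space estimates to small balls then returns \emph{small} $\mathcal H^1$-norms, so $\{\nabla\alpha_n\}$ and $\{\nabla\beta_n\}$ are equi-integrable in $L^2$; together with $\nabla^\perp A_n\cdot\nabla w_n,\ \nabla^\perp B_n\cdot\nabla w_n\to 0$ in $\mathcal D'$ (div--curl, using $w_n\rightharpoonup 0$ and $A_n,B_n$ bounded in $W^{1,2}$) and $A_ng_n\to 0$ in $W^{-1,s}$, this forces $\nabla\alpha_n,\nabla\beta_n\to 0$ in $L^2_{loc}$ — equi-integrability upgrading distributional convergence to strong convergence, in the spirit of Vitali's theorem after passing to a.e.-convergent subsequences of the gradients. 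A more hands-on alternative is to test the $w_n$-equation against $\varphi w_n$ for a cutoff $\varphi$: the $A_ng_n\cdot\varphi w_n$ term vanishes by the $L\ln L$--Trudinger--Moser duality (using $\|w_n\|_{\exp L}\le C$, the uniform integrability of $\{f_n\}$ in $L\ln L$, and $w_n\to 0$ in $L^2$), the lower-order terms vanish by strong $L^2$-convergence of $A_n\nabla u$ tested against $\nabla w_n\rightharpoonup 0$, and the critical Jacobian term $\int(\nabla^\perp B_n\cdot\nabla w_n)\varphi w_n$ is handled by a trilinear Wente estimate with constant $\le C\eta_2$, small enough to be absorbed on the left after a hole-filling iteration on shrinking balls. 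I expect the no-concentration route to give the cleanest write-up, since the absorption route requires carefully tracking the geometry of the iteration.
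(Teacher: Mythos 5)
Your overall architecture (uniform energy bound, weak limit, then a ``no concentration'' statement upgraded to strong $W^{1,2}$ convergence) is the right one and parallels the paper, but the two decisive steps are asserted rather than proved, and as stated the second one is false. First, the claim that the decay estimates ``give that $\{|\nabla u_n|^2\}$ is equi-integrable on $B_{3/4}$ with no concentration of energy on small balls, uniformly in $n$'' is precisely the heart of the theorem, and it does not follow from the decay estimate \eqref{est2} alone: rescaling \eqref{est2} to a ball $B_R(x_0)$ produces the term $\|f_n\|_{L^1(B_R)}\|f_n\|_{L\ln L(B_R)}$ on the right, and one must know that this tends to $0$ as $R\downarrow 0$ \emph{uniformly in $n$}. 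That is exactly Lemma \ref{l1-llogl}, i.e.\ $\|f_n\|_{L^1(B_R)}\le C[\ln(1/R)]^{-1}\|f_n\|_{L\ln L(B_R)}$, followed by an iterated limit ($n\to\infty$, then $r\downarrow 0$, then $R\downarrow 0$) to absorb the leftover $\tfrac12\|\nabla u_n\|^2_{L^2(B_R)}$. This logarithmic gain is the \emph{only} place where $L\ln L$ outperforms $h^1$ (for which the theorem fails, Section \ref{H1doesntwork}), so it cannot be treated as automatic; your write-up never invokes anything of this kind.

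Second, even granting no concentration, your mechanism for concluding --- ``equi-integrability upgrading distributional convergence to strong convergence, in the spirit of Vitali's theorem after passing to a.e.-convergent subsequences of the gradients'' --- has no foundation: weak convergence to $0$ in $L^2$ together with smallness on small balls does not exclude oscillation (think of $\sin(nx_1)$, whose square is uniformly non-concentrating yet whose $L^2$ norm does not vanish), and the a.e.\ convergence of the gradients that Vitali needs is exactly what is missing. In the paper this is supplied by the uniform $W^{2,1}$ bound of Theorem \ref{betest}, which places $\{\nabla u_n\}$ in a bounded subset of $BV$; the compact embedding $BV\hookrightarrow L^p$ for $p<2$ then gives convergence in measure, and the $BV\hookrightarrow L^2$ embedding forces the defect measure of $|\nabla u_n|^2$ to be purely atomic (Lemmata \ref{measure} and \ref{cmeasure}), so that ``no concentration on balls'' really does kill it. You never establish any second-order bound on $u_n$, so this step collapses. (Your alternative route of testing against $\varphi w_n$ has its own unaddressed problem: $\int f_n\varphi w_n\to 0$ does not follow from boundedness of $f_n$ in $L\ln L$ paired with boundedness of $w_n$ in $\exp L$, since bounded sets of $L\ln L$ need not be equi-integrable in the $L\ln L$ norm and the Moser--Trudinger embedding is not compact.) To repair the proof, insert the $W^{2,1}$ estimate of Theorem \ref{betest} at the start of phase (i) and replace your Vitali argument by the $BV$ concentration-compactness lemma; note also that identifying the limit equation (your phase (ii)) is not needed.
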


We will show in Section \ref{H1doesntwork} 
that this result fails if we replace
$L\ln L$ by the related Hardy space $h^1$ 
(see Appendix \ref{Hardy}).
In the special case that $\{\Omega_n\}$ is a precompact set in
$L^1$, and $u_n$ is uniformly bounded in $W^{1,2}$, 
this result was proved recently by Li and Zhu \cite{LZ}.

\begin{remark}
\label{W21rmk}
The compactness result is ruling out concentration of energy as is done in \cite{LZ} - i.e. concentration of $\|\nabla u_n\|_{L^2}^2$. In contrast, we do not rule out concentration of $\|\nabla u_n\|_{L^{2,1}}^2$ or of the corresponding second order quantity
$\|\nabla^2 u_n\|_{L^1}$. However, it will follow from our estimates
(and in particular, \eqref{int2} below) that if these latter
concentrations occur we must have $f_n$ concentrating
in $L\ln L$. 
\end{remark}

Even in the classical case that $\Omega\equiv 0$
there is a consequence of such compactness which may be 
worth remarking,
although one which would follow from previously known theory.

\begin{cor}
\label{compact_embedding}
On the ball in $\mathbb{R}^2$, the embedding 
$$L\ln L(B_1)\hookrightarrow H^{-1}(B_1)$$
is compact.
\end{cor}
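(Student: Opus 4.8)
The plan is to deduce this from the compactness Theorem~\ref{imp} in the special case $\Omega_n\equiv 0$. Fix an arbitrary sequence $\{f_n\}$ with $\|f_n\|_{L\ln L(B_1)}\le\Lambda<\infty$; since the embedding $L\ln L(B_1)\hookrightarrow H^{-1}(B_1)$ is continuous (being dual to the Trudinger embedding $W^{1,2}_0(B_1)\hookrightarrow\exp L^2(B_1)$; see Appendix~\ref{Lorentz}), it is enough to extract a subsequence that converges in $H^{-1}(B_1)$. I would pass to potentials: let $\tilde f_n$ be the extension of $f_n$ by zero, regarded as an element of $L\ln L(B_2)$, and let $v_n\in W^{1,2}_0(B_2)$ be the weak solution of $-\Delta v_n=\tilde f_n$ on $B_2$, so that $\|v_n\|_{W^{1,2}_0(B_2)}\le C\|\tilde f_n\|_{H^{-1}(B_2)}\le C\Lambda$ and in particular $\|v_n\|_{L^1(B_2)}\le C\Lambda$. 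Since $-\Delta$ is an isomorphism $W^{1,2}_0(B_2)\to H^{-1}(B_2)$, and the restriction map $H^{-1}(B_2)\to H^{-1}(B_1)$ (the adjoint of extension by zero $W^{1,2}_0(B_1)\hookrightarrow W^{1,2}_0(B_2)$) is bounded and sends $\tilde f_n$ to $f_n$, it now suffices to show that $\{v_n\}$ has a subsequence converging in $W^{1,2}_0(B_2)$.

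The difficulty is that Theorem~\ref{imp} only produces convergence on the half-disc, whereas here we need convergence of $v_n$ right up to $\partial B_2$. I would circumvent this by reflection: because $\tilde f_n$ vanishes on $B_2\setminus\overline{B_1}$, the function $v_n$ is harmonic near $\partial B_2$ and vanishes there, so the Schwarz reflection principle across the circle $\partial B_2$ extends it to $V_n\in W^{1,2}(B_4)$, with $V_n(x)=v_n(x)$ for $|x|<2$ and $V_n(x)=-v_n(4x/|x|^2)$ for $2\le|x|<4$, solving $-\Delta V_n=\tilde f_n$ on all of $B_4$ (with $\tilde f_n$ extended by zero once more) and with $\|V_n\|_{L^1(B_4)}\le C\|v_n\|_{L^1(B_2)}\le C\Lambda$ by the change of variables formula. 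Rescaling $B_4$ to $B_1$ via $W_n(x):=V_n(4x)$ gives $-\Delta W_n=H_n$ on $B_1$ with $H_n(x)=16\,\tilde f_n(4x)$, where $\|H_n\|_{L\ln L(B_1)}+\|W_n\|_{L^1(B_1)}\le C\Lambda$. Now Theorem~\ref{imp} applies with $\Omega_n\equiv 0$ (for which $\|\Omega_n\|_{L^2(B_1)}\le\eta_2$ holds trivially), yielding a subsequence along which $W_n$ converges in $W^{1,2}(B_{1/2})$; unrescaling, $V_n$ converges in $W^{1,2}(B_2)$, hence so does $v_n=V_n|_{B_2}$, with limit in the closed subspace $W^{1,2}_0(B_2)$. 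Tracing back through the first paragraph produces the desired convergent subsequence of $\{f_n\}$ in $H^{-1}(B_1)$.

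The only substantial input is Theorem~\ref{imp}; everything else is bookkeeping, and, as remarked in the text, the corollary also follows from classical linear elliptic theory. The step I would be most careful about is the use of Schwarz reflection across the \emph{circle} $\partial B_2$: it is legitimate precisely because $v_n$ vanishes on, and is harmonic in a neighbourhood of, $\partial B_2$, so that $V_n$ genuinely solves $-\Delta V_n=\tilde f_n$ on $B_4$ with no spurious distributional mass on $\partial B_2$. (An alternative route, avoiding reflection, is to combine the interior convergence from Theorem~\ref{imp} with a direct bound on the Dirichlet energy of the harmonic functions $v_n$ on the annulus $B_2\setminus\overline{B_{3/2}}$ in terms of the $H^{1/2}(\partial B_{3/2})$-norms of their traces, which converge.)
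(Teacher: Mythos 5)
Your argument is correct, and its core is the same as the paper's: extend $f_n$ by zero, solve $-\Delta v_n=\tilde f_n$ on $B_2$ with $v_n\in W^{1,2}_0(B_2)$, and invoke the compactness Theorem~\ref{imp} with $\Omega_n\equiv 0$. The difference is in how you get from the convergence of the potentials to convergence of $f_n$ in $H^{-1}(B_1)$. You insist on convergence of $v_n$ in $W^{1,2}_0(B_2)$ up to $\partial B_2$ (so that $-\Delta$ can be applied as an isomorphism onto $H^{-1}(B_2)$ before restricting), and you manufacture this by Kelvin/Schwarz reflection across $\partial B_2$ followed by a rescaling of $B_4$ to the unit disc. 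That step is legitimate as written --- $v_n$ is harmonic in the annulus $1<|x|<2$ and vanishes on $\partial B_2$, so the odd inversion is harmonic across the circle and introduces no distributional mass there --- but it is unnecessary. The paper's observation is that $\|f_n-f\|_{H^{-1}(B_1)}$ is a supremum of $\int(\nabla v_n-\nabla v)\cdot\nabla\phi$ over $\phi\in W^{1,2}_0(B_1)$ with unit norm, and such $\phi$ only see the gradients on $B_1$; hence $\|f_n-f\|_{H^{-1}(B_1)}\leq\|\nabla v_n-\nabla v\|_{L^2(B_1)}$, and the \emph{interior} convergence on $B_1\subset\subset B_2$ that Theorem~\ref{imp} delivers directly (after the obvious rescaling) already closes the argument, with the limit $f:=-\Delta v$ read off as an element of $H^{-1}(B_1)$. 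So your reflection buys nothing except convergence near $\partial B_2$, which is never tested. A minor further difference: you justify continuity of the embedding via the Trudinger/Orlicz duality $W^{1,2}_0\hookrightarrow\exp L^2$ against $L\ln L$, whereas the paper pairs $f-\bar f\in\mathcal{H}^1(\R^2)$ against $\phi\in BMO$ using the $\mathcal{H}^1$--$BMO$ duality already set up in its appendix; both are valid, but the latter uses only tools stated in the paper.
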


At the heart of this paper is a collection of energy/decay
estimates which we summarise in the following theorem.

\begin{theorem}[Main supporting theorem]\label{mesest}
 Suppose $u\in W^{1,2}(B_1,\mathbb{R}^m)$ is a weak solution to 
\begin{equation}\label{eqn3}
-\Delta u = \Omega.\nabla u + f, \,\,\,\,\,\,\,\,\,\,\, f \in L \ln L(B_1,\mathbb{R}^m) 
\end{equation}
on the unit disc in $\mathbb{R}^2$, where $\Omega \in L^2(B_1,so(m)\otimes \mathbb{R}^2)$.
Writing $\bar u = \frac{1}{|B_1|}\int_{B_1}u$,
\begin{enumerate}
\item
there exist $\eta = \eta (m) > 0$ and $K_1=K_1(m)<\infty$
such that if $\|\Omega\|_{L^2(B_1)} \leq \eta$, then for all $r\in (0,1/2]$ we have
\begin{equation}
\label{int1}
 \|\nabla u \|_{L^2(B_r)}^2 \leq  K_1\left( \|\Omega\|_{L^2(B_1)}^2 \|\nabla u\|_{L^2(B_1)}^2 + r^2 \|u-\bar u\|_{L^1(B_1)}^2 + \|f\|_{L^1(B_1)} \|f\|_{L\ln L(B_1)}\right)
\end{equation}
and 
\begin{equation}
\label{int2}
 \|\nabla^2 u \|_{L^1(B_r)} \leq K_1\left(\|\Omega \|_{L^2(B_1)} \|\nabla u \|_{L^2(B_1)} + r^2\|u- \bar u\|_{L^1(B_1)} + \|f\|_{L\ln L(B_1)}\right);
\end{equation}
\item
for all $\delta>0$ there 
exist $\eta = \eta (m,\delta) > 0$ and $K_2=K_2(m,\delta)<\infty$
such that if $\|\Omega\|_{L^2(B_1)} \leq \eta$, then for all 
$r\in (0,1]$ we have
\begin{eqnarray}
\label{est2}
\|\nabla u \|_{L^2(B_r)}^2 &\leq&  
(1+\delta)r^2 \|\nabla u\|_{L^2(B_1)}^2 + \nonumber \\ 
&+& K_2\left( \|\Omega\|_{L^2(B_1)}^2 \|\nabla u\|_{L^2(B_1)}^2 +  \|f\|_{L^1(B_1)} \|f\|_{L\ln L(B_1)}\right).
\end{eqnarray}
\end{enumerate}
\end{theorem}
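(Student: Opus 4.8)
The plan is to exploit Rivière's conservation-law structure: the antisymmetry of $\Omega$ allows one to write $\Omega = A^{-1}\nabla A + \text{curl}\,B$ for a gauge $A\in W^{1,2}\cap L^\infty$ (close to the identity when $\|\Omega\|_{L^2}$ is small) and $B\in W^{1,2}$, so that the equation $-\Delta u = \Omega.\nabla u + f$ can be rewritten in divergence form as $\mathrm{div}(A\nabla u + (\nabla^\perp B) u) = Af - \text{(controllable terms)}$. Introducing the Hodge decomposition on $B_1$ one obtains $A\nabla u = \nabla \alpha + \nabla^\perp \beta$ (roughly), where $\alpha$ solves a Poisson equation with right-hand side involving $f$ and a Jacobian-type term $\nabla^\perp B\cdot\nabla u$, and $\beta$ solves an equation whose right-hand side is a Jacobian $\nabla A\cdot\nabla^\perp u$. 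The Jacobian terms live in the Hardy space $h^1$ by the Coifman--Lions--Meyer--Semmes theorem, hence $\nabla^2(\text{those parts})\in L^1$ with norm controlled by $\|\nabla A\|_{L^2}\|\nabla u\|_{L^2}\lesssim\|\Omega\|_{L^2}\|\nabla u\|_{L^2}$; the inhomogeneity contributes via the sharp estimate that if $\Delta g = f$ with $f\in L\ln L$ then $\nabla^2 g\in L^1$ with $\|\nabla^2 g\|_{L^1}\lesssim \|f\|_{L\ln L}$ (Stein's theorem, or equivalently $L\ln L\hookrightarrow h^1$ on bounded domains after subtracting the mean). This already yields the $\nabla^2 u$ bound \eqref{int2} on a fixed smaller ball, modulo a harmonic remainder which we handle next.

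The decay in $r$ comes from separating off the harmonic part. After the gauge change, $u$ differs from a harmonic function $h$ on $B_{1/2}$ by a term whose $W^{2,1}$ norm is bounded by the right-hand side quantities above; for the harmonic part one uses the classical mean-value/gradient estimates, namely $\|\nabla h\|_{L^2(B_r)}^2\le C r^2\|\nabla h\|_{L^2(B_{1/2})}^2$ and, more delicately for \eqref{est2}, the fact that the optimal constant in $\|\nabla h\|_{L^2(B_r)}^2 \le C(\rho) r^2 \|\nabla h\|_{L^2(B_\rho)}^2$ can be pushed to $(1+\delta)$ by taking $\rho$ close enough to $1$ (since a harmonic function's gradient energy on $B_r$ relative to $B_\rho$ scales like $(r/\rho)^2$ to leading order). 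Feeding back the $W^{2,1}\hookrightarrow L^\infty\hookrightarrow L^2$-type control of the non-harmonic correction, and absorbing the term $r^2\|u-\bar u\|_{L^1}$ which arises from estimating the harmonic function's gradient on a small ball by its $L^1$-oscillation (interior estimates for harmonic functions, $\|\nabla h\|_{L^\infty(B_{1/2})}\lesssim \|h-\bar h\|_{L^1(B_1)}$), gives \eqref{int1}. The passage from $L^1\times L\ln L$ products for the $f$-contribution to \eqref{int1} versus the single $\|f\|_{L\ln L}$ in \eqref{int2} reflects that $\nabla^2(\Delta^{-1}f)\in L^1$ scales linearly in $\|f\|_{L\ln L}$, whereas squaring to get an $L^2$-gradient estimate produces the mixed product $\|f\|_{L^1}\|f\|_{L\ln L}$ via the embedding $\nabla(\Delta^{-1}f)\in L^{2,1}$ paired against $L^{2,\infty}$, or directly by interpolating the $L^1$ second-derivative bound.

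For part (2), i.e.\ the scale-invariant estimate \eqref{est2} on all of $r\in(0,1]$ rather than $r\le 1/2$, the key point is that one cannot simply iterate; instead one runs the above decomposition on $B_\rho$ for $\rho$ chosen depending only on $\delta$, tracks that the error terms on $B_\rho$ are still controlled by the global quantities on $B_1$ (here the smallness of $\|\Omega\|_{L^2}$, now allowed to depend on $\delta$, is used to absorb the gauge-theoretic errors), and then rescales. The dependence $\eta=\eta(m,\delta)$ enters precisely because the non-harmonic correction must be made small compared to $\delta \|\nabla u\|_{L^2}^2$, which forces $\|\Omega\|_{L^2}$ small in terms of $\delta$.

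The main obstacle I anticipate is the careful bookkeeping of the gauge decomposition together with the Hodge decomposition on the disc: one must ensure the constants in Rivière's construction of $(A,B)$ depend only on $m$ (for part (1)) and that boundary terms from the Hodge decomposition on $B_1$ (as opposed to $\R^2$) do not destroy the Hardy-space estimates — typically handled by extending or by using the local $h^1$ theory — and, most subtly, extracting the sharp $(1+\delta)$ constant in \eqref{est2} rather than a generic $C$, which requires the decay estimate for harmonic functions to be used in its precise leading-order form and the error terms to be genuinely lower-order (quadratically small in $\|\Omega\|_{L^2}$, and controlled by $\|f\|_{L^1}\|f\|_{L\ln L}$ which is a genuine product and hence "small times small").
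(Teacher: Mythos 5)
Your proposal follows essentially the same route as the paper: Rivi\`ere's gauge $\nabla A - A\Omega = \nabla^\perp B$, a Hodge-type decomposition of $A\nabla u$ into Newtonian potentials of the Jacobian terms (handled by Coifman--Lions--Meyer--Semmes and the boundedness of $\nabla N:\mathcal{H}^1\to L^{2,1}$) plus a potential of $Af$ (handled by $L\ln L\hookrightarrow h^1$ and the $L^{2,1}$--$L^{2,\infty}$ duality, which is exactly where the product $\|f\|_{L^1}\|f\|_{L\ln L}$ arises) plus a harmonic remainder, whose interior $L^1\to L^\infty$ estimate produces the $r^2\|u-\bar u\|_{L^1}^2$ term and whose exact decay $\|H\|_{L^2(B_r)}^2\le r^2\|H\|_{L^2(B_1)}^2$ (so no shrinking to $B_\rho$ is needed) combined with the $(1+\delta)$-near-isometry of $A$ gives part (2). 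You correctly identify all the key ingredients and where each term on the right-hand side comes from.
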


Although we will not need it in this work, we note that 
the first part of the theorem will also yield estimates for
$\nabla u$ in the Lorentz space $L^{2,1}$ (by the embedding $W^{1,1}\hookrightarrow L^{2,1}$). 

The estimates of the first part of the theorem are
interior estimates which have the weakest norms of $u$ on the 
right-hand side.
By combining them with a standard covering argument, we will
also derive the following optimal global estimate:

\begin{theorem}\label{betest}
With $u$ and $f$ as in Theorem \ref{mesest} and 
$U\subset \subset B_1$ there exist an $\eta_1 =\eta_1(m)>0$ and $C=C(m, U) <\infty$  such that if $\|\Omega\|_{L^2(B_1)} \leq \eta_1$, then
\begin{equation*}
 \|u\|_{W^{2,1}(U)} \leq C \left( \|u\|_{L^1(B_1)} + \|f\|_{L\ln L(B_1)} \right).
\end{equation*}
 
\end{theorem}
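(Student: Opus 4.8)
The plan is to derive Theorem \ref{betest} from the interior estimates of the first part of Theorem \ref{mesest} by a covering argument, exactly as announced. Fix $U \subset\subset B_1$. First I would choose $\rho \in (0,1)$ with $U \subset\subset B_\rho$, and for each $x \in U$ consider the rescaled disc $B_{1-\rho}(x) \subset B_1$. On each such disc the function $v(y) := u(x + (1-\rho)y)$ is a weak solution on $B_1$ of an equation of the same form \eqref{eqn3}, with $\tilde\Omega(y) = (1-\rho)\Omega(x+(1-\rho)y)$ and $\tilde f(y) = (1-\rho)^2 f(x+(1-\rho)y)$; crucially, the $L^2$ norm of $\Omega$ is scale-invariant in two dimensions, so $\|\tilde\Omega\|_{L^2(B_1)} = \|\Omega\|_{L^2(B_{1-\rho}(x))} \le \|\Omega\|_{L^2(B_1)} \le \eta_1$, and it suffices to take $\eta_1 = \eta_1(m)$ to be the threshold $\eta(m)$ from part (1) of Theorem \ref{mesest}. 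Applying \eqref{int2} to $v$ on $B_r$ with a fixed $r$ (say $r = 1/2$) and undoing the scaling gives a bound for $\|\nabla^2 u\|_{L^1(B_{(1-\rho)/2}(x))}$ in terms of $\|\Omega\|_{L^2(B_{1-\rho}(x))}\|\nabla u\|_{L^2(B_{1-\rho}(x))}$, $\|u - c_x\|_{L^1(B_{1-\rho}(x))}$ for the relevant mean $c_x$, and $\|f\|_{L\ln L(B_{1-\rho}(x))}$, with constants depending only on $m$ and $\rho$ (hence on $U$). Here one uses that the $L\ln L$ norm also behaves well under this scaling on a fixed-size domain.

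Next I would clean up the right-hand side. The gradient term $\|\Omega\|_{L^2}\|\nabla u\|_{L^2}$ on the balls $B_{1-\rho}(x)$ is uniformly controlled once we know $\|\nabla u\|_{L^2(B_\rho')}$ is finite for some slightly larger $\rho'$ — and this is exactly what \eqref{int1} (applied in the same rescaled way) provides, bounding $\|\nabla u\|_{L^2}$ over interior discs by $\|u - \bar u\|_{L^1(B_1)} + (\|f\|_{L^1}\|f\|_{L\ln L})^{1/2} + \|\Omega\|_{L^2}\|\nabla u\|_{L^2}$, and the last term can be absorbed into the left when $\eta_1$ is small. The term $\|u - c_x\|_{L^1}$ is harmless: $\|u - c_x\|_{L^1(B_{1-\rho}(x))} \le 2\|u - \bar u\|_{L^1(B_1)} \le C\|u\|_{L^1(B_1)} + C\|u - \bar u\|_{L^1}$, and in any case it is bounded by $C(U)\|u\|_{L^1(B_1)}$ after noting $|c_x| \le C\|u\|_{L^1(B_1)}/(1-\rho)^2$. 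Finally, the $L\ln L$ term over subdiscs is dominated by $\|f\|_{L\ln L(B_1)}$ (up to a harmless constant — $L\ln L$ is not quite subadditive over a partition, but on a bounded family of discs one gets a bound with a dimensional constant, or one simply uses monotonicity of the Luxemburg norm under inclusion of domains). Since $\{B_{(1-\rho)/2}(x) : x \in U\}$ is an open cover of the compact set $\overline U$, extract a finite subcover $B_{(1-\rho)/2}(x_1), \dots, B_{(1-\rho)/2}(x_N)$ with $N = N(U)$; summing the local estimates over the cover and using $\|u\|_{L^1(B_r(x_i))} + \|\nabla u\|_{L^1} \le \|u\|_{W^{1,1}(U')}$ together with the already-established $W^{1,1}$ (indeed $W^{1,2}_{loc}$) bound yields
\begin{equation*}
\|u\|_{W^{2,1}(U)} \le \sum_{i=1}^N \|u\|_{W^{2,1}(B_{(1-\rho)/2}(x_i))} \le C(m,U)\bigl(\|u\|_{L^1(B_1)} + \|f\|_{L\ln L(B_1)}\bigr),
\end{equation*}
which is the claim.

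I expect the only genuinely delicate point to be the bookkeeping around the $L\ln L$ norm under the covering and rescaling: unlike an $L^p$ norm, the Luxemburg norm of $f$ on a union of overlapping discs is not simply the sum of the pieces, so I would either invoke monotonicity $\|f\|_{L\ln L(B')} \le C\|f\|_{L\ln L(B_1)}$ for $B' \subset B_1$ (with a constant absorbing the normalization of the Luxemburg gauge on discs of comparable size, as discussed in Appendix \ref{Lorentz}), or pass through the equivalent "$\int |f|\log(2 + |f|/\|f\|_1)$" expression which is genuinely subadditive in the integrand. Everything else — the scaling invariance of $\|\Omega\|_{L^2}$ in dimension two, the absorption of the $\|\Omega\|_{L^2}\|\nabla u\|_{L^2}$ term using smallness of $\eta_1$, and the replacement of $\|u - \bar u\|_{L^1}$-type quantities by $\|u\|_{L^1(B_1)}$ — is routine. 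One should also double-check that $W^{2,1}$ control plus $W^{1,2}_{loc}$ (from Theorem \ref{optimal_regularity}, or directly from \eqref{int1}) is enough to make sense of all the terms, but this is automatic since $p \in (1,2)$ regularity already gives $u \in W^{2,1}_{loc}$ and we are merely quantifying the estimate.
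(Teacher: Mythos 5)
There is a genuine gap at the step where you claim the term $\|\Omega\|_{L^2}\|\nabla u\|_{L^2}$ ``can be absorbed into the left when $\eta_1$ is small.'' In estimate \eqref{int1} the gradient term on the right-hand side is $\|\Omega\|_{L^2(B_1)}^2\|\nabla u\|_{L^2(B_1)}^2$, i.e.\ the Dirichlet energy over the \emph{full} ball, whereas the left-hand side is $\|\nabla u\|_{L^2(B_r)}^2$ only for $r\le 1/2$ (and in \eqref{est2} the coefficient of the full-ball energy is $(1+\delta)r^2$, which is close to $1$ precisely when $r$ is close to $1$). So no choice of $\eta_1$ lets you absorb a quantity on $B_1$ into a quantity on a strictly smaller ball, and a single finite covering of an interior disc does not help either: summing the local estimates still leaves a multiple of $\|\nabla u\|_{L^2(B_1)}^2$ on the right, which is finite but is \emph{not} controlled by $\|u\|_{L^1(B_1)}+\|f\|_{L\ln L(B_1)}$ -- and eliminating it is the entire content of the intermediate bound $\|\nabla u\|_{L^2(B_{1/2})}\le C(\|u\|_{L^1(B_1)}+\|f\|_{L\ln L(B_1)})$ that your argument needs. (Appealing to Theorem \ref{optimal_regularity} or to an ``already-established $W^{1,2}_{loc}$ bound'' would be both circular and only qualitative.)

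The missing ingredient is the absorption/iteration lemma of Leon Simon (Lemma \ref{LS}): one applies the rescaled \eqref{int1} on \emph{every} ball $B_R(x_0)$ with $B_{2R}(x_0)\subset B_1$, obtaining $\varphi(B_{R/2}(x_0))\le\epsilon_0\,\varphi(B_R(x_0))+\Gamma R^{-4}$ for the subadditive set function $\varphi(B)=\|\nabla u\|_{L^2(B)}^2$ and $\Gamma=C(\|u\|_{L^1(B_1)}^2+\|f\|_{L\ln L(B_1)}^2)$; the lemma's multi-scale iteration then yields $\|\nabla u\|_{L^2(B_{1/2})}^2\le C\Gamma$, provided $K_1\eta_1^2\le\epsilon_0(4)$ -- this is exactly why the paper sets $\eta_1^2=\min\{\eta^2,\epsilon_0/K_1\}$. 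Once this $W^{1,2}$ bound is in hand, the remainder of your argument (feeding it into \eqref{int2} with $r=1/2$, handling the $L\ln L$ norm via Lemma \ref{fhat}, and the finite covering of $\overline U$) is correct and coincides with the paper's proof.
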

 
The second part of Theorem \ref{mesest} is used to obtain 
both the regularity result Theorem \ref{optimal_regularity} and 
the compactness result Theorem \ref{imp}.

We remark that Theorems \ref{optimal_regularity}, \ref{imp}, \ref{mesest} and \ref{betest} all fail if 
we drop the antisymmetry hypothesis on $\Omega$. 

The paper is laid out as follows.
In Section \ref{proof_mesest} we prove the main supporting Theorem \ref{mesest}, which is central to the other results, then we go on to prove Theorem \ref{betest} in Section \ref{proof_betest}. This allows us to prove the compactness Theorem \ref{imp} in Section \ref{proof_imp} and its corollary in Section \ref{proof_corollary}. We leave the regularity Theorem \ref{optimal_regularity} to Section \ref{proof_optimal_regularity} and finally in Section \ref{H1doesntwork} we give an example to show that the compactness result fails if we replace $L\ln L$ by the Hardy space $h^1$. 

\emph{Acknowledgments:} Both authors were supported by 
The Leverhulme Trust. The second author would like to thank
Pawe\l{} Strzelecki for useful discussions.

\section{Preliminaries}

To begin, we describe some properties of the space $L\ln L$,
and record the behaviour of the equations \eqref{eqn2}, \eqref{eqn3} and 
various norms under scaling.
In addition, we have collected a number of known results 
in an appendix.

\subsection{Estimates for $L\ln L$}
For the definition of $L\ln L$ and $f^*$, 
see Appendix \ref{Lorentz}.

\begin{lemma}\label{l1-llogl}
 Suppose $f \in L \ln L(B_r(x_0))$ and $r \in (0,1/2]$. Then there exists $C< \infty$ such that 
\begin{equation*}
 \|f\|_{L^1(B_r(x_0))} \leq C \left[\ln \left(\frac{1}{r} \right) \right]^{-1} \|f\|_{L \ln L(B_r(x_0))}.
\end{equation*}

\end{lemma}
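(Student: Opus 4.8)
The plan is to reduce the statement to the definition of the $L\ln L$ norm via its characterization in terms of the decreasing rearrangement $f^*$, exploiting the fact that on a small disc the measure of the domain is tiny, which is exactly what produces the gain of a factor $[\ln(1/r)]^{-1}$. Recall that, up to equivalent norms, $\|f\|_{L\ln L(B_r(x_0))}$ is comparable to $\int_0^{|B_r|} f^*(t)\,\ln\!\big(\tfrac{e|B_r|}{t}\big)\,dt$ (or, depending on the normalization chosen in the Appendix, $\int_0^{|B_r|} f^*(t)\,\ln\!\big(\tfrac{2|B_r|}{t}\big)\,dt$); either way the logarithmic weight inside the integral is of size $\ln(|B_r|/t)$ for $t$ ranging over $(0,|B_r|)$. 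Meanwhile $\|f\|_{L^1(B_r(x_0))}=\int_0^{|B_r|} f^*(t)\,dt$. So the content of the lemma is purely one about these two one-dimensional integrals of the nonnegative decreasing function $f^*$ on the interval $(0,|B_r|)$.

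First I would set $\rho:=|B_r(x_0)|=\pi r^2$ and observe that $r\le 1/2$ forces $\rho\le \pi/4<1$, so that $\ln(1/\rho)>0$ and, more to the point, $\ln(1/r)=\tfrac12\ln(1/\rho)+\tfrac12\ln\pi$ is comparable to $\ln(e/\rho)$ for $r$ small; being careful near $r=1/2$ one checks $\ln(e/\rho)\ge c\,\ln(1/r)$ for a universal $c>0$ on the whole range $r\in(0,1/2]$. Next, the key pointwise inequality: for all $t\in(0,\rho)$ one has $\ln(e\rho/t)\ge \ln(e/\rho)$ precisely when $t\le \rho^2$, but that is not quite the split I want. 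Instead I would simply use monotonicity of the logarithmic weight differently: write
\[
\|f\|_{L^1(B_r)}=\int_0^{\rho} f^*(t)\,dt
=\int_0^{\rho} f^*(t)\,\frac{\ln(e\rho/t)}{\ln(e\rho/t)}\,dt,
\]
and note that on $(0,\rho)$ we have $\ln(e\rho/t)\ge 1$, which is too weak; the honest gain comes from restricting to where $f^*$ lives at scale of the domain. Concretely, split $\int_0^\rho = \int_0^{\rho^2}+\int_{\rho^2}^\rho$. On $(0,\rho^2)$ the weight satisfies $\ln(e\rho/t)\ge \ln(e/\rho)$, so that piece of the $L^1$ norm is bounded by $[\ln(e/\rho)]^{-1}\int_0^{\rho^2} f^*\ln(e\rho/t)\,dt\le [\ln(e/\rho)]^{-1}\|f\|_{L\ln L}$. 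On $(\rho^2,\rho)$ the weight is only bounded below by a constant, so I instead bound $f^*(t)\le f^*(\rho^2)$ there (monotonicity) and then $\int_{\rho^2}^\rho f^*(\rho^2)\,dt\le \rho f^*(\rho^2)$; since also $f^*(\rho^2)\,\rho^2\ln(1/\rho)\lesssim \int_0^{\rho^2} f^*(t)\ln(e\rho/t)\,dt\le\|f\|_{L\ln L}$ (using that $f^*$ is decreasing and the weight exceeds $\ln(e/\rho)\gtrsim\ln(1/\rho)$ on $(0,\rho^2)$), we get $\rho f^*(\rho^2)\lesssim \rho^{-1}[\ln(1/\rho)]^{-1}\|f\|_{L\ln L}$, and since $\rho^{-1}\ge 1$ this needs a touch more care — it is cleaner to bound $\int_{\rho^2}^\rho f^*(t)\,dt\le \int_{\rho^2}^\rho f^*(t)\,\tfrac{\ln(e\rho/t)}{\ln(e\rho/\rho)}\,dt$ which diverges; so in fact the right move on the outer piece is $\ln(e\rho/t)\ge\ln e=1$ is useless, and one should instead observe $\int_{\rho^2}^{\rho}f^*(t)\,dt \le [\ln(1/\rho)]^{-1}\int_{\rho^2}^{\rho} f^*(t)\ln(1/t)\,dt$ using $\ln(1/t)\ge\ln(1/\rho)$ on that range, and $\ln(1/t)\le\ln(e\rho/t)$ for $t\le\rho\le 1$, so this piece too is $\le [\ln(1/\rho)]^{-1}\|f\|_{L\ln L}$.

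Combining the two pieces gives $\|f\|_{L^1(B_r)}\le C[\ln(1/\rho)]^{-1}\|f\|_{L\ln L(B_r)}$, and then replacing $\ln(1/\rho)=\ln(1/(\pi r^2))$ by a constant multiple of $\ln(1/r)$ (valid for $r\in(0,1/2]$, with the constant absorbing the behaviour near $r=1/2$) yields the claimed inequality. The main obstacle — really the only subtlety — is pinning down the exact normalization of $\|f\|_{L\ln L}$ used in the Appendix and verifying that the elementary comparisons $\ln(1/\rho)\sim\ln(1/r)$ and $\ln(1/t)\le\ln(e\rho/t)$ hold uniformly on the stated ranges, rather than just asymptotically; once the definition is fixed the estimate is a two-line rearrangement argument. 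I would present it by quoting the $f^*$-characterization of the $L\ln L$ norm from the Appendix, performing the dyadic split at $t=\rho^2$, and collecting constants.
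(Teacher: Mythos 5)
The proposal founders on precisely the point you flag as ``the only subtlety'': the normalization of the $L\ln L$ norm. The paper's Appendix on Lorentz spaces defines $\|f\|_{L\ln L(B_r(x_0))}$ to be equivalent to $\int_0^{|B_r|} f^*(t)\ln\bigl(2+\tfrac{1}{t}\bigr)\,dt$, with a logarithmic weight anchored at scale $1$ that does \emph{not} see the size of the domain. You instead work with the domain-adapted weight $\ln(e|B_r|/t)$, and with that normalization the lemma is false: taking $f\equiv 1$ on $B_r$, so that $f^*\equiv 1$ on $(0,\rho)$ with $\rho=\pi r^2$, gives $\|f\|_{L^1(B_r)}=\rho$ while $\int_0^\rho\ln(e\rho/t)\,dt=2\rho$, so the ratio of the left- and right-hand sides of the claimed inequality is of order $\ln(1/r)$, which is unbounded as $r\downarrow 0$. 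Correspondingly your argument contains a step that cannot be repaired: the inequality $\ln(1/t)\le\ln(e\rho/t)$, invoked on the outer range $(\rho^2,\rho)$, is equivalent to $1+\ln\rho\ge 0$, i.e.\ $\rho\ge e^{-1}$, and fails for all small $r$. (The inner piece over $(0,\rho^2)$ is fine; the outer piece is exactly where the counterexample lives, so no split can save it.)

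With the paper's actual definition the lemma needs no splitting at all: for every $t\in(0,\pi r^2)$ one has $\ln\bigl(2+\tfrac{1}{t}\bigr)\ge\ln\bigl(\tfrac{1}{\pi r^2}\bigr)=2\ln\bigl(\tfrac{1}{r}\bigr)-\ln\pi\ge c\ln\bigl(\tfrac{1}{r}\bigr)$ uniformly for $r\in(0,1/2]$, whence $\ln\bigl(\tfrac{1}{r}\bigr)\int_0^{\pi r^2}f^*\le C\int_0^{\pi r^2}f^*(t)\ln\bigl(2+\tfrac{1}{t}\bigr)\,dt$. The paper packages the same observation by writing $\ln\bigl(2+\tfrac{r^2}{s}\bigr)=\ln r^2+\ln\bigl(\tfrac{2}{r^2}+\tfrac{1}{s}\bigr)\ge 0$ and bounding $\tfrac{2}{r^2}+\tfrac{1}{s}\le\bigl(2+\tfrac{1}{s}\bigr)^C$ for $s\le\pi r^2<1$. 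The moral is that the gain of $[\ln(1/r)]^{-1}$ comes precisely from the weight \emph{not} being adapted to the domain (compare Lemma~\ref{fhat}, which shows this norm is essentially scale-invariant); a proof built on a rescaled weight is addressing a different, and false, statement.
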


\begin{proof}
Notice that 
\begin{eqnarray*}
0 &\leq& r^2 \int_0^{|B_1|} f^*(r^2 t) \ln \left(2 + \frac{1}{t}\right) \,dt \\
&=&  \int_0^{|B_r(x_0)|} f^*(s) \ln \left(2 + \frac{r^2}{s}\right) \,ds \\
&=&  \int_0^{|B_r(x_0)|} f^*(s) \ln r^2 \, ds +   \int_0^{|B_r(x_0)|} f^*(s) \ln \left(\frac{2}{r^2} + \frac{1}{s}\right) \, ds \\
&\leq& -2\ln \left(\frac{1}{r}\right) \|f\|_{L^1(B_r(x_0))} + C\|f\|_{L \ln L(B_r(x_0))},
\end{eqnarray*}
where the final inequality is obtained by noticing that $s\leq \pi r^2 <1$ which implies $\frac{2}{r^2} + \frac{1}{s} \leq \frac{2\pi +1}{s} \leq \left(2 + \frac{1}{s} \right)^C$ for some fixed $C$.  
\end{proof}

The following lemma indicates that $L\ln L$ norms do not 
deteriorate under scaling. However we emphasise that they 
need not improve, unlike $L^p$ norms for $p>1$.

\begin{lemma}\label{fhat}
  Suppose $f \in L \ln L(B_r(x_0))$ where $r \in (0,1/2]$. 
  Defining $\hat{f}:= r^2 f(x_0 + rx)$ there exists $C < \infty$ such that
\begin{equation*}
 \|\hat{f}\|_{L \ln L(B_1)} \leq C\|f\|_{L \ln L(B_r(x_0))}.
\end{equation*}
\end{lemma}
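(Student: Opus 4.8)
The plan is to reduce everything to how the decreasing rearrangement transforms under the affine rescaling $x\mapsto x_0+rx$, and then to compare the two logarithmic weights that appear in the definition of the $L\ln L$ norm (recalled in Appendix \ref{Lorentz}), namely an expression of the form $\|g\|_{L\ln L(\Omega)}=\int_0^{|\Omega|}g^*(s)\ln\!\big(2+\tfrac1s\big)\,ds$.

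First I would record the scaling identity for rearrangements. Writing $g(x):=f(x_0+rx)$ for $x\in B_1$, the distribution functions are related by $|\{|g|>\lambda\}\cap B_1|=r^{-2}\,|\{|f|>\lambda\}\cap B_r(x_0)|$, so that $g^*(t)=f^*(r^2t)$ for $t\in(0,|B_1|]$; since $\hat f=r^2g$ this gives $\hat f^*(t)=r^2 f^*(r^2t)$. Substituting this into the defining integral and changing variables $s=r^2t$ (so $s$ ranges over $(0,r^2|B_1|]=(0,|B_r(x_0)|]$) yields
\[
\|\hat f\|_{L\ln L(B_1)}=\int_0^{|B_1|}\hat f^*(t)\ln\!\Big(2+\tfrac1t\Big)\,dt=\int_0^{|B_r(x_0)|}f^*(s)\ln\!\Big(2+\tfrac{r^2}{s}\Big)\,ds .
\]
Finally, because $r\le 1/2\le 1$ we have $\ln(2+r^2/s)\le\ln(2+1/s)$ for every $s>0$, so the last integral is at most $\int_0^{|B_r(x_0)|}f^*(s)\ln(2+\tfrac1s)\,ds=\|f\|_{L\ln L(B_r(x_0))}$, which is the claim (in fact with $C=1$). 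If the precise normalisation adopted in Appendix \ref{Lorentz} differs from the one above by a universal multiplicative constant, or replaces the weight by a comparable one, the same three steps go through and only affect the value of $C$.

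There is really no serious obstacle here: the substance is the rearrangement scaling identity $\hat f^*(t)=r^2 f^*(r^2t)$, which is classical, together with the elementary monotonicity of the logarithmic weight in $r$. The one point worth stressing — and the reason this merits a separate lemma rather than a one-line remark — is that, in contrast to the gain of a positive power of $r$ enjoyed by $L^p$ norms for $p>1$ under this rescaling, the logarithmic weight produces no such gain, so the best assertion available is the stated scale-invariant bound.
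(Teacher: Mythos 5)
Your argument is correct and is essentially identical to the paper's: both compute the rearrangement identity $\hat f^*(t)=r^2f^*(r^2t)$, change variables $s=r^2t$, and then bound the weight $\ln(2+r^2/s)\leq\ln(2+1/s)$ using $r\leq 1$. The only cosmetic difference is that the paper carries an extra constant $C$ to account for the defining quantity being merely equivalent to the norm, a point you also anticipate.
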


\begin{proof}
First we calculate
\begin{eqnarray*}
 \hat{f}^*(t) &=& \inf \{ s \geq 0 : |\{ x \in B_1 : |\hat{f} (x) |>s\}|\leq t \} \\
 &=& \inf \{ s \geq 0 : |\{ x \in B_1 : |r^2 f(x_0 + rx) |>s\}|\leq t \} \\
&=& \inf \{ s \geq 0 : |\{ y \in B_r(x_0) : | f(y) |>\frac{s}{r^2}\}|r^{-2} \leq t \} \\
&=& r^2 f^* (r^2 t)
\end{eqnarray*} 
therefore
\begin{eqnarray*}
 \|\hat{f}\|_{L \ln L(B_1)} &\leq& C\int_0^{|B_1|} \hat{f}^*(t) \ln \left(2 + \frac{1}{t}\right) \, dt \\
&=& C\int_0^{|B_1|} r^2 f^*(r^2 t) \ln \left(2 + \frac{1}{t}\right) \, dt \\
&=& C\int_0^{|B_r(x_0)|} f^*(s) \ln \left(2 + \frac{r^2}{s}\right) \, ds \\
&\leq& C\|f\|_{L \ln L(B_r(x_0))}.
\end{eqnarray*}
\end{proof}

\subsection{Scaling}
\label{scalingsect}

There will be several occasions when we will require estimates
on some small ball $B_{R/2}(x_0)$ in terms of quantities on
the ball $B_R(x_0)$.
In this section, we make a note of what scaling we will be taking,
and how each relevant quantity, and the equation itself, behave
under this operation.

Let $u$ be a solution to \eqref{eqn2} or \eqref{eqn3}. 
For $x_0 \in B_1$, let $R>0$ be such that 
$B_{R}(x_0) \subset B_1$. 
Now we rescale $u$ by defining $\hat{u}(x):= u(x_0 + Rx) $, 
$\hat{f} (x) := R^2 f(x_0 + Rx)$ and 
$\hat{\Omega} (x) := R \Omega (x_0 + Rx)$. 

We have $\hat{u} \in W^{1,2}(B_1, \mathbb{R}^m)$ and 
\begin{eqnarray*}
 -\Delta \hat{u} (x) &=& -R^2 \Delta u (x_0 + Rx) \\ 
&=& R\Omega (x_0 + Rx).R\nabla u (x_0 + Rx)  + R^{2} f(x_0 + Rx) \\ 
&=& \hat{\Omega} (x).\nabla \hat{u} (x) + \hat{f} (x),
\end{eqnarray*}
i.e. the same equation as before.
The quantities of which we will need to keep track are:
\begin{enumerate}
\item
$\|\nabla\hat{u}\|_{L^p(B_r)} =R^{1-\frac{2}{p}} \|\nabla u\|_{L^p(B_{rR}(x_0))}$ 
for any $r\in [0,1]$
\item
$\| \hat{\Omega} \|_{L^2(B_1)}=\| \Omega \|_{L^2(B_R(x_0))}$
\item
$\|\hat{u}\|_{L^p(B_1)} = R^{-\frac{2}{p}} \|u\|_{L^p(B_R(x_0))}$ 
\item
$\|\hat{f}\|_{L^p(B_1)} =  
R^{2(1-\frac{1}{p})}\|f\|_{L^p(B_R(x_0))}$
\item
$\|\hat{f}\|_{L \ln L (B_1)} \leq  C\|f\|_{L \ln L (B_R(x_0))}$ 
\end{enumerate}
where the final estimate is following from Lemma \ref{fhat}.

\section{Proof of the decay estimates, Theorem \ref{mesest}}
\label{proof_mesest}
Most of the work in the proof will be common to both parts of
the theorem. We will be referring to the $\delta$ of the second
part with the understanding that in the case of the first part, 
we could just set $\delta=1$.

We start off with $\eta=\epsilon$, taken from 
Lemma \ref{Riv}, and will assume throughout that
$\|\Omega\|_{L^2(B_1)} \leq \eta$, with the understanding
that the upper bound $\eta$ will be lowered at different points during the proof.
For our weak solution $u$ to \eqref{eqn3} corresponding 
to $\Omega$, we will assume, without loss of generality, 
that $\bar u = \frac{1}{|B_1|}\int_{B_1} u =0$.  

To begin with we use Rivi\`{e}re's decomposition of $\Omega$ (Lemma \ref{Riv}) in order to rewrite the equation \eqref{eqn3}
(equations \eqref{re1} and \eqref{re2} below).
Lemma \ref{Riv} gives us 
$A\in W^{1,2}(B_1, GL_m (\mathbb{R}))\cap L^{\infty} (B_1, GL_m (\mathbb{R}))$, $B\in W^{1,2}(B_1, gl_m (\mathbb{R}))$ and $C=C(m)<\infty$ so that
\begin{equation*}
 \nabla A - A\Omega = \nabla^{\bot} B
\end{equation*}
and 
\begin{equation*}
 \|\nabla A\|_{L^2(B_1)} + \|\nabla B\|_{L^2(B_1)} + \|dist(A,SO(m)\|_{L^{\infty}(B_1)} \leq C \|\Omega\|_{L^2(B_1)}.
\end{equation*}

Now, 
\begin{eqnarray}\label{re1}
 \text{div} (A \nabla u) &=& \nabla A. \nabla  u + A \Delta u  \nonumber \\
 &=& \nabla A. \nabla u  - A \Omega.\nabla u - Af  \nonumber \\ 
&=& \nabla^{\bot} B. \nabla u - Af
\end{eqnarray}

and
\begin{equation} \label{re2}
 \text{curl}(A \nabla u) = \nabla^{\bot}A.\nabla u.
\end{equation}
We note here that the above equations only hold in a weak sense, and more care should be taken in their calculation. We illustrate this for (\ref{re1}): \emph{A priori} $\text{div}(A\nabla u)$ is a distribution, so for $\phi \in C_c^{\infty}(B_1)$ we have 
\begin{eqnarray*}
 \text{div}(A\nabla u) [\phi] &=& -\int_{B_1} A\nabla u . \nabla \phi \\
&=& \int_{B_1} (\nabla A.\nabla u) \phi - \nabla (\phi A) . \nabla u \\
&=& \int_{B_1}  (\nabla A.\nabla u) \phi - (A\Omega .\nabla u) \phi - Af \phi \qquad\text{since}\,\, u \,\, \text{weakly solves (\ref{eqn3})} \\
&=& \int_{B_1} (\nabla^{\bot} B. \nabla u - Af) \phi = (\nabla^{\bot} B. \nabla u - Af) [\phi].
\end{eqnarray*}
 
We will now essentially carry out a Hodge decomposition of $A\nabla u$ in $B_1$ using the expressions (\ref{re1}) and (\ref{re2}). We first extend all the quantities arising above to functions on $\mathbb{R}^2$.

Let $Ex:W^{1,2}(B_1) \rightarrow W_{0}^{1,2}(\mathbb{R}^2)$ be a bounded extension operator
with each function in the image supported in $B_2$. 
Denote $\tilde{u} = Ex(u) \in W^{1,2}(\mathbb{R}^2, \mathbb{R}^m)$ and note that since we are assuming $\int_{B_1} u = 0$, by the Poincar\'{e} inequality and by standard properties of $Ex$ we have 
\begin{equation*}
 \|\tilde{u}\|_{W^{1,2}(\mathbb{R}^2)} \leq C\|u\|_{W^{1,2}(B_1)} \leq C \|\nabla u \|_{L^2(B_1)}
\end{equation*}
and $u = \tilde{u}$ in $B_1$.

For $A$, first let $\hat{A} = A - \frac{1}{|B_1|}\int_{B_1} A$ and $\tilde{A} = Ex (\hat{A}) \in W^{1,2}(\mathbb{R}^2, gl_m (\mathbb{R})) $. Noting that $\int_{B_1} \hat{A} =0$ and using the same argument as for $u$ we have 
\begin{equation*}
 \|\tilde{A}\|_{W^{1,2}(\mathbb{R}^2)} \leq C \|\nabla A\|_{L^2(B_1)}
\end{equation*}
here we have used that $\nabla \tilde{A} = \nabla \hat{A} = \nabla A$  in $B_1$. Notice also that $\tilde{A} \nabla \tilde{u} + \left( \frac{1}{|B_1|} \int_{B_1} A \right) \nabla \tilde{u} = A\nabla u$ in $B_1$. 

We carry out the same extension for $B$ to get $\tilde{B}$ as above for $A$. 
We extend $f$ by zero (without relabelling), so by 
Appendix \ref{Lorentz}, $f \in h^1(\R^2)$ with $\|f\|_{h^1(\R^2)} \leq C \|f\|_{L\ln L(B_1)}$. 
\newline

Now we define 
\begin{equation*}
D:= N [ \nabla^{\bot} \tilde{B} . \nabla \tilde{u}], 
\end{equation*}
\begin{equation*}
E:= N [ \nabla^{\bot} \tilde{A} . \nabla \tilde{u}],
\end{equation*}
\begin{equation*}
F:= -N[Af],
\end{equation*}
where $N$ is the Newtonian potential (see Appendix \ref{sing}).
Note that the quantity $Af$ is well defined on the whole of $\mathbb{R}^2$ by the definition of $f$.
Finally let 
\begin{equation*}
 H:= \tilde{A} \nabla \tilde{u} + \left( \frac{1}{|B_1|} \int_{B_1} A \right) \nabla \tilde{u} - \nabla D -\nabla F - \nabla^{\bot} E. 
\end{equation*}

The first thing to notice about $H$ is that
\begin{equation}\label{hodge}
 H = A \nabla u - \nabla D - \nabla F - \nabla^{\bot} E
\end{equation}
in $B_1$.
Hence we have
\begin{eqnarray*}
\text{div} (H) = \text{div} (A\nabla u) -\Delta (D+F) = \text{div} (A\nabla u) - \nabla^{\bot} \tilde{B}.\nabla \tilde{u} + A f = 0
\end{eqnarray*}
weakly in $B_1$, and a similar calculation shows  $\text{curl} (H) = 0$ weakly in $B_1$. (Again care must be taken in checking 
these.) Therefore $H$ is harmonic in $B_1$ (i.e. corresponds 
to a harmonic 1-form). 

Suppose $r\in (0,1]$. (For some estimates later it will need
to be less than $\frac{1}{2}$.)

Without loss of generality, we may assume that $\delta\in (0,1]$.
(Recall that when addressing the first part of the theorem, 
we are just setting $\delta=1$.)
For $\eta$ small enough, depending on $\delta$, we may assume (by the estimate in Lemma \ref{Riv}) that $A$ is close to a special-orthogonal matrix in the sense that both $A$ and $A^{-1}$ 
change the length of any vector by at most a factor of $1+\delta$. Therefore
\begin{equation}\label{GFH}
\begin{aligned}
 \| \nabla u \|_{L^2(B_r)}^2 &\leq (1+\delta)^2\|A\nabla u \|_{L^2(B_r)}^2 \\
&\leq  (1+3\delta)\|A\nabla u \|_{L^2(B_r)}^2\\ 
&\leq
(1+4\delta)\|H \|_{L^2(B_r)}^2+
C( \|\nabla D \|_{L^2(B_r)}^2 + \| \nabla F \|_{L^2(B_r)}^2 + \| \nabla E \|_{L^2(B_r)}^2),
\end{aligned}
\end{equation}
where $C$ is dependent on $\delta$.
In order to obtain the inequalities of Theorem \ref{mesest} we estimate $\|H \|_{L^2(B_r)}$, $\|\nabla D \|_{L^2(B_r)}$, $\| \nabla F \|_{L^2(B_r)}$ and $\| \nabla E \|_{L^2(B_r)}$. 

First we consider $\nabla D = \nabla N[ \nabla^{\bot} \tilde{B} . \nabla \tilde{u}] $ and $\nabla E = \nabla N [ \nabla^{\bot} \tilde{A} . \nabla \tilde{u}] $. Notice that by the work of
Coifman-Lions-Meyer-Semmes \cite{clms}  and the fact that $\nabla N : \mathcal{H}^1(\mathbb{R}^2) \rightarrow L^{2,1}(B_1)$ is a bounded linear operator (see Appendix \ref{misc}) we have, 
\begin{eqnarray}\label{D and E}
 \|\nabla D \|_{L^2(B_1)} + \| \nabla E \|_{L^2(B_1)} &\leq& C \left(\|\nabla D \|_{L^{2,1}(B_1)} + \| \nabla E \|_{L^{2,1}(B_1)}\right) \nonumber \\
&=& C \left(\|\nabla N[ \nabla^{\bot} \tilde{B} . \nabla \tilde{u}] \|_{L^{2,1}(B_1)} + \|\nabla N [ \nabla^{\bot} \tilde{A} . \nabla \tilde{u}] \|_{L^{2,1}(B_1)}\right) \nonumber \\
&\leq& C \left(\|\nabla^{\bot} \tilde{B} . \nabla \tilde{u}\|_{\mathcal{H}^1(\mathbb{R}^2)} + \|\nabla^{\bot} \tilde{A} . \nabla \tilde{u}\|_{\mathcal{H}^1(\mathbb{R}^2)}\right) \nonumber \\
&\leq& C \left( \|\nabla B \|_{L^2(B_1)} + \|\nabla A\|_{L^2(B_1)}\right)\|\nabla u \|_{L^2(B_1)} \nonumber \\
&\leq& C\|\Omega\|_{L^2(B_1)} \|\nabla u \|_{L^2(B_1)}
\end{eqnarray}
where we have also used the continuous embedding $L^{2,1} \hookrightarrow L^2$ and the estimate from Lemma \ref{Riv}. 

For $\nabla F = -\nabla N[Af]$ we use (Appendix \ref{misc})
that the Riesz potential $\nabla N : L^1(B_1) \rightarrow L^{2,\infty}(B_1)$ is a bounded operator; also $\nabla N : h^1(\R^2) \rightarrow L^{2,1}(B_1)$ is bounded. We will also use the following: $L^{2,\infty}$ is the dual of $L^{2,1}$; if $f \in L\ln L (B_1)$ then for any $g \in L^{\infty}$, $gf \in L\ln L (B_1)$ and $\|gf\|_{L\ln L(B_1)} \leq \|g\|_{L^{\infty}} \|f\|_{L\ln L(B_1)}$ and finally we use the continuous embedding $L\ln L(B_1) \hookrightarrow h^1(\R^2)$ (see 
Appendix \ref{Lorentz}). We have
\begin{eqnarray}\label{F}
 \|\nabla F\|_{L^2(B_1)}^2 &\leq& C\|\nabla F\|_{L^{2,\infty}(B_1)} \|\nabla F\|_{L^{2,1}(B_1)} \nonumber \\
 &\leq& C \|  A f \|_{L^1(B_1)} \|  A f \|_{h^1(\R^2)} \nonumber \\ 
&\leq& C \|f\|_{L^1(B_1)} \|  A f \|_{L\ln L (B_1)}  \nonumber \\
&\leq& C \|f\|_{L^1(B_1)} \|f\|_{L \ln L(B_1)}.
\end{eqnarray}
Also, using merely the boundedness of 
$\nabla N : L^1(B_1) \rightarrow L^{2,\infty}(B_1)$, we have
\begin{equation}
\label{FL1}
\|\nabla F\|_{L^1(B_1)} \leq 
C\|f\|_{L^1(B_1)}.
\end{equation}

From here, we proceed differently in order to prove the two different parts of the theorem.
For the first part, we now estimate $\|H\|_{L^1(B_{2/3})}$ and apply standard estimates for harmonic functions in order to estimate $\|H\|_{L^2(B_r)}$: Using Lemma \ref{nablauL}, and estimates 
\eqref{D and E} and \eqref{FL1}, we have
\begin{eqnarray}\label{0H}
 \|H\|_{L^1(B_{2/3})} &\leq& C \left( \|\nabla u\|_{L^1(B_{2/3})} + \|\nabla D\|_{L^1(B_{2/3})} + \|\nabla E\|_{L^1(B_{2/3})} + \|\nabla F \|_{L^1(B_{2/3})} \right)  \nonumber \\
&\leq& C \left( \|u\|_{L^1(B_1)} + \|f\|_{L^1(B_1)}+ \|\Omega\|_{L^2(B_1)} \|\nabla u \|_{L^2(B_1)}  \right).
\end{eqnarray}
Since $H$ is harmonic we have pointwise estimates on $H$ and its derivatives on the interior of $B_{2/3}$ in terms of $\|H\|_{L^1(B_{2/3})}$, and in particular 
\begin{equation}\label{nablaH}
 \|H\|_{L^{\infty}(B_{1/2})} + \|\nabla H\|_{L^{\infty}(B_{1/2})} \leq C \|H\|_{L^1(B_{2/3})}.
\end{equation}
Therefore if we consider $r\in (0,\frac 12]$, then
\begin{eqnarray}\label{H}
 \|H\|_{L^2(B_r)}^2 &\leq& \pi r^2 \|H\|_{L^{\infty}(B_r)}^2 \nonumber \\
 &\leq& Cr^2 \left( \|u\|_{L^1(B_1)}^2 + \|f\|_{L^1(B_1)}^2+ \|\Omega\|_{L^2(B_1)}^2 \|\nabla u \|_{L^2(B_1)}^2  \right).
\end{eqnarray}

Now, looking back at inequality (\ref{GFH}) and using (\ref{D and E}), (\ref{F}) and (\ref{H}) we have 
\begin{eqnarray*}
 \| \nabla u \|_{L^2(B_r)}^2 &\leq& C \left(\|\Omega\|_{L^2(B_1)}^2 \|\nabla u \|_{L^2(B_1)}^2 + r^2 \|u\|_{L^1(B_1)}^2 + \|f\|_{L^1(B_1)} \|f\|_{L\ln L(B_1)} \right)
\end{eqnarray*}
which is the first inequality that we seek from the first part 
of the theorem.

In order to get the second estimate \eqref{int2}
of the first part of the theorem,
we return to the Hodge decomposition \eqref{hodge} 
which tells us that
\begin{equation*}
 \nabla u = A^{-1}( H + \nabla D + \nabla F + \nabla^{\bot} E)
\end{equation*}
in $B_1$. 
Using the fact that the operators $\nabla^2 N : h^1(\R^2)\rightarrow L^1(B_1)$ and $\nabla^2 N: \mathcal{H}^1(\R^2) \rightarrow L^1(\R^2)$ are bounded (see Appendix \ref{Hardy}),
Lemma \ref{Riv} and equations \eqref{D and E}, \eqref{F}, \eqref{0H}, \eqref{nablaH} and \eqref{H} we find that
\begin{equation*}
 \nabla^2 u = \nabla A^{-1}.( H + \nabla D + \nabla F + \nabla^{\bot} E) + A^{-1}(\nabla H + \nabla^2 D + \nabla^2 F + \nabla\nabla^{\bot} E)
\end{equation*}
and 
\begin{equation*}
 \|\nabla^2 u \|_{L^1(B_r)} \leq C(\|\Omega \|_{L^2(B_1)} \|\nabla u \|_{L^2(B_1)} + r^2\|u\|_{L^1(B_1)} + r\|\Omega \|_{L^2(B_1)}\|u\|_{L^1(B_1)} + \|f\|_{L\ln L(B_1)}).
\end{equation*}
Since we have assumed without loss of generality 
that $\int_{B_1} u=0$, by an application of the Poincar\'{e} inequality we have
\begin{equation*}
 \|\nabla^2 u \|_{L^1(B_r)} \leq C(\|\Omega \|_{L^2(B_1)} \|\nabla u \|_{L^2(B_1)} + r^2\|u\|_{L^1(B_1)} + \|f\|_{L\ln L(B_1)}),
\end{equation*}
as desired.

For the second part of the theorem, 
we return to a general $r\in (0,1]$. We
will now control $H$ using the standard decay estimate
$$\|H\|_{L^2(B_r)}^2 \leq r^2 \|H\|_{L^2(B_1)}^2$$
which holds since $H$ is harmonic (see \cite[Lemma 3.3.12]{helein_conservation}).

Then using \eqref{GFH} and \eqref{D and E} and \eqref{F} again,
we find that
\begin{equation}
\begin{aligned}
 \| \nabla u \|_{L^2(B_r)}^2 &\leq 
(1+4\delta)\|H \|_{L^2(B_r)}^2+
C( \|\nabla D \|_{L^2(B_r)}^2 + \| \nabla F \|_{L^2(B_r)}^2 + \| \nabla E \|_{L^2(B_r)}^2)\\
&\leq
(1+4\delta)r^2\|H \|_{L^2(B_1)}^2+
C( \|\nabla D \|_{L^2(B_1)}^2 + \| \nabla F \|_{L^2(B_1)}^2 + \| \nabla E \|_{L^2(B_1)}^2)\\
&\leq
(1+5\delta)r^2\|A\nabla u \|_{L^2(B_1)}^2+
C( \|\nabla D \|_{L^2(B_1)}^2 + \| \nabla F \|_{L^2(B_1)}^2 + \| \nabla E \|_{L^2(B_1)}^2)\\
&\leq
(1+5\delta)(1+\delta)^2 r^2\|\nabla u \|_{L^2(B_1)}^2+
C( \|\nabla D \|_{L^2(B_1)}^2 + \| \nabla F \|_{L^2(B_1)}^2 + \| \nabla E \|_{L^2(B_1)}^2)\\
&\leq
(1+100\delta)r^2\|\nabla u \|_{L^2(B_1)}^2+
C\left(\|\Omega\|_{L^2(B_1)}^2 \|\nabla u \|_{L^2(B_1)}^2 
+ \|f\|_{L^1(B_1)} \|f\|_{L \ln L(B_1)} \right)
\end{aligned}\nonumber
\end{equation}
Thus, by repeating the argument with $\delta$ reduced by 
a factor of 100, we conclude the proof.

\section{Proof of the $W^{2,1}$ estimate, Theorem \ref{betest}}
\label{proof_betest}
We can say immediately that the $\eta_1$ whose existence
is claimed in the theorem can be chosen as
$\eta_1^2 = \min \left\{ \eta^2 ,  \frac{\epsilon_0}{K_1} \right\}$, 
where $\epsilon_0$ is that given in Lemma \ref{LS} corresponding to $k=4$, and 
$K_1$ and $\eta$ are from the first part of 
Theorem \ref{mesest}.

We would like to rescale the first estimate \eqref{int1} of the 
first part of Theorem \ref{mesest}, in the case that $r=\frac 12$.
Indeed, adopting the notation of Section \ref{scalingsect},
we know that
\begin{equation}
\label{int1conseq}
\begin{aligned}
\|\nabla \hat u \|_{L^2(B_{1/2})}^2 &\leq  K_1\left( \|\hat\Omega\|_{L^2(B_1)}^2 \|\nabla \hat u\|_{L^2(B_1)}^2 + \|\hat u\|_{L^1(B_1)}^2 + \|\hat f\|_{L^1(B_1)} \|\hat f\|_{L\ln L(B_1)}\right)\\
&\leq  K_1\|\hat\Omega\|_{L^2(B_1)}^2 \|\nabla \hat u\|_{L^2(B_1)}^2 + C\left(\|\hat u\|_{L^1(B_1)}^2 
+ \|\hat f\|_{L\ln L(B_1)}^2\right)\\
\end{aligned}\nonumber
\end{equation}
and (again by Section \ref{scalingsect}) this translates to
\begin{equation}
\begin{aligned}
\|\nabla u \|_{L^2(B_{R/2}(x_0))}^2 
&\leq  K_1\|\Omega\|_{L^2(B_R(x_0))}^2 \|\nabla u\|_{L^2(B_R(x_0))}^2 + C\left(R^{-4}\|u\|_{L^1(B_R(x_0))}^2 + \|f\|_{L\ln L(B_R(x_0))}^2\right).
\end{aligned}\nonumber
\end{equation}
Using our upper bound for $\eta$ and the fact that $R\leq 1$ we have, in particular
\begin{equation*}
 \|\nabla u \|_{L^2(B_{R/2}(x_0))}^2 \leq \epsilon_0 \|\nabla u\|_{L^2(B_R(x_0))}^2 + CR^{-4}\left(\|u\|_{L^1(B_1)}^2 + \|f\|_{L\ln L(B_1)}^2\right).
\end{equation*}
Letting $\Gamma = C\left(\|u\|_{L^1(B_1)}^2 + \|f\|_{L\ln L(B_1)}^2\right)$ we are precisely in the set-up of Lemma \ref{LS}, since this estimate is true in particular 
for all $B_{2R}(x_0) \subset B_1$. 
Therefore 
\begin{equation}
\label{W12est}
 \|\nabla u\|_{L^2(B_{1/2})} \leq C \left( \|u\|_{L^1(B_1)} + \|f\|_{L\ln L(B_1)} \right).
\end{equation}
It remains to improve this estimate to control the second
derivatives, and for that we use the second estimate \eqref{int2}
of the first part of Theorem \ref{mesest}, in the case 
$r=\frac 12$, which we then scale by a factor $\frac 12$ to
give:
\begin{equation*}
\label{int2conseq}
 \|\nabla^2 u \|_{L^1(B_{1/4})} \leq C\left(\|\Omega \|_{L^2(B_{1/2})} \|\nabla u \|_{L^2(B_{1/2})} + \|u\|_{L^1(B_{1/2})} + \|f\|_{L\ln L(B_{1/2})}\right).
\end{equation*}
Combining with \eqref{W12est} then yields
\begin{equation*}
 \|\nabla^2 u \|_{L^1(B_{1/4})} \leq 
 C\left(\|u\|_{L^1(B_1)} + \|f\|_{L\ln L(B_1)}\right),
\end{equation*}
and a simple rescaling and covering argument gives us that for any compactly contained $U\subset \subset B_1$ there is a $C=C(U, m) < \infty$ such that  
\begin{equation*}\label{nablaU}
 \|u\|_{W^{2,1}(U)} \leq C \left( \|u\|_{L^1(B_1)} + \|f\|_{L\ln L(B_1)} \right).
\end{equation*}

\section{Proof of the compactness, Theorem \ref{imp}}
\label{proof_imp}
Here we pick  $\eta_2 = \min \{\eta_1, \eta , \sqrt{\frac{1}{2K_2}}\}$ where $\eta_1$ is from Theorem \ref{betest}, and $\eta$ and $K_2$ are from the second part of Theorem \ref{mesest} for the choice $\delta = 1$.  We know (by Theorem \ref{betest}) that 
for all $U\subset\subset B_1$,
our sequence $\lbrace u_n \rbrace$ is uniformly bounded in $W^{2,1}(U)$, so by the Sobolev embedding theorem there exists some $u \in W^{1,2}_{loc}(B_1)$ such that (up to a subsequence) $u_n \rightharpoonup u$ weakly in $W^{1,2}(B_{2/3})$. We also know that $\lbrace \nabla u_n \rbrace$ is uniformly bounded in $W^{1,1}(B_{2/3})$, so by Lemma \ref{cmeasure} (with $ \nabla u_n  = V_n$) if we have
\begin{equation}
\label{no_conc}
\lim_{r\downarrow 0} \limsup _{n\rightarrow \infty} \|\nabla u_n \|_{L^2(B_{r}(x))} =0
\end{equation}
for all $x\in B_{2/3}$, then 
\begin{equation*}
 \nabla u_n \rightarrow \nabla u 
\end{equation*}
strongly in $L^{2}_{loc}(B_{2/3})$ which would prove the theorem. 
Therefore, it remains to prove \eqref{no_conc}.

Now pick $x_0 \in B_{2/3}$ and $R\in (0,1/2]$ small enough such that $B_R(x_0) \subset B_{2/3}$. 
Applying the second part of Theorem \ref{mesest} to the
rescaled scenario from Section \ref{scalingsect} 
(for each $n$) yields (for $r\in (0,1]$)
\begin{equation}
\begin{aligned}
\|\nabla \hat u_n \|_{L^2(B_r)}^2 &\leq  K_2\|\hat \Omega_n\|_{L^2(B_1)}^2 \|\nabla \hat u_n\|_{L^2(B_1)}^2 
 + 2r^2\|\nabla \hat u_n\|_{L^2(B_1)}^2 + K_2\|\hat f_n\|_{L^1(B_1)} \|\hat f_n\|_{L\ln L(B_1)}
\end{aligned}\nonumber
\end{equation}
and reversing the scaling leaves us with
\begin{equation}
\begin{aligned}
\|\nabla u_n \|_{L^2(B_{rR}(x_0))}^2 
&\leq
K_2\|\Omega_n\|_{L^2(B_R(x_0))}^2 
\|\nabla u_n\|_{L^2(B_R(x_0))}^2\\
&\quad + 2r^2 \|\nabla u_n\|_{L^2(B_R(x_0))}^2 + K_2\|f_n\|_{L^1(B_R(x_0))} \|f_n\|_{L\ln L(B_R(x_0))}\\
&\leq \frac{1}{2}\|\nabla u_n\|_{L^2(B_R(x_0))}^2\\
&\quad+ C \left(r^2 \|\nabla u_n\|_{L^2(B_R(x_0))}^2 + 
\left[\ln \left(\frac{1}{R}\right) \right]^{-1}\|f_n\|_{L\ln L(B_R(x_0))}^2\right)
\end{aligned}\nonumber
\end{equation}
using Lemma \ref{l1-llogl}.

Now, using that $\lbrace u_n \rbrace$ is uniformly bounded in $W^{2,1}(B_{2/3})$ and the hypotheses of the theorem, we have
\begin{equation*}
 \|\nabla u_n \|_{L^2(B_{rR}(x_0))}^2 \leq 1/2 \|\nabla u_n\|_{L^2(B_R(x_0))}^2 + C\left( r^2+ \left[\ln \left(\frac{1}{R}\right) \right]^{-1} \right)
\end{equation*}
Hence
\begin{equation*}
 \lim_{R\downarrow 0} \lim_{r \downarrow 0} \limsup_{n \rightarrow \infty}  \|\nabla u_n \|_{L^2(B_{rR}(x_0))}^2 \leq 1/2 \lim_{R\downarrow 0} \lim_{r \downarrow 0} \limsup_{n \rightarrow \infty}  \|\nabla u_n \|_{L^2(B_{R}(x_0))}^2
\end{equation*}
and we have shown that 
\begin{equation*}
 \lim_{r \downarrow 0} \limsup_{n \rightarrow \infty}  \|\nabla u_n \|_{L^2(B_{r}(x_0))}=0
\end{equation*}
which proves the theorem.

\section{Proof of the compact embedding 
$L\ln L\hookrightarrow H^{-1}$, Corollary \ref{compact_embedding}}
\label{proof_corollary}

In this section we use $\mathcal{H}^1-BMO$ duality  
(Appendix \ref{Hardy} and \cite{feff_stein}), the 
compactness result Theorem \ref{imp} 
and the continuous embedding
$W^{1,2}(\mathbb{R}^2) \hookrightarrow BMO(\mathbb{R}^2)$ 
(Appendix \ref{Lorentz}) to prove
the compactness of the embedding 
$L\ln L(B_1)\hookrightarrow H^{-1}(B_1)$.

First we check that the embedding $L\ln L\hookrightarrow H^{-1}$
exists and is continuous. We will realise $f\in L\ln L(B_1)$ as a bounded linear functional on $W_0^{1,2}(B_1)$. 

Recall from Appendix \ref{Lorentz} that if $f \in L\ln L(B_1)$ then $f-\bar f \in \mathcal{H}^1(\R^2)$ and $\|f -\bar f\|_{\mathcal{H}^1(\R^2)} \leq C\|f\|_{L\ln L(B_1)}$.  For $\phi \in W_0^{1,2}(B_1)$ we extend it by zero and calculate
\begin{eqnarray*}
\int_{B_1} f\phi &=& \int ( f -\bar f) \phi + \int  \bar f \phi \\
&\leq& C \|f- \bar f\|_{\mathcal{H}^1(\mathbb{R}^m)}\|\phi\|_{BMO(\mathbb{R}^2)} + \frac{1}{\pi} \|f\|_{L^1(B_1)}\|\phi\|_{L^1(B_1)} \\
&\leq& C \|f\|_{L\ln L(B_1)}\|\phi\|_{W_0^{1,2}(B_1)}
\end{eqnarray*}

Thus $f \in H^{-1}(B_1)$ and $\|f\|_{H^{-1}(B_1)} \leq C\|f\|_{L\ln L(B_1)}.$

Now consider a sequence $\{f_n\}\subset L\ln L(B_1)$ such that $\|f_n\|_{L\ln L(B_1)} \leq \Lambda < \infty$. We can extend each $f_n$ to be zero outside $B_1$ and consider the sequence of solutions $\{u_n\}\subset W_0^{1,2}(B_2)$ weakly solving 
\begin{equation*}
 -\Delta u_n = f_n \qquad \text{ on }B_2.
\end{equation*}
By the compactness of Theorem \ref{imp} we can conclude that there exists some $u \in W^{1,2}(B_2)$ 
such that (up to a subsequence) $u_n \rightarrow u$ strongly in $W^{1,2}(B_1)$. 

Writing $f=-\Delta u$ (which can clearly be viewed as an element of $H^{-1}(B_1)$) we see that 
\begin{eqnarray*}
 \|f_n - f\|_{H^{-1}(B_1)} &=& \sup_{\phi \in W_0^{1,2}(B_1)\,\,\,  \|\phi\|_{W_0^{1,2}(B_1) =1}} \int (f_n - f)\phi \\
&=& \sup_{\phi \in W_0^{1,2}(B_1)\,\,\,  \|\phi\|_{W_0^{1,2}(B_1) =1}}  \int (\nabla u_n - \nabla u).\nabla \phi \\
&\leq& \|\nabla u_n - \nabla u\|_{L^2(B_1)} \rightarrow 0
\end{eqnarray*}
 as $n \rightarrow \infty$. 

\section{Proof of the optimal regularity,
Theorem \ref{optimal_regularity}}
\label{proof_optimal_regularity}

The proof will proceed broadly in two steps.
First, we will use a type of `geometric bootstrapping'
to show that solutions $u$ have almost
the optimal regularity claimed.
\begin{lemma}
\label{almost_optimal_regularity}
Suppose $u\in W^{1,2}(B_1,\mathbb{R}^m)$ is a weak solution to 
\begin{equation*}\label{eqnin7.1}
-\Delta u = \Omega.\nabla u +f
\end{equation*}
where $\Omega \in L^2(B_1,so(m)\otimes \mathbb{R}^2)$
and $f\in L^p$ for some $p\in (1,2)$. Then 
$u\in W_{loc}^{2,\gamma}(B_1)$ for all $\gamma\in [1,p)$.
\end{lemma}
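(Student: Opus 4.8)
The idea is to run a bootstrap in which the main estimate \eqref{int1} of Theorem \ref{mesest} (suitably rescaled) feeds a Morrey-type decay estimate for $\nabla u$, which in turn improves the integrability of the right-hand side $\Omega.\nabla u + f$, after which standard Calderón--Zygmund theory pushes $u$ into a better $W^{2,\gamma}$. First I would localise and reduce to a fixed small ball, using that the hypotheses are scale-invariant in the right way (Section \ref{scalingsect}) and that, after restricting to a slightly smaller disc, we may assume $\|\Omega\|_{L^2}$ is as small as we like (absolute continuity of the integral), so that $\eta_0$ from the first part of Theorem \ref{mesest} is available. Note also that $f\in L^p$ with $p>1$ embeds into $L\ln L$ on any bounded domain, so \eqref{int1} applies directly.

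The core step is to convert \eqref{int1} into a Morrey estimate. Rescaling \eqref{int1} to a ball $B_R(x_0)\subset\subset B_1$ and taking $r=\rho R$ for $\rho\in(0,1/2]$, the term $r^2\|u-\bar u\|_{L^1}^2$ scales well, the $\Omega$-term is small, and the $\|f\|_{L^1}\|f\|_{L\ln L}$ term on a ball of radius $R$ is controlled, via Hölder, by $\|f\|_{L^p(B_R)}^2 \leq (\text{const})\,R^{4(1-1/p)}\|f\|_{L^p}^2$ — i.e. by $R^{2\beta}$ with $\beta := 2(1-1/p)\in(0,1)$. Combining the $\Omega$-term contribution (handled by a Lemma \ref{LS}-type hole-filling iteration exactly as in the proof of Theorem \ref{betest}) with this inhomogeneous scaling yields
\begin{equation*}
\|\nabla u\|_{L^2(B_\rho(x_0))}^2 \leq C\,\rho^{2\beta'}\left(\|u\|_{L^1(B_1)}^2 + \|f\|_{L^p(B_1)}^2\right)
\end{equation*}
for every $\rho$ small and every $x_0$ in a fixed compactly contained subdomain, for any $\beta' < \beta$ (the loss of a little is harmless and comes from absorbing the $\rho^2$ and $R^{2\beta}$ terms). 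That is, $\nabla u$ lies in the Morrey space $L^{2,\lambda}_{loc}$ with $\lambda = 2 - 2\beta' = \tfrac{4}{p}+\varepsilon'$ close to $4/p$; equivalently $\nabla u \in L^{q}_{loc}$ for $q$ close to $\tfrac{2p}{p-1}$, and more usefully $|\nabla u|^2$ lies in a Morrey space that makes $\Omega.\nabla u \in L^{q_1}$ for some $q_1>1$ via $|\Omega.\nabla u|\le |\Omega||\nabla u|$, Hölder, and $\Omega\in L^2$. One then has $-\Delta u \in L^{q_1} + L^p$, so elliptic regularity gives $u\in W^{2,\min(q_1,p)}_{loc}$, hence $\nabla u\in L^{q_2}_{loc}$ with $q_2$ larger, and we iterate. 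Each round multiplies the integrability exponent of $\nabla u$ by a fixed factor bounded below by something $>1$ until $\Omega.\nabla u$ reaches $L^\gamma$ for every $\gamma<p$ (since once $\nabla u\in L^{s}_{loc}$ with $s$ large, $\Omega.\nabla u\in L^{\gamma}$ for $\tfrac1\gamma=\tfrac12+\tfrac1s$, which exceeds $p$-integrability threshold as $s\to\infty$). At that stage $-\Delta u\in L^\gamma_{loc}$ for all $\gamma<p$, and Calderón--Zygmund gives $u\in W^{2,\gamma}_{loc}$ for all $\gamma\in[1,p)$, as claimed.

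The main obstacle I expect is the bookkeeping of the Morrey/iteration step: one has to track how the Morrey exponent of $\nabla u$ improves under each application of the decay estimate combined with the fractional-integration (Adams/Morrey embedding) estimate for the Newtonian potential, and verify that the iteration genuinely gains a fixed factor at each step rather than merely converging to a suboptimal exponent — otherwise one only recovers regularity below some threshold strictly less than $p$. The key quantitative point making this work is precisely that \eqref{int1} has the scaling $r^2\|u-\bar u\|_{L^1}^2$ (rather than, say, $r^2\|u-\bar u\|_{L^2}^2$), which is what lets the Morrey exponent for $\nabla u$ start arbitrarily close to the optimal $4/p$ and hence lets the inhomogeneity $f\in L^p$ rather than $\Omega$ be the only genuine obstruction; the $\Omega$-driven part is handled entirely by the smallness and the hole-filling lemma, exactly as in Section \ref{proof_betest}. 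A secondary subtlety is ensuring the endpoint loss ``$\gamma<p$'' is the only loss — which is why this is stated as ``almost optimal regularity'' and the genuine endpoint $W^{2,p}$ presumably requires the sharper second part of Theorem \ref{mesest} in the subsequent step of the proof of Theorem \ref{optimal_regularity}.
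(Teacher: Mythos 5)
Your first step---a Morrey decay estimate $\|\nabla u\|_{L^2(B_\rho(x_0))}^2\lesssim\rho^{2\beta'}$ for $\beta'<2(1-1/p)$---is essentially the paper's Lemma \ref{decay_app}, and your route to it is workable: iterating \eqref{int1} at a small fixed ratio, with Poincar\'e converting $r^2\|u-\bar u\|_{L^1}^2$ into $Cr^2\|\nabla u\|_{L^2}^2$, does beat the target rate $r^{2\beta'}$ since $\beta'<1$ (the paper instead uses the sharp constant in \eqref{est2} to get the exponent $4(1-1/p)$ exactly, but either suffices here). The gap lies in everything after that. First, membership of $\nabla u$ in a Morrey space does not give $\nabla u\in L^q_{loc}$ for any $q>2$---Morrey spaces do not embed into higher Lebesgue spaces (consider $|\nabla u|=|x|^{-1+\epsilon}$)---so your claim that ``equivalently $\nabla u\in L^q_{loc}$ for $q$ close to $2p/(p-1)$'' is unjustified, and the exponent is wrong in any case. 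The improved integrability must come from going back through the equation: the Morrey decay of $\|\Omega.\nabla u\|_{L^1(B_r)}$ feeds Adams' Riesz potential estimate (Lemma \ref{adams_lemma}, part 1), giving $\nabla u\in L^q$ for $q<\frac{2}{2-p}$, which is the correct seed.

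Second, and more seriously, the bootstrap you describe---H\"older against $\Omega\in L^2$, then classical Calder\'on--Zygmund, then Sobolev---gains nothing: if $\nabla u\in L^q$ then $\Omega.\nabla u\in L^s$ with $s=\frac{2q}{2+q}$, and $W^{2,s}\hookrightarrow W^{1,\frac{2s}{2-s}}$ returns $\nabla u\in L^{q'}$ with $q'=\frac{2s}{2-s}=q$. The exponent map is the identity, so the assertion that ``each round multiplies the integrability exponent by a fixed factor $>1$'' is false; the iteration stalls at its starting exponent, which is exactly the failure the paper flags (``a bootstrapping argument using only classical Calderon--Zygmund methods does not work''). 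What actually drives the exponent up to $p$ is the second part of Lemma \ref{adams_lemma}: under the Morrey decay hypothesis, $I_1$ maps $L^s$ into $L^{s\frac{2-\alpha}{1-\alpha}}=L^{\frac{2s}{2-p}}$, a genuine multiplicative gain of $\frac{2}{2-p}>2$ per step, which after H\"older yields the improvement $s\mapsto s\frac{2}{2+s-p}$ of \eqref{bootstrap}, whose fixed point is $p$. You anticipate precisely this danger in your ``main obstacle'' paragraph, but the mechanism you actually propose does not avoid it, so as written the central iterative step of your proof fails.
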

Once we have got this far, we can quantify the $W^{2,\gamma}$
regularity via the following estimate.
\begin{lemma}\label{lp}
Let $p\in(1,2)$ and
suppose $\gamma\in (\frac{p+1}{2},p)$ and 
$u \in W^{2,\gamma}(B_1, \R^m)$ solves 
\begin{equation*}
-\Delta u = \Omega.\nabla u +f
\end{equation*}
where $\Omega \in L^2(B_1,so(m)\otimes \mathbb{R}^2)$
and $f\in L^p$. Then there exists $\eta_3 = \eta_3(p,m)  >0$
and $C= C(p,m) < \infty$ such that if 
$\|\Omega \|_{L^2(B_1)} \leq \eta_3$ then 
\begin{equation}
\label{alphaest}
\|u\|_{W^{2, \gamma}(B_{1/2})} \leq C(\| f\|_{L^p(B_1)} + \|u\|_{L^1(B_1)}).
\end{equation}
\end{lemma}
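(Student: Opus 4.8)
\textbf{Proof plan for Lemma \ref{lp}.}

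The plan is to mimic the Hodge-decomposition argument from the proof of Theorem \ref{mesest}, but keeping track of the genuinely better integrability afforded by $f\in L^p$ rather than merely $f\in L\ln L$. First I would rescale so as to work on $B_1$ with $\int_{B_1}u=0$ (so that $\|u\|_{L^1}$ controls $\|u\|_{W^{1,2}}$ up to constants via Poincar\'e), and apply Lemma \ref{Riv} with $\eta_3$ small to obtain $A,B$ with $\nabla A-A\Omega=\nabla^\bot B$ and the usual smallness $\|\nabla A\|_{L^2}+\|\nabla B\|_{L^2}\le C\|\Omega\|_{L^2}$, with $A$ within $1+\delta$ of $SO(m)$. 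As in \eqref{re1}--\eqref{re2} this converts the equation into $\mathrm{div}(A\nabla u)=\nabla^\bot B.\nabla u - Af$ and $\mathrm{curl}(A\nabla u)=\nabla^\bot A.\nabla u$. Then, after extending $u,A,B$ to $\R^2$ (using $\|\tilde u\|_{W^{1,2}}\le C\|\nabla u\|_{L^2(B_1)}$ etc.) and extending $f$ by zero, I would write the Hodge decomposition $A\nabla u = H + \nabla D + \nabla F + \nabla^\bot E$ in $B_1$, with $D=N[\nabla^\bot\tilde B.\nabla\tilde u]$, $E=N[\nabla^\bot\tilde A.\nabla\tilde u]$, $F=-N[Af]$, and $H$ harmonic on $B_1$.

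The estimates on $D$ and $E$ are exactly as in \eqref{D and E}: by the compensated-compactness result of Coifman-Lions-Meyer-Semmes and boundedness of $\nabla^2 N:\mathcal H^1(\R^2)\to L^1(\R^2)$ one gets $\|\nabla D\|_{L^2(B_1)}+\|\nabla E\|_{L^2(B_1)}+\|\nabla^2 D\|_{L^1(B_1)}+\|\nabla^2 E\|_{L^1(B_1)}\le C\|\Omega\|_{L^2(B_1)}\|\nabla u\|_{L^2(B_1)}$, which is controlled by $\eta_3\|u\|_{L^1(B_1)}$ up to the Poincar\'e/extension constants (and since $\nabla u$ is also bounded in $L^q$ for all $q<\infty$ by the $f\equiv0$ part of Theorem \ref{optimal_regularity}, these terms are in fact as regular as we like). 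The key new point is the $F$-term: since $f\in L^p(B_1)$ with $p\in(1,2)$, $Af\in L^p(B_1)$ with $\|Af\|_{L^p}\le C\|f\|_{L^p}$, and by the Calder\'on-Zygmund theory $\nabla^2 F = \nabla^2 N[Af]$ satisfies $\|\nabla^2 F\|_{L^p(\R^2)}\le C\|Af\|_{L^p}\le C\|f\|_{L^p(B_1)}$, while $\|\nabla F\|_{L^p(B_1)}\le C\|f\|_{L^p(B_1)}$ too. Finally, $H$ is harmonic on $B_1$, hence smooth in the interior with all derivatives controlled by $\|H\|_{L^1(B_{2/3})}$, which by the triangle inequality and the bounds just listed (together with Lemma \ref{nablauL} bounding $\|\nabla u\|_{L^1(B_{2/3})}$ by $\|u\|_{L^1(B_1)}$) is $\le C(\|u\|_{L^1(B_1)}+\|f\|_{L^p(B_1)})$; so $\|H\|_{L^\infty(B_{1/2})}+\|\nabla H\|_{L^\infty(B_{1/2})}\le C(\|u\|_{L^1(B_1)}+\|f\|_{L^p(B_1)})$.

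Putting these together, on $B_{1/2}$ we have $\nabla u = A^{-1}(H+\nabla D+\nabla F+\nabla^\bot E)$ with $A^{-1}\in W^{1,2}\cap L^\infty$ and $\nabla A^{-1}=-A^{-1}(\nabla A)A^{-1}$ bounded in $L^2$ by $C\|\Omega\|_{L^2}$; differentiating gives $\nabla^2 u = \nabla A^{-1}.(H+\nabla D+\nabla F+\nabla^\bot E)+A^{-1}(\nabla H+\nabla^2 D+\nabla^2 F+\nabla\nabla^\bot E)$. The products $\nabla A^{-1}\cdot H$ and $\nabla A^{-1}\cdot\nabla F$ are estimated by H\"older in $L^2\times L^\infty$ respectively $L^2\times L^{p'}$-type pairings --- here is where the hypothesis $\gamma>\frac{p+1}{2}$ is used, since it guarantees $\gamma'<\frac{2p}{p-1}$ so that the $L^{2\gamma'}$ norm of $\nabla u$ (finite, by the $f\equiv0$ regularity, equivalently $\nabla F\in L^{p}$ with $p>\gamma$) combines with $\nabla A^{-1}\in L^2$ to land in $L^\gamma$ --- while $A^{-1}\nabla^2 D,A^{-1}\nabla\nabla^\bot E\in L^1\subset$ weighted-$L^\gamma$ on the bounded domain and $A^{-1}\nabla^2 F\in L^p\subset L^\gamma$. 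Collecting, $\|\nabla^2 u\|_{L^\gamma(B_{1/2})}\le C(\|f\|_{L^p(B_1)}+\|u\|_{L^1(B_1)})$, and combining with the $W^{1,2}$-bound \eqref{W12est} (and again Poincar\'e/Sobolev on $B_{1/2}$ to recover $\|u\|_{L^1}$ and $\|\nabla u\|_{L^\gamma}$) yields \eqref{alphaest}; undoing the initial normalisation $\bar u=0$ only changes constants. The main obstacle is the bookkeeping of exponents in the cross terms $\nabla A^{-1}\cdot(\cdots)$: one must check that the constraint $\gamma\in(\frac{p+1}{2},p)$ is exactly what makes every term land in $L^\gamma(B_{1/2})$ with norm controlled by the right-hand side, using that $\nabla u$ is a priori known to lie in every $L^q$, $q<\infty$ (from Theorem \ref{optimal_regularity} with $f\equiv0$) to absorb the worst factor.
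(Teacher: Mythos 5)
Your proposal takes a genuinely different route from the paper (which does not revisit the Hodge decomposition at all, but instead applies the Calder\'on--Zygmund estimate of Lemma \ref{CZdp} directly to $-\Delta u=\Omega.\nabla u+f$, combines it with a quantitative Sobolev embedding $\|\nabla v\|_{L^{2\gamma/(2-\gamma)}}\le \frac{C}{2-\gamma}\|v\|_{W^{2,\gamma}}$, and then absorbs the resulting term $\|\Omega\|_{L^2(B_1)}\|\nabla u\|_{L^{2\gamma/(2-\gamma)}(B_1)}$ by running the rescaled estimate over all balls $B_R(x_0)$ with $B_{2R}(x_0)\subset B_1$ and invoking Leon Simon's absorption lemma, Lemma \ref{LS}). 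Unfortunately your route, as written, has genuine gaps.

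The decisive one is the claim that $A^{-1}\nabla^2 D$ and $A^{-1}\nabla\nabla^{\bot}E$ lie in ``$L^1\subset$ weighted-$L^\gamma$ on the bounded domain''. The inclusion goes the wrong way: on a bounded domain $L^\gamma\subset L^1$ for $\gamma>1$, not conversely. The compensated-compactness estimates only give $\nabla^2 D,\nabla^2 E\in L^1$ (via $\nabla^2 N:\mathcal{H}^1\to L^1$), and there is no way to upgrade this to $L^\gamma$ with $\gamma>1$ without first knowing $\nabla^{\bot}\tilde B.\nabla\tilde u\in L^\gamma$, i.e.\ without quantitative higher integrability of $\nabla u$ --- which is precisely what is at stake. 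Relatedly, your appeal to ``the $f\equiv 0$ part of Theorem \ref{optimal_regularity}'' to assert $\nabla u\in L^q$ for all $q<\infty$ is both inapplicable (here $f\not\equiv 0$, so at best one expects $\nabla u\in L^{2p/(2-p)}$) and circular (that theorem is deduced from this lemma). Even granting qualitative integrability from the hypothesis $u\in W^{2,\gamma}(B_1)$, the real difficulty is quantitative: the cross terms produce a contribution of the form $\|\Omega\|_{L^2(B_1)}\|\nabla u\|_{L^{2\gamma/(2-\gamma)}(B_1)}$ with the norm taken over the \emph{larger} ball $B_1$, while the left-hand side only controls $B_{1/2}$, so smallness of $\|\Omega\|_{L^2}$ alone does not permit absorption. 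Handling this is the heart of the paper's proof (the covering/absorption argument via Lemma \ref{LS}), and your proposal does not address it; the phrase ``to absorb the worst factor'' names the problem without supplying the mechanism. A smaller slip: $\|\nabla u\|_{L^2}$ is not controlled by $\|u\|_{L^1}$ ``via Poincar\'e'' --- Poincar\'e runs in the other direction; one needs the interior estimate \eqref{W12est} for that. Finally, the specific role of the constraint $\gamma\in(\frac{p+1}{2},p)$ in the paper is to keep the constants $\frac{C}{\gamma-1}$ and $\frac{C}{2-\gamma}$, and the exponent $\beta=\frac{2\gamma}{2-\gamma}<\frac{2p}{2-p}$ fed into Lemma \ref{LS}, bounded in terms of $p$ alone; your exponent bookkeeping does not reproduce this.
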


Let us assume these two lemmata for the moment and see how 
Theorem \ref{optimal_regularity} follows.
First, Lemma \ref{almost_optimal_regularity} directly applies,
and we deduce $W_{loc}^{2,\gamma}(B_1)$ regularity for 
$\gamma\in [1,p)$.
If we assume that 
$\|\Omega \|_{L^2(B_1)} \leq \eta_3$, then 
\eqref{alphaest} of Lemma \ref{lp} holds
for $\gamma\in (\frac{p+1}{2},p)$
(strictly speaking we should make a small rescaling so that 
we can assume that $u\in W^{2,\gamma}(B_1)$ for 
$\gamma\in [1,p)$)
and by taking the limit $\gamma\nearrow p$, we deduce that
\begin{equation*}
\|u\|_{W^{2, p}(B_{1/2})} \leq 
C (\| f\|_{L^p(B_1)} + \|u\|_{L^1(B_1)}).
\end{equation*}
By an appropriate covering argument, working on balls
small enough so that $\|\Omega \|_{L^2}\leq \eta_3$,
we deduce the regularity claimed in the theorem, and
the claimed estimate \eqref{w2pest}.

It remains therefore to prove Lemmata \ref{almost_optimal_regularity} and \ref{lp}. We will need, in turn,
an additional lemma, which expresses the decay of energy
of solutions $u$.

\begin{lemma}
\label{decay_app}
Under the hypotheses of Lemma \ref{almost_optimal_regularity},
set $\alpha=2(1-1/p)\in (0,1)$.
Then there exists $\eta_4>0$ depending on $p$ and $m$ such that
if $\|\Omega\|_{L^2(B_1)}\leq \eta_4$ then
$$\sup_{x_0\in B_{1/2}, r\in (0,1/2)}
r^{-2\alpha}\|\nabla u\|_{L^2(B_r)}^2<\infty,$$
and in particular, so that
$$\sup_{x_0\in B_{1/2}, r\in (0,1/2)}
r^{-\alpha}
\|\Omega.\nabla u\|_{L^1(B_r)}<\infty.$$
\end{lemma}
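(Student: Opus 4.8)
The statement to prove is Lemma~\ref{decay_app}: a Morrey-type decay $\sup r^{-2\alpha}\|\nabla u\|_{L^2(B_r)}^2<\infty$ with $\alpha=2(1-1/p)$, and the immediate consequence $\sup r^{-\alpha}\|\Omega.\nabla u\|_{L^1(B_r)}<\infty$. The natural engine is the second part of Theorem~\ref{mesest}, which gives a decay estimate of the form
\[
\|\nabla u\|_{L^2(B_r)}^2\leq (1+\delta)r^2\|\nabla u\|_{L^2(B_1)}^2+K_2\big(\|\Omega\|_{L^2(B_1)}^2\|\nabla u\|_{L^2(B_1)}^2+\|f\|_{L^1}\|f\|_{L\ln L}\big),
\]
valid on $B_1$; after the rescaling of Section~\ref{scalingsect} centred at any $x_0\in B_{1/2}$, with radius $R\le 1/2$, this becomes an inequality relating $\|\nabla u\|_{L^2(B_{rR}(x_0))}^2$ to $r^2\|\nabla u\|_{L^2(B_R(x_0))}^2$ plus an inhomogeneous term. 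The key point is that $f\in L^p$ with $p\in(1,2)$ scales well: by item~(4) of Section~\ref{scalingsect}, $\|\hat f\|_{L^p(B_1)}=R^{2(1-1/p)}\|f\|_{L^p(B_R(x_0))}=R^{\alpha}\|f\|_{L^p(B_R(x_0))}$, and since $p>1$ we control $\|\hat f\|_{L^1(B_1)}$ and $\|\hat f\|_{L\ln L(B_1)}$ by $\|\hat f\|_{L^p(B_1)}$ (up to a constant, by Hölder and the $L^p\hookrightarrow L\ln L$ embedding on a bounded domain). Hence the inhomogeneous term in the rescaled estimate is $O(R^{2\alpha})\|f\|_{L^p(B_1)}^2$.

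\textbf{Steps.} First, fix $\delta$ small enough that $(1+\delta)\theta^2<\theta^{2\alpha}$ for some fixed $\theta\in(0,1/4)$; this is possible because $\alpha<1$, so $\theta^{2\alpha}>\theta^2$ strictly, leaving room for a factor $1+\delta$. This choice of $\delta$ fixes $\eta_4$ via the $\eta=\eta(m,\delta)$ of Theorem~\ref{mesest} (also requiring $\eta_4\le\eta_1$ so Theorem~\ref{betest} applies to bound $\|\nabla u\|_{L^2(B_{1/2})}$ a priori; and $K_2\eta_4^2$ absorbable). Second, apply the rescaled decay estimate with $r=\theta$, $R=\rho$ for $\rho\le 1/2$, to obtain
\[
\|\nabla u\|_{L^2(B_{\theta\rho}(x_0))}^2\leq (1+\delta)\theta^2\|\nabla u\|_{L^2(B_\rho(x_0))}^2+K_2\eta_4^2\|\nabla u\|_{L^2(B_\rho(x_0))}^2+C\rho^{2\alpha}\|f\|_{L^p(B_1)}^2.
\]
By the choice of $\delta$ and (after possibly shrinking $\eta_4$) absorbing $K_2\eta_4^2$, the coefficient of $\|\nabla u\|_{L^2(B_\rho(x_0))}^2$ is $\le\theta^{2\alpha}$. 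Third, iterate: set $\phi(\rho):=\rho^{-2\alpha}\|\nabla u\|_{L^2(B_\rho(x_0))}^2$ and deduce $\phi(\theta\rho)\leq \phi(\rho)+C\|f\|_{L^p(B_1)}^2$ — wait, one must be careful: dividing by $(\theta\rho)^{2\alpha}=\theta^{2\alpha}\rho^{2\alpha}$ gives $\phi(\theta\rho)\le \phi(\rho)+C\theta^{-2\alpha}\|f\|_{L^p(B_1)}^2$, so the iteration is additive and would only give linear-in-$\log$ growth. To get a genuine bound one should instead run the standard iteration lemma (e.g.\ \cite[Lemma~3.4 type]{} — or more simply the argument of \cite{helein_conservation}): from $\|\nabla u\|_{L^2(B_{\theta\rho})}^2\le \theta^{2\alpha'}\|\nabla u\|_{L^2(B_\rho)}^2+C\rho^{2\alpha}$ with $\alpha'>\alpha$ (choose $\delta,\theta$ so that the contraction exponent $2\alpha'$ strictly exceeds $2\alpha$), a standard summation over $\rho=\theta^k$ yields $\|\nabla u\|_{L^2(B_\rho(x_0))}^2\le C\rho^{2\alpha}\big(\|\nabla u\|_{L^2(B_{1/2})}^2+\|f\|_{L^p(B_1)}^2\big)$ uniformly in $x_0\in B_{1/2}$ and $\rho\in(0,1/2)$, which is the first claim. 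Fourth, for the second claim write $\|\Omega.\nabla u\|_{L^1(B_r)}\le\|\Omega\|_{L^2(B_r)}\|\nabla u\|_{L^2(B_r)}\le\|\Omega\|_{L^2(B_1)}\cdot Cr^{\alpha}$, giving the stated bound directly.

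\textbf{Main obstacle.} The only delicate point is the exponent bookkeeping: the raw decay estimate from Theorem~\ref{mesest} contracts at rate $r^2$, which is \emph{better} than the target $r^{2\alpha}$ since $\alpha<1$, so there is genuine slack — but one must verify that after (i) introducing the $1+\delta$ loss, (ii) absorbing the $K_2\|\Omega\|_{L^2}^2$ term, and (iii) pushing the inhomogeneous term down to $r^{2\alpha}$ (using that $\|f\|_{L^1},\|f\|_{L\ln L}$ are controlled by $\|f\|_{L^p}$, which costs nothing in the scaling exponent since $p>1$), the resulting contraction exponent can still be taken strictly above $2\alpha$ so that the geometric iteration closes. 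Everything else — the scaling identities, the covering to pass from $B_1$ to $B_{1/2}$, the a priori $W^{1,2}_{loc}$ bound from Theorem~\ref{betest} — is routine. I would also note that $\eta_4$ must be taken as the minimum of the various thresholds ($\eta(m,\delta)$ from Theorem~\ref{mesest} for the chosen $\delta$, $\eta_1$ from Theorem~\ref{betest}, and a bound making $K_2\eta_4^2$ small), all depending only on $p$ and $m$ as required.
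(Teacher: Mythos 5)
Your proposal is correct and follows essentially the same route as the paper: apply the second decay estimate \eqref{est2} after rescaling, observe that the inhomogeneity scales as $R^{2\alpha}\|f\|_{L^p}^2$, choose $\delta$ (and absorb $K_2\|\Omega\|_{L^2}^2$) so that the contraction factor is strictly smaller than $\theta^{2\alpha}$, and solve the resulting geometric recursion; the paper does exactly this with $\theta=\tfrac12$, $\lambda=\tfrac{1+2\delta}{4}<2^{-2\alpha}=\Lambda$. Your self-correction about needing a contraction exponent strictly above $2\alpha$ (rather than equal to it) is precisely the point the paper's condition $\lambda<\Lambda$ encodes, and the final Cauchy--Schwarz step for the second claim matches as well.
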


\begin{proof} (Lemma \ref{decay_app}.)
For reasons that will become apparent, choose $\delta\in (0,1]$
sufficiently small so that 
\begin{equation}
\label{delta_small}
\lambda:=\frac{(1+2\delta)}{4}<2^{-4(1-1/p)}=:\Lambda\in \left(\frac{1}{4},1\right)
\end{equation}
We can now choose 
$\eta_4:=\min\{\eta,\sqrt{\frac{\delta}{4K_2}}\}$, where
$\eta$ is from the
second part of Theorem \ref{mesest}, depending on the $\delta$
we have just chosen and therefore on $p$ (as well as $m$) and 
where $K_2$ is also from the second part of
Theorem \ref{mesest}.
Now take an arbitrary point $x_0\in B_{1/2}$ and any
$R\in (0,1/2)$.
Estimate \eqref{est2} in the case that $r=\frac{1}{2}$,
applied to the rescaled quantities defined in Section 
\ref{scalingsect} yields
\begin{equation}
\begin{aligned}
\|\nabla \hat u \|_{L^2(B_{1/2})}^2 &\leq  
\frac{(1+\delta)}{4} \|\nabla \hat u\|_{L^2(B_1)}^2 +
K_2\left( \|\hat\Omega\|_{L^2(B_1)}^2 \|\nabla \hat u\|_{L^2(B_1)}^2 +  \|\hat f\|_{L^1(B_1)} \|\hat f\|_{L\ln L(B_1)}\right)\\
&\leq
\lambda \|\nabla \hat u\|_{L^2(B_1)}^2 +
C\|\hat f\|_{L^p(B_1)}^2.
\end{aligned}\nonumber
\end{equation}
Reversing the scaling, using Section \ref{scalingsect},
we find that
\begin{equation}
\begin{aligned}
\|\nabla u \|_{L^2(B_{R/2}(x_0))}^2 &\leq  
\lambda \|\nabla u\|_{L^2(B_R(x_0))}^2 +
CR^{4(1-1/p)}\|f\|_{L^{p}(B_R(x_0))}^2\\
&\leq 
\lambda \|\nabla u\|_{L^2(B_R(x_0))}^2 +
[C\|f\|_{L^{p}(B_1)}^2] R^{4(1-1/p)}.
\end{aligned}\nonumber
\end{equation}
Now applying what we have proved
for $R=2^{-k}$, with $k\in \{1,2\ldots\}$
and using the abbreviation
$a_k:=\|\nabla u \|_{L^2(B_{2^{-k}}(x_0))}^2$, we find 
that
\begin{equation}
\begin{aligned}
a_{k+1}&\leq \lambda a_k + K_3 2^{-4(1-1/p)k}\\
&= \lambda a_k + K_3 \Lambda^k
\end{aligned}\nonumber
\end{equation}
where $K_3$ is independent of $x_0$.
This recursion relation can be solved to yield
$$a_{k+1}\leq \lambda^k a_1
+K_3\Lambda\frac{(\Lambda^k-\lambda^k)}{\Lambda-\lambda},$$
and by \eqref{delta_small}, this simplifies to
$$\|\nabla u \|_{L^2(B_{2^{-k}}(x_0))}^2=:
a_k\leq C\Lambda^k.$$
Thus, for $r\in (0,1/2]$ we have
$$\|\nabla u \|_{L^2(B_r(x_0))}^2\leq Cr^{4(1-1/p)},$$
and hence the lemma is proved.
\end{proof}

\begin{proof} (Lemma \ref{almost_optimal_regularity}.)
Our goal is to prove $W^{2,\gamma}$ regularity, for all
$\gamma\in [1,p)$.
By applying
Calderon-Zygmund theory
directly to the equation \eqref{eqn2},
this will follow if we can show that 
$\Omega.\nabla u \in L^s$ for all $s\in [1,p)$.

By the nature of what we are trying to prove, we may also 
assume that $\|\Omega\|_{L^2(B_1)}\leq\eta_4$, where
$\eta_4$ is from Lemma \ref{decay_app}. If this is not true,
would could apply the result, after rescaling, on appropriate 
small balls where it is true.

By Lemma \ref{decay_app}, and the first part of 
Lemma \ref{adams_lemma} with
$h$ chosen to be $\Omega.\nabla u$, we can deduce that for any 
$q\in (1,\frac{2-\alpha}{1-\alpha})=(1,\frac{2}{2-p})$, 
we have 
$$I_1h\in L^q(B_{1/4}),$$
and in particular, this implies that 
$u\in W^{1,q}(B_{1/4})$ for some $q>2$. 
If we apply the same result over appropriate smaller balls,
we have in fact that $u\in W_{loc}^{1,q}(B_1)$ for some $q>2$.
At that point, we know 
that $\Omega.\nabla u\in L_{loc}^s(B_1)$ with 
$s=\frac{2q}{2+q}\in (1,\frac{2}{3-p})\subset(1,p)$.

We will now show that from here it is possible to carry out a 
geometric bootstrapping argument by using the second part of Lemma \ref{adams_lemma}.
As an aside, we note that a bootstrapping argument
using only classical Calderon-Zygmund methods does not work.
The bootstrapping claim is that 
\begin{equation}
\label{bootstrap}
\Omega.\nabla u\in L_{loc}^s(B_1)\text{ with }s \in (1,p)
\implies \Omega.\nabla u\in L_{loc}^{s(\frac{2}{2+s-p})}(B_1).
\end{equation}
This is true because whenever we know that
$\Omega.\nabla u\in L_{loc}^s(B_1)$ with 
$s \in (1,p)$, then 
we can deduce from Lemma \ref{decay_app}, and the second part of 
Lemma \ref{adams_lemma} with $h$ chosen to be $\Omega.\nabla u$,
that 
$$I_1h\in L^{s\frac{2-\alpha}{1-\alpha}}(B_{1/4}),$$
and recalling that 
$\frac{2-\alpha}{1-\alpha}=\frac{2}{2-p}$,
this is enough to establish this time that
$u\in W^{1,\frac{2s}{2-p}}(B_{1/4})$ 
and hence
$\Omega.\nabla u\in L^{s(\frac{2}{2+s-p})}(B_{1/4})$.
Again, by a simple covering argument, we deduce that
$\Omega.\nabla u\in L_{loc}^{s(\frac{2}{2+s-p})}(B_1)$
as desired.

By iterating the bootstrapping claim \eqref{bootstrap},
we find that 
$\Omega.\nabla u\in L_{loc}^s(B_1)$
for all $s \in [1,p)$, and the proof is complete.
\end{proof}

It remains to prove Lemma \ref{lp}.

\begin{proof} (Lemma \ref{lp}.)
We begin by applying the Calderon-Zygmund estimate from Lemma \ref{CZdp}, giving some $C$ independent of $\gamma$ such that
\begin{equation*}
\label{CZestimate}
\|u\|_{W^{2,\gamma}(B_{1/2})}\leq \frac{C}{\gamma -1}(\|\Delta u\|_{L^\gamma(B_{2/3})}+\|u\|_{L^\gamma(B_{2/3})}),
\end{equation*}
valid for $\gamma\in (1,2]$ and $u\in W^{2,\gamma}(B_1)$.

For the specific $u$ of the lemma, and 
$\gamma\in (\frac{1+p}{2},p)$, we may then compute
(also using the inequalities of Sobolev and H\"older,
and Lemma \ref{nablauL})
\begin{eqnarray}\label{CZ}
\|u\|_{W^{2,\gamma}(B_{1/2})} 
&\leq &C (\|\Omega .\nabla u\|_{L^\gamma(B_{2/3})}
 + \|f\|_{L^\gamma(B_{2/3})} + \|u\|_{L^2(B_{2/3})})\nonumber \\ 
&\leq& C (\|\Omega .\nabla u\|_{L^\gamma(B_{2/3})} + \|f\|_{L^\gamma(B_{2/3})} + \|u\|_{W^{1,1}(B_{2/3})})\nonumber \\ 
&\leq& C (\|\Omega \|_{L^2(B_1)} \|\nabla u\|_{L^{\frac{2\gamma}{2-\gamma}}(B_1)} + \|f\|_{L^\gamma(B_1)} + \|u\|_{L^1(B_1)}).
\end{eqnarray}
where $C$ depends only on $p$ and $m$.
Now consider any $v \in W^{2,t}(B_{1/2})$ for $t\in [1, 2)$. 
We can find a $t$-independent extension operator 
$Ex: W^{2,t}(B_{1/2}) \rightarrow W^{2,t}(\R^2)$ 
whose images have compact support in $B_1$, so that there 
exists some $C<\infty$ independent of $t\in [1,2)$ 
such that (denoting $Ex(v) = \tilde v$)
\begin{equation}\label{Ex}
\|\tilde v\|_{W^{2,t}(\R^2)} \leq C \|v\|_{W^{2,t}(B_{1/2})}
\end{equation}
with $v=\tilde v$ in $B_{1/2}$. 
(See for instance \cite[Theorem 7.25]{gt}.)

From here we apply the standard Sobolev embedding for $\nabla \tilde v \in W^{1,t} \hookrightarrow L^{\frac{2t}{2-t}}$ to obtain (see \cite[Theorem 7.10]{gt})
\begin{equation*}
\|\nabla \tilde v\|_{L^{\frac{2t}{2-t}}} \leq \frac{t}{2-t} \|\nabla^2 \tilde v \|_{L^t}.
\end{equation*}

This, coupled with (\ref{Ex}) gives us 
\begin{equation}\label{Sobolev}
\|\nabla v\|_{L^{\frac{2t}{2-t}}(B_{1/2})} \leq \frac{C}{2-t} \|v\|_{W^{2,t}(B_{1/2})}
\end{equation}
for all $v \in W^{2,t}(B_{1/2})$, $t\in [1, 2)$ and where $C$ is independent of $t$. 

Using (\ref{CZ}) and (\ref{Sobolev}) we have that there exists some $C  < \infty$ depending only on $p$ and $m$ such that 
\begin{equation}\label{improvedCZ}
\|\nabla u\|_{L^{\frac{2\gamma}{2-\gamma}}(B_{1/2})} \leq  C(\|\Omega \|_{L^2(B_1)} \|\nabla u\|_{L^{\frac{2\gamma}{2-\gamma}}(B_1)} + \|f\|_{L^\gamma(B_1)} + \|u\|_{L^1(B_1)})
\end{equation}
for all $\gamma\in (\frac{1+p}{2} ,p)$.

We now choose $B_R(x_0) \subset B_1$; using the scaling
of Section \ref{scalingsect}, the estimate \eqref{improvedCZ}
yields
\begin{equation*}
\|\nabla \hat u\|_{L^{\frac{2\gamma}{2-\gamma}}(B_{1/2})} \leq  C(\|\hat{\Omega} \|_{L^2(B_1)} \|\nabla  \hat u\|_{L^{\frac{2\gamma}{2-\gamma}}(B_1)} + \|\hat f\|_{L^\gamma(B_1)} + \|\hat u\|_{L^1(B_1)})
\end{equation*}
which translates (using $R\leq 1$ and Section \ref{scalingsect})
to 
\begin{equation*}
\|\nabla  u\|_{L^{\frac{2\gamma}{2-\gamma}}(B_{R/2}(x_0))} \leq  C(\|\Omega \|_{L^2(B_R(x_0))} \|\nabla u\|_{L^{\frac{2\gamma}{2-\gamma}}(B_R(x_0))}
 + R^{-3}(\| f\|_{L^\gamma(B_R(x_0))} +\|u\|_{L^1(B_R(x_0))})).
\end{equation*}

Now let $\beta= \frac{2\gamma}{2-\gamma}$ and raise this inequality to the power $\beta$ to obtain (noticing $\beta < \frac{2p}{2-p}$)
\begin{eqnarray*}
\|\nabla  u\|_{L^{\beta}(B_{R/2}(x_0))}^{\beta} &\leq&  K\|\Omega \|_{L^2(B_R(x_0))}^{\beta} \|\nabla u\|_{L^{\beta}(B_R(x_0))}^{\beta}\\
 &+& R^{-\frac{6p}{2-p}}K \left(\| f\|_{L^\gamma(B_R(x_0))} + \|u\|_{L^1(B_R(x_0))})\right)^{\beta}
\end{eqnarray*}
for some specific $K<\infty$ depending only on $p$ and $m$.

We now wish to apply Lemma \ref{LS}: We are able to choose $\eta_3 = \left(\frac{\epsilon_0}{K}\right)^{\frac{1}{\beta}}$ where $\epsilon_0$ is that of Lemma \ref{LS} corresponding to the choice $k = \frac{6p}{2-p}$. Let $\Gamma =K \left( \| f\|_{L^\gamma(B_1)} + \|u\|_{L^1(B_1)})\right)^{\beta}$.  

The above estimate holds in particular for any $B_R(x_0)$ such that $B_{2R}(x_0)\subset B_1$, so we have
\begin{equation*}
\|\nabla  u\|_{L^{\beta}(B_{R/2}(x_0))}^{\beta} \leq \epsilon_0 \|\nabla u\|_{L^{\beta}(B_R(x_0))}^{\beta} +R^{-\frac{6p}{2-p}}\Gamma.
\end{equation*}
Hence a direct application of Lemma \ref{LS} gives us 
\begin{equation*}
\|\nabla  u\|_{L^{\frac{2\gamma}{2-\gamma}}(B_{1/2})} \leq C(\| f\|_{L^{p}(B_1)} + \|u\|_{L^1(B_1)}).
\end{equation*}
Therefore a simple covering argument yields that there is a $C=C(p,m) < \infty$ such that 
\begin{equation*}
\|\nabla  u\|_{L^{\frac{2\gamma}{2-\gamma}}(B_{2/3})} \leq C (\| f\|_{L^{p}(B_1)} + \|u\|_{L^1(B_1)}).
\end{equation*}
and from here another application of Lemmata \ref{CZdp} and \ref{nablauL}
tells us that there is a $C=C(p,m) < \infty$ such that
\begin{equation*}
\|u\|_{W^{2,\gamma}(B_{1/2})} \leq C (\| f\|_{L^{p}(B_1)} + \|u\|_{L^1(B_1)}).
\end{equation*}
\end{proof}

\section{$L\ln L$ cannot be replaced by $h^1$}
\label{H1doesntwork}
Here we present a counterexample to the compactness Theorem \ref{imp} when we allow $f_n \in h^1(B_1)$. 
Our example will have $\Omega_n \equiv 0$ for all $n$ and $u_n : B_1 \rightarrow \mathbb{S}^2$ will be a sequence of harmonic maps with bounded energy that undergoes bubbling. 

Let  $\pi : \R^2 \rightarrow \mathbb{S}^2$ be the (inverse of) stereographic projection and take $u_n (x,y) = \pi (nx, ny)$.
Since $u_n$ is harmonic for all $n$ we know it solves (see \cite{helein_conservation})
 \begin{equation*}\label{un}
 -\Delta u_n = \nabla^{\bot} B_n . \nabla u_n 
 \end{equation*}
 where $\nabla^{\bot} (B_n)^i_j= (u_n)^i\nabla (u_n)^j - (u_n)^j\nabla (u_n)^i$, therefore $\|\nabla^{\bot}B_n\|_{L^2(\R^2)} \leq C\|\nabla u_n\|_{L^2(\R^2)}$.
 Letting $f_n = \nabla^{\bot} B_n . \nabla u_n $ we have 
$$\|f_n\|_{\mathcal{H}^1(\R^2)} \leq C\|\nabla u_n \|_{L^2(\R^2)}^2 \leq C\|\nabla \pi \|_{L^2(\R^2)}^2 = \Lambda <\infty.$$ 
This tells us two things: first that $\|f_n\|_{h^1(B_1)} \leq \Lambda < \infty$ and second that $\|u_n\|_{W^{1,2}(B_1)} \leq \Lambda < \infty$. At this point we have all the hypotheses of the theorem (except that we allow $f_n \in h^1$), but it is easy to see that there can be no subsequence converging locally strongly in $W^{1,2}$ because this sequence forms a bubble at the origin.


\appendix
\section{Background and supporting results}

\subsection{Singular Integrals}\label{sing}

We recall here the basics of Calderon-Zygmund theory 
on the unit ball $B_1\subset \R^2$, following 
\cite{gt}. 
Define the Newtonian potential operator $N$ on functions $f\in L^1(B_1)$
(implicitly extended to be zero to $\mathbb{R}^2\backslash B_1$) by 
\begin{equation*}
 N[f](x):= (\Gamma \ast f) (x) = \int \Gamma(x-y) f(y) \,dy 
\end{equation*}
where $\Gamma (x) =\frac{1}{2\pi}\ln  |x|$.
If $f\in C_c^{\infty}$ then $\Delta N[f] = f$.
Writing $w=N[f]$ we have 
\begin{enumerate}
\item 
$N : L^p(B_1) \rightarrow L^p(B_1)$ is a bounded operator for all $1\leq p \leq \infty$.
\item
$\nabla w = \nabla N[f] = (\nabla \Gamma) \ast f$. We will frequently view $\nabla N$ as an operator in its own right. It is easy to see that $|\nabla N[f]| \leq \frac{1}{2\pi}I_1[|f|]$ where $I_1$ is the standard notation for this Riesz potential (defined by convolution with $\frac{1}{|x|}$) and $I_1 : L^p(B_1) \rightarrow L^{\frac{2p}{2-p}}(B_1)$ is a bounded operator for all $1<p<2$. 
\item (Calderon-Zygmund)
Let $f \in L^p(B_1)$, $1<p<\infty$ and $w=N[f]$. Then $\nabla^2 w = \nabla^2 N[f] = (\nabla^2 \Gamma) \ast f$ and (as above we see $\nabla^2 N$ as an operator) $\nabla^2 N : L^p(B_1) \rightarrow L^p(B_1)$ is bounded for $p$ in this range. 
More explicitly we have $w \in W^{2,p}(B_1)$, $\Delta w = f$ almost everywhere, and 
\begin{equation}\label{CZ1}
\|\nabla^2 w\|_{L^p(B_1)} \leq C(p) \|f\|_{L^p(B_1)}.
\end{equation}
\end{enumerate}
In fact, revisiting a second time the proof of the 
Calderon-Zygmund estimates (e.g. \cite[\S 9.4]{gt}, but
interpolating between $q=1$ and $r=3$) we find that
the dependency of $C$ in \eqref{CZ1} can be weakened,
and one can prove: 

\begin{lemma}\label{CZdp}
Let $\gamma \in (1,2]$ and suppose $u \in W^{2,\gamma}(B_1)$. Then there exists some $C< \infty$ independent of $\gamma$ such that
\begin{equation*}
\|u\|_{W^{2,\gamma}(B_{1/2})} \leq \frac{C}{\gamma -1}(\|\Delta u\|_{L^{\gamma}(B_{2/3})} + \|u\|_{L^{\gamma}(B_{2/3})})
\end{equation*}
\end{lemma}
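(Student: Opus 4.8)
The plan is to bypass the cutoff-and-iteration argument of \cite[\S 9.4]{gt} and instead split $u$ on $B_{2/3}$ into a Newtonian potential plus a harmonic remainder, so that the only place where $\gamma$ enters a constant is through the operator norm of the singular integral $\nabla^2 N$ on $L^\gamma$. Precisely: since $u\in W^{2,\gamma}(B_1)$ we have $\Delta u\in L^\gamma(B_1)$; let $g$ be the function equal to $\Delta u$ on $B_{2/3}$ and to $0$ on $B_1\setminus B_{2/3}$, set $w:=N[g]$ and $h:=u-w$. By Appendix \ref{sing}, $\Delta w=g=\Delta u$ almost everywhere in $B_{2/3}$, so $h$ is weakly harmonic, hence (Weyl's lemma) smooth and harmonic in $B_{2/3}$; the standard interior derivative estimates for harmonic functions then give $\|\nabla h\|_{L^\infty(B_{1/2})}+\|\nabla^2 h\|_{L^\infty(B_{1/2})}\le C\|h\|_{L^1(B_{7/12})}$ with $C$ absolute, the relevant distance ($1/12$) being fixed. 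Moreover $N$ and $\nabla N$ are bounded on $L^\gamma(B_1)$ with $\gamma$-independent norms (Young's inequality with the $L^1$ kernels $\Gamma$ and $\nabla\Gamma$ over a fixed bounded set; cf.\ Appendix \ref{sing}), so $\|w\|_{W^{1,\gamma}(B_1)}\le C\|\Delta u\|_{L^\gamma(B_{2/3})}$, and therefore, using H\"older on the fixed domain $B_{7/12}$ (whose $L^\gamma\hookrightarrow L^1$ constant $|B_{7/12}|^{1-1/\gamma}$ is bounded for $\gamma\in(1,2]$), $\|h\|_{L^1(B_{7/12})}\le C\big(\|u\|_{L^\gamma(B_{2/3})}+\|\Delta u\|_{L^\gamma(B_{2/3})}\big)$ with $C$ absolute.

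It remains to control $\|\nabla^2 w\|_{L^\gamma(B_{1/2})}\le\|\nabla^2 N[g]\|_{L^\gamma(B_1)}$, and the content of the lemma is that this is $\le\tfrac{C}{\gamma-1}\|\Delta u\|_{L^\gamma(B_{2/3})}$ with $C$ absolute for $\gamma\in(1,2]$. For this I would revisit the Calder\'on--Zygmund theory behind Appendix \ref{sing}: the kernel $\nabla^2\Gamma$ satisfies the usual homogeneity and cancellation conditions, which yield that $\nabla^2 N$ is of weak type $(1,1)$ with an absolute constant $A_0$ and of strong type $(3,3)$ with an absolute constant $A_1$ (the latter being just the case $p=3$ of the $L^p$ bound recorded in Appendix \ref{sing}). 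The Marcinkiewicz interpolation theorem applied with endpoints $1$ and $3$ then gives, for $\gamma\in(1,3)$, a strong $(\gamma,\gamma)$ bound with constant $2\big(\tfrac{\gamma}{\gamma-1}+\tfrac{\gamma}{3-\gamma}\big)^{1/\gamma}A_0^{1-\theta}A_1^{\theta}$, where $\tfrac1\gamma=(1-\theta)+\tfrac{\theta}{3}$. Restricting to $\gamma\in(1,2]$: $\tfrac{\gamma}{3-\gamma}\le2$, the exponent $1/\gamma\in[\tfrac12,1)$ can only decrease the quantity $\tfrac{\gamma}{\gamma-1}+\tfrac{\gamma}{3-\gamma}\ge1$, $\tfrac{\gamma}{\gamma-1}\le\tfrac{2}{\gamma-1}$, and $A_0^{1-\theta}A_1^{\theta}\le\max\{A_0,A_1,A_0A_1\}$, so the constant is $\le\tfrac{C}{\gamma-1}$ with $C$ absolute.

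Assembling: $\|\nabla^2 u\|_{L^\gamma(B_{1/2})}\le\|\nabla^2 w\|_{L^\gamma(B_{1/2})}+|B_{1/2}|^{1/\gamma}\|\nabla^2 h\|_{L^\infty(B_{1/2})}\le\tfrac{C}{\gamma-1}\|\Delta u\|_{L^\gamma(B_{2/3})}+C\|h\|_{L^1(B_{7/12})}$, and similarly $\|\nabla u\|_{L^\gamma(B_{1/2})}\le\|\nabla w\|_{L^\gamma(B_{1/2})}+|B_{1/2}|^{1/\gamma}\|\nabla h\|_{L^\infty(B_{1/2})}\le C\|\Delta u\|_{L^\gamma(B_{2/3})}+C\|h\|_{L^1(B_{7/12})}$, while $\|u\|_{L^\gamma(B_{1/2})}\le\|u\|_{L^\gamma(B_{2/3})}$; substituting the bound for $\|h\|_{L^1(B_{7/12})}$ from the first paragraph gives $\|u\|_{W^{2,\gamma}(B_{1/2})}\le\tfrac{C}{\gamma-1}\big(\|\Delta u\|_{L^\gamma(B_{2/3})}+\|u\|_{L^\gamma(B_{2/3})}\big)$ with $C$ absolute, as required. (A routine density argument lets one assume $u\in C^\infty$ for the integrations by parts; $u\in W^{2,\gamma}(B_1)$ is in any case enough.)

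The step I expect to be the only genuine obstacle — and the reason this is not simply quoted from \cite{gt} — is the bookkeeping around the $(1,1)$ and $(3,3)$ bounds for $\nabla^2 N$: one must check that both can be taken with absolute constants, so that the $\gamma$-dependence is confined to the Marcinkiewicz exponents, and then that those exponents degenerate only like $(\gamma-1)^{-1}$ when $\gamma$ is restricted to $(1,2]$ rather than worse. Everything else is routine once the geometry is fixed.
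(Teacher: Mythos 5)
Your proof is correct, and its analytic heart is exactly what the paper prescribes: the paper gives no written proof of Lemma \ref{CZdp} beyond the remark that one should revisit the Calder\'on--Zygmund argument of \cite[\S 9.4]{gt} ``interpolating between $q=1$ and $r=3$'', which is precisely your Marcinkiewicz step confining the blow-up of the operator norm of $\nabla^2 N$ to a factor $(\gamma-1)^{-1}$ on $(1,2]$. Where you genuinely depart from the cited source is the localization: \cite[\S 9.4]{gt} obtains the interior estimate by applying the potential estimate to $\eta u$ for a cutoff $\eta$ and absorbing the intermediate gradient term via an interpolation inequality, whereas you split $u$ on $B_{2/3}$ into the Newtonian potential of $\chi_{B_{2/3}}\Delta u$ plus a harmonic remainder controlled by interior derivative estimates on nested balls of fixed separation. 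Your version is arguably the cleaner way to prove the lemma as stated, since it isolates the $\gamma$-dependence entirely in the single singular-integral bound and spares you from checking that the constants in the cutoff and interpolation inequalities of \cite{gt} are themselves uniform in $\gamma$ near $1$ --- a point the paper's one-line remark quietly leaves to the reader.
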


Of course, even in the $L^1$ case, we have sub-optimal estimates
such as:

\begin{lemma}\label{nablauL}
If $u\in L^1(B_1)$ is a weak solution to 
\begin{equation*}
 -\Delta u  =f\in L^1(B_1),
\end{equation*}
then $u\in W^{1,1}(B_{2/3})$ and there exists $C<\infty$ such that
\begin{equation*}
 \|\nabla u \|_{L^1(B_{2/3})} \leq C ( \|f \|_{L^1(B_1)} + \|u\|_{L^1(B_1)}).
\end{equation*}
\end{lemma}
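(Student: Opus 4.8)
\textbf{Proof proposal for Lemma \ref{nablauL}.}

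The plan is to split $u$ into the Newtonian potential of $f$, which carries the source term, and a harmonic remainder, which is controlled purely by interior estimates. First I would set $w := N[f]$, the Newtonian potential of $f$ (extended by zero outside $B_1$ as in Appendix \ref{sing}). By property (2) of the Newtonian potential, $\nabla w = (\nabla\Gamma)\ast f$ and $|\nabla w|\leq \frac{1}{2\pi}I_1[|f|]$ pointwise; since $I_1$ maps $L^1(B_1)$ boundedly into the Lorentz space $L^{2,\infty}(B_1)$ (the Riesz potential statement in Appendix \ref{sing}/\ref{misc}), and $L^{2,\infty}(B_1)\hookrightarrow L^1(B_1)$ on the bounded domain $B_1$, we get $\nabla w\in L^1(B_1)$ with $\|\nabla w\|_{L^1(B_1)}\leq C\|f\|_{L^1(B_1)}$. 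Likewise $\|w\|_{L^1(B_1)}\leq C\|f\|_{L^1(B_1)}$ by boundedness of $N$ on $L^1$ (property (1)). Since $f\in L^1(B_1)$ and $\Delta w = f$ in the distributional sense, $w\in W^{1,1}(B_1)$.

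Next I would set $h := u - w$, so that $h\in L^1(B_1)$ and $-\Delta h = 0$ weakly in $B_1$; by Weyl's lemma $h$ is smooth and harmonic in $B_1$. Standard interior estimates for harmonic functions (exactly as used for $H$ earlier in the excerpt, e.g. \eqref{nablaH}) give $\|\nabla h\|_{L^\infty(B_{2/3})}\leq C\|h\|_{L^1(B_{3/4})}\leq C\|h\|_{L^1(B_1)}$, and hence $\|\nabla h\|_{L^1(B_{2/3})}\leq C\|h\|_{L^1(B_1)}\leq C(\|u\|_{L^1(B_1)}+\|w\|_{L^1(B_1)})\leq C(\|u\|_{L^1(B_1)}+\|f\|_{L^1(B_1)})$. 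Adding the two contributions, $u = w + h\in W^{1,1}(B_{2/3})$ with
\begin{equation*}
\|\nabla u\|_{L^1(B_{2/3})}\leq \|\nabla w\|_{L^1(B_1)} + \|\nabla h\|_{L^1(B_{2/3})} \leq C(\|f\|_{L^1(B_1)} + \|u\|_{L^1(B_1)}),
\end{equation*}
which is the claim.

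There is no serious obstacle here; the only point requiring a little care is that one genuinely does need the $L^{2,\infty}$ (weak-$L^2$) bound for $I_1$ on $L^1$ rather than a naive $L^1\to L^1$ bound for $\nabla N$ — the kernel $1/|x|$ is not integrable enough near the diagonal to give the latter directly, but the weak-type estimate followed by the embedding $L^{2,\infty}(B_1)\hookrightarrow L^1(B_1)$ on the bounded domain repairs this. One should also be slightly careful that the intermediate radii ($B_{3/4}$, say) are chosen so that the harmonic interior estimate and the final covering of $B_{2/3}$ go through, but this is routine.
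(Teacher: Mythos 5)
Your proof is correct. The paper states Lemma \ref{nablauL} in its appendix as a standard background fact and offers no proof of its own, so there is nothing to compare against; your decomposition of $u$ into a Newtonian potential of $f$ plus a harmonic remainder, combined with the weak-type bound $\nabla N: L^1(B_1)\to L^{2,\infty}(B_1)$ (item 4 of Appendix \ref{misc}), the embedding $L^{2,\infty}(B_1)\hookrightarrow L^1(B_1)$ on a bounded domain, Weyl's lemma, and interior estimates for the harmonic part, is exactly the standard argument one would supply. Two small remarks. First, with the paper's convention $\Delta N[f]=f$ you should take $w=-N[f]$ (equivalently $w=N[-f]$) so that $-\Delta w=f$ and $h=u-w$ is genuinely harmonic; as written you would get $-\Delta h=2f$. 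This is purely a sign convention and changes none of the estimates. Second, your closing caveat is backwards: in $\R^2$ the kernel $|\nabla\Gamma(x)|=\frac{1}{2\pi|x|}$ \emph{is} locally integrable (indeed $\int_{B_R}|x|^{-1}\,dx=2\pi R$), so the naive Young's-inequality bound $\|\nabla\Gamma\ast f\|_{L^1(B_1)}\leq\|\nabla\Gamma\|_{L^1(B_2)}\|f\|_{L^1(B_1)}$ does work directly for $f$ supported in $B_1$; the detour through $L^{2,\infty}$ is perfectly valid but not actually necessary here.
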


We now present a theorem of Adams giving improved estimates on the Riesz potential $I_1$ if, in addition we have a decay estimate on our function.
Given $h\in L^1(B_1,\R^m)$ define a new function $I_1 h$ to be 
the convolution of $|h|$, extended to be zero on 
$\R^2\backslash B_1$, with $|x|^{-1}$, i.e.
$$I_1 h(x)=\int_{B_1}\frac{|h(y)|}{|x-y|}dy.$$

\begin{lemma} (Adams \cite[Propositions 3.1 and 3.2]{adams_riesz}.)
\label{adams_lemma}
Suppose $h\in L^1(B_1)$ and
$$\sup_{x\in B_{1/2}, r\in (0,1/2)}
r^{-\alpha}\int_{B_r(x)}|h|<\infty,$$
for some $\alpha\in (0,1)$.
Then 
\begin{enumerate}
\item
$I_1h\in L^q(B_{1/4})$
for any $q\in [1,\frac{2-\alpha}{1-\alpha})$;
\item
if in addition $h\in L_{loc}^s(B_1)$ for $s\in (1,\frac{2}{2-\alpha})$, 
then $I_1h\in L^{s\left(\frac{2-\alpha}{1-\alpha}\right)}(B_{1/4})$.
\end{enumerate}
\end{lemma}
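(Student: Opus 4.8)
Since this is Adams' theorem \cite{adams_riesz}, I will only describe the strategy. Write $M:=\sup_{x\in B_{1/2},\,r\in(0,1/2)}r^{-\alpha}\int_{B_r(x)}|h|$, so that $\int_{B_r(x)}|h|\le Mr^\alpha$ for every such ball; we also have $\|h\|_{L^1(B_1)}<\infty$. The plan is entirely local: fix $x\in B_{1/4}$ and split
\begin{equation*}
I_1h(x)=\int_{B_{1/8}(x)}\frac{|h(y)|}{|x-y|}\,dy+\int_{B_1\setminus B_{1/8}(x)}\frac{|h(y)|}{|x-y|}\,dy,
\end{equation*}
bounding the second (``far'') term crudely by $8\|h\|_{L^1(B_1)}$, and decomposing the first (``near'') term dyadically: with $A_j(x):=\{y:2^{-j-4}\le|x-y|<2^{-j-3}\}$, one has $\bigcup_{j\ge0}A_j(x)=B_{1/8}(x)\setminus\{x\}$ and the kernel is $\le 2^{j+4}$ on $A_j(x)$, so $\int_{B_{1/8}(x)}\frac{|h|}{|x-y|}\,dy\le\sum_{j\ge0}2^{j+4}a_j(x)$ with $a_j(x):=\int_{A_j(x)}|h|$. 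The two elementary facts that drive everything are the pointwise Morrey bound $a_j(x)\le M8^{-\alpha}2^{-j\alpha}$ (valid since $A_j(x)\subset B_{2^{-j-3}}(x)\subset B_{1/2}$ for $x\in B_{1/4}$) and, by Fubini, $\|a_j\|_{L^1(B_{1/4})}\le C\|h\|_{L^1(B_1)}2^{-2j}$.

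For part (1) I would interpolate these two bounds via $\|f\|_{L^q}\le\|f\|_{L^1}^{1/q}\|f\|_{L^\infty}^{1-1/q}$ to obtain $\|a_j\|_{L^q(B_{1/4})}\le C\|h\|_{L^1(B_1)}^{1/q}M^{1-1/q}2^{-j(\frac2q+\alpha(1-\frac1q))}$, and then sum using Minkowski's inequality: the resulting series is $\sum_{j\ge0}2^{j(1-\alpha-\frac{2-\alpha}{q})}$, which converges exactly when $q<\frac{2-\alpha}{1-\alpha}$. Together with the far-term bound this yields $I_1h\in L^q(B_{1/4})$ on the stated range, and also explains why the endpoint $q=\frac{2-\alpha}{1-\alpha}$ must be excluded (the Riesz potential of a Morrey function can be $+\infty$ at individual points).

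For part (2) the same dyadic scaffolding is used, but the extra hypothesis $h\in L^s_{loc}$ now supplies, by H\"older on each annulus, the further bound $a_j(x)\le C\|h\|_{L^s(B_{1/2})}2^{-2j(1-1/s)}$. Since $s<\frac{2}{2-\alpha}$ one still cannot naively sum at the target exponent $q^\ast:=s\frac{2-\alpha}{1-\alpha}$, which in fact exceeds both $\frac{2-\alpha}{1-\alpha}$ and the classical Sobolev exponent $\frac{2s}{2-s}$, so it is not reachable by interpolating part (1) with the classical estimate. Instead one first establishes a weak-type bound for $\sum_j2^ja_j$ by choosing, for each height, the dyadic truncation level optimally in the spirit of Hedberg --- playing $a_j(x)\le M8^{-\alpha}2^{-j\alpha}$ against the constraint $\sum_ja_j(x)=\int_{B_{1/8}(x)}|h|\le M8^{-\alpha}$ and against the $L^s$ bound --- and then upgrades this weak estimate to the strong $L^{q^\ast}(B_{1/4})$ bound. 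I expect this endpoint analysis to be the one genuinely delicate point; the rest is the soft dyadic bookkeeping above, and for the details I would refer to \cite{adams_riesz}.
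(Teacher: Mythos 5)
The paper itself offers no proof of this lemma: it is quoted from Adams \cite{adams_riesz} as a background result, so there is no in-paper argument to compare against. Your sketch of part (1) is correct and essentially complete: the far/near splitting, the two bounds $a_j(x)\le M8^{-\alpha}2^{-j\alpha}$ and $\|a_j\|_{L^1(B_{1/4})}\le C\|h\|_{L^1(B_1)}2^{-2j}$ (the latter by Fubini), the $L^1$--$L^\infty$ interpolation of $a_j$, and the geometric series with exponent $1-\alpha-\frac{2-\alpha}{q}$ are exactly the standard route, and the convergence condition reproduces the stated range of $q$.

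For part (2) the mechanism you name (Hedberg-type optimization over the truncation radius, as in Adams) is the right one, but the quantitative ingredient you propose to feed into it fails. The per-annulus H\"older bound $a_j(x)\le C\|h\|_{L^s(B_{1/2})}2^{-2j(1-1/s)}$ decays too slowly: summing $2^j a_j(x)$ over large $j$ with this bound gives $\sum_j 2^{j(2/s-1)}$, which diverges for every $s<2$, hence throughout the hypothesis range $s<\frac{2}{2-\alpha}$. So this bound cannot control the near part of the potential at any truncation level, and no choice of cut-off rescues it; the problem is not merely at the target exponent but already with pointwise convergence of the series. The ingredient that actually closes the argument replaces the global $L^s$ norm by the Hardy--Littlewood maximal function: $a_j(x)\le C\,2^{-2j}Mh(x)$, whence $\sum_{j\ge J}2^{j}a_j(x)\le C2^{-J}Mh(x)$, while the Morrey bound gives $\sum_{j<J}2^{j}a_j(x)\le CM2^{J(1-\alpha)}$. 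Optimizing over $J$ yields the pointwise inequality $I_1h(x)\le C\bigl(M^{1/(2-\alpha)}(Mh(x))^{(1-\alpha)/(2-\alpha)}+\|h\|_{L^1(B_1)}\bigr)$, and since $(1-\alpha)/(2-\alpha)\cdot s\frac{2-\alpha}{1-\alpha}=s$ and $M$ is bounded on $L^s$ for $s>1$, this gives $I_1h\in L^{s(2-\alpha)/(1-\alpha)}(B_{1/4})$ directly --- no weak-type estimate or subsequent upgrading is needed. Your observation that $q^\ast$ beats both the exponent of part (1) and the Sobolev exponent $\frac{2s}{2-s}$ precisely when $s<\frac{2}{2-\alpha}$ is correct and is a good sanity check that something beyond interpolation is required.
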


\subsection{Hardy Spaces}\label{Hardy}

Pick $\phi \in C_c^{\infty}(B_1)$ such that $\int \phi = 1$ and let $\phi_t (x) = t^{-2}\phi(\frac{x}{t})$. For a distribution $f$ we say $f$ lies in the Hardy space $\mathcal{H}^1(\R^2)$ if $f_{\ast} \in L^1(\R^2)$ where 

\begin{equation*}
f_{\ast}(x) = \sup_{t>0} |(\phi_t \ast f ) (x)|
\end{equation*}
with norm $\|f\|_{\mathcal{H}^1(\R^2)}= \|f_{\ast}\|_{L^1(\R^2)}$. Clearly we have the continuous embedding $\mathcal{H}^1(\R^2) \hookrightarrow L^1(\R^2)$. 
The dual space of $\mathcal{H}^1(\R^2)$ is $BMO(\R^2)$ where $BMO:=\{ g \in L^1_{loc}(\R^2) : \sup_{B \subset \R^2} \frac{1}{|B|} \int_B |g - \bar g| < \infty \}$ (see \cite{feff_stein}). 

Related to $\mathcal{H}^1$ is the so-called local Hardy space $h^1$ defined to be those functions for which
 $$f_{\tilde \ast}(x) = \sup_{0<t<1} |(\phi_t \ast f ) (x)| \in L^1(\R^2)$$ 
with corresponding norm. 
Again we clearly have the continuous embedding $h^1(\R^2) \hookrightarrow L^1(\R^2)$. 
For a function $f$ defined in $B_1$ we say that $f \in h^1(B_1)$ if
$$f_{\hat \ast}(x) = \sup_{0<t<1-|x|} |(\phi_t \ast f ) (x)| \in L^1(B_1).$$
By \cite[Theorem 1.92]{semmes_primer} we know that $f \in h^1(B_1)$ if and only if for any $\varphi \in C_c^{\infty}(B_1)$ with $\int \varphi \neq 0$ there is a constant $\lambda$ such that 
$\varphi(f-\lambda) \in \mathcal{H}^1(\R^2)$,
with 
$$\|\varphi(f-\lambda)\|_{\mathcal{H}^1(\R^2)} \leq C \|f\|_{h^1(B_1)},$$
where $C = C(\varphi)$ and $\lambda$ is chosen such that $\int \varphi(f-\lambda) =0$.

The Hardy spaces act as replacements to $L^1$ in Calderon-Zygmund estimates. In particular for $f \in \mathcal{H}^1(\R^2)$,
writing $w=N[f]$, we have the estimate (see \cite{helein_conservation})
\begin{equation}\label{HCZ}
\|\nabla^2 w\|_{L^1(\R^2)} \leq C \|f\|_{\mathcal{H}^1(\R^2)},
\end{equation} 
and if $f \in h^1(\R^2)$ then $\nabla^2 w \in L^1_{loc}(\R^2)$ with
$$\|\nabla^2 w\|_{L^1(B_1)} \leq C\|f\|_{h^1(\R^2)}.$$
Moreover $f \in h^1(B_1)$ implies $\nabla^2 w \in L^1_{loc}(B_1)$. The final two assertions follow from \eqref{HCZ}, \cite[Theorem 1.92]{semmes_primer} and standard elliptic estimates.

\subsection{Lorentz Spaces and $L\ln L$}\label{Lorentz}

For measurable $f$ define, for $s\geq 0$, the distribution function
$\lambda(s)=|\{x : |f|(x) >s\}|$. Assuming $\lim_{s\to\infty}\lambda(s)=0$,
define the nonincreasing rearrangement $f^{\ast}: (0,\infty)\rightarrow [0,\infty)$ by
\begin{equation*}
f^{\ast}(t):= \inf \{s\geq 0 : \lambda(s)\leq t\}.
\end{equation*}
Here we consider the spaces defined by:
\begin{enumerate}
\item
$L^{2,1} := \{f: \int t^{-1/2}f^{\ast}(t)\,\, dt < \infty\}$
\item
$L^{2,\infty} := \{f : \sup_{t>0} t^{1/2} f^{\ast}(t) <\infty \}$
\item
$L\ln L := \{ f : \int f^{\ast}(t) \ln \left( 2+ \frac{1}{t} \right) \,\,dt < \infty \}$
\end{enumerate}
The quantities above are not norms, but the spaces are all Banach spaces whose norms are equivalent to these quantities respectively.  The spaces $L^{2,1}$ and $L^{2,\infty}$ are two examples of Lorentz spaces, which can be thought of as perturbations of the usual $L^p$ spaces. For example the following are all continuous embeddings (see \cite{ziemer})
\begin{equation*}
L^p(B_1) \hookrightarrow L^{2,1}(B_1)  \hookrightarrow L^2(B_1)=L^{2,2}(B_1) \hookrightarrow L^{2,\infty}(B_1) \hookrightarrow L^q(B_1)
\end{equation*}
for all $q<2<p$. 
The dual space of $L^{2,1}$ is 
$L^{2,\infty}$ \cite{hunt}.

For the space $L\ln L$ we have the continuous embeddings 
\begin{equation*}
L^p(B_1) \hookrightarrow L\ln L(B_1)  \hookrightarrow L^1(B_1)
\end{equation*}
for all $p>1$. It is well known \cite{stein_llogl} that $f \in L\ln L$ if and only if its corresponding maximal function is locally integrable, where the maximal function $M_0(f) (x) = \sup_{t>0} \frac{1}{|B_t(x)|}\int_{B_t(x)} |f|$. By extension of $f$ by zero  and comparison of the functions $M_0 (f)$ and $f_{\tilde \ast}$ we see that the following embedding is continuous 
\begin{equation*}
L\ln L(B_1)  \hookrightarrow h^1(\R^2).
\end{equation*}
In fact, if $f \in L \ln L(B_1)$ then $f-\bar f \in \mathcal{H}^1(\R^2)$ with $\|f-\bar f\|_{\mathcal{H}^1(\R^2)} \leq C \|f\|_{h^1(\R^2)} \leq C \|f\|_{L\ln L(B_1)}$, where
\[ f - \bar f  := \left\{ \begin{array}{cl}
         f-\frac{1}{|B_1|}\int_{B_1}f & \text{in }B_1;\\
        0 & \text{in }\R^2\backslash B_1. \end{array} \right. \]

\subsection{Embeddings and Estimates}\label{misc}

Listed below are some important miscellaneous results involving Sobolev spaces and the spaces mentioned above. 
\begin{enumerate}
\item
The embedding $W^{1,1}(\R^2) \hookrightarrow L^{2,1}(\R^2)$ is continuous \cite{helein_conservation}.
\item
Given $u,v \in W^{1,2}(\R^2)$ then $\nabla u .\nabla^{\bot} v \in \mathcal{H}^1(\R^2)$ with \newline $\|\nabla u .\nabla^{\bot} v\|_{\mathcal{H}^1(\R^2)} \leq C\|\nabla u \|_{L^2(\R^2)}\|\nabla v\|_{L^2(\R^2)}$. (See \cite{clms}.)
\item
By 1. and the estimates from Appendices \ref{sing} and \ref{Hardy} on the Newtonian potential we have that the operators $\nabla N : h^1(\R^2) \rightarrow L_{loc}^{2,1}(\R^2)$ and $\nabla N : \mathcal{H}^1(\R^2) \rightarrow L^{2,1}_{loc}(\R^2)$ are bounded. 
\item
$\nabla N : L^1(B_1) \rightarrow L^{2,\infty}(B_1)$ is a bounded operator; this follows by standard estimates on convolutions and the fact that $\nabla \Gamma \in L^{2,\infty}$.
\item
The embedding $W^{1,2}(\R^2) \hookrightarrow BMO(\R^2)$ is continuous.
\end{enumerate}

\subsection{Riviere's gauge}

The key result from Rivi\`ere's work that we will need is the 
existence of the following perturbation of Coulomb's 
gauge.

\begin{lemma}[Rivi\`{e}re]\label{Riv}
 Suppose $\Omega \in L^2(B_1,so(m)\otimes \mathbb{R}^2)$. Then there exists $\epsilon =\epsilon(m)>0$ such that if $\|\Omega\|_{L^2(B_1)} \leq \epsilon$ we can find $A\in W^{1,2}(B_1, GL_m (\mathbb{R}))\cap L^{\infty} (B_1, GL_m (\mathbb{R}))$, $B\in W^{1,2}(B_1, gl_m (\mathbb{R}))$ and $C=C(m)<\infty$ where
\begin{equation*}
 \nabla A - A\Omega = \nabla^{\bot} B
\end{equation*}
and 
\begin{equation*}
 \|\nabla A\|_{L^2(B_1)} + \|\nabla B\|_{L^2(B_1)} + \|dist(A,SO(m)\|_{L^{\infty}(B_1)} \leq C \|\Omega\|_{L^2(B_1)}.
\end{equation*}

\end{lemma}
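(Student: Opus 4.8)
The plan is to follow the proof of Rivi\`ere \cite{riviere_inventiones}, which combines a Coulomb-type gauge fixing with a fixed-point construction in which the antisymmetry of $\Omega$ enters through the compensated-compactness estimates of Coifman--Lions--Meyer--Semmes \cite{clms}. \emph{Step 1: gauge fixing.} First I would invoke Uhlenbeck's small-energy gauge theorem (see \cite{riviere_inventiones}, \cite{helein_conservation}): there is $\epsilon_0=\epsilon_0(m)>0$ such that if $\|\Omega\|_{L^2(B_1)}\le\epsilon_0$ one can find $P\in W^{1,2}(B_1,SO(m))$ with
\[
\mathrm{div}\bigl(P^{-1}\nabla P+P^{-1}\Omega P\bigr)=0 \text{ in } B_1, \qquad \bigl(P^{-1}\nabla P+P^{-1}\Omega P\bigr)\cdot\nu=0 \text{ on } \partial B_1,
\]
and $\|\nabla P\|_{L^2(B_1)}\le C\|\Omega\|_{L^2(B_1)}$; the smallness hypothesis is exactly what keeps the gauge orbit nondegenerate in the continuity/variational argument proving this. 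Since a divergence-free field on the disc with vanishing normal trace is $\nabla^{\bot}$ of a function vanishing on $\partial B_1$, I would write $P^{-1}\nabla P+P^{-1}\Omega P=\nabla^{\bot}\xi$ with $\xi\in W^{1,2}(B_1,so(m))$, $\xi|_{\partial B_1}=0$ and $\|\nabla\xi\|_{L^2(B_1)}\le C\|\Omega\|_{L^2(B_1)}$. The key refinement, obtained from $\Delta\xi=\mathrm{curl}(P^{-1}\nabla P)+\mathrm{curl}(P^{-1}\Omega P)$ together with the Jacobian (hence $\mathcal{H}^1$) structure of the first term via \cite{clms}, is that in fact $\nabla\xi\in L^{2,1}(B_1)$ with $\|\nabla\xi\|_{L^{2,1}(B_1)}\le C\|\Omega\|_{L^2(B_1)}$.

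\emph{Step 2: construction of $A$ and $B$.} Working in the Coulomb gauge, the task becomes to find $\tilde A\in W^{1,2}\cap L^\infty(B_1,GL_m)$ close to $\mathrm{Id}$ and $\tilde B\in W^{1,2}(B_1,gl_m)$ with $\nabla\tilde A-\tilde A\,\nabla^{\bot}\xi=\nabla^{\bot}\tilde B$, from which the lemma is recovered by conjugating back with $P$ (note $\mathrm{dist}(A,SO(m))$ is unchanged under multiplication by $P^{\pm1}\in SO(m)$) and reorganising via one further Hodge decomposition so that $A$ and $B$ genuinely satisfy $\nabla A-A\Omega=\nabla^{\bot}B$. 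I would solve the system for $(\tilde A,\tilde B)$ by a Picard iteration started from $(\mathrm{Id},0)$, at each step inverting the Laplacians of the div- and curl-parts by the Newtonian potential. Every nonlinear term produced is a product of a gradient with $\nabla^{\bot}$ of a $W^{1,2}$ function, hence lies in $\mathcal{H}^1(\R^2)$ with norm bounded by the product of the two $L^2$ norms (\cite{clms}); feeding this into the Hardy-space Calder\'on--Zygmund bound $\|\nabla^2 N[\,\cdot\,]\|_{L^1}\le C\|\,\cdot\,\|_{\mathcal{H}^1}$ and the embedding $W^{2,1}(\R^2)\hookrightarrow L^\infty(\R^2)$ gives $\|\tilde A-\mathrm{Id}\|_{L^\infty}\le C\|\nabla\tilde A\|_{L^2}\|\nabla\xi\|_{L^2}\le C\|\Omega\|_{L^2}^2$, while the $W^{1,2}$ norms of $\tilde A-\mathrm{Id}$ and $\tilde B$ are controlled by $C\|\Omega\|_{L^2}$. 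For $\|\Omega\|_{L^2}$ small this makes the iteration a contraction in $(W^{1,2}\cap L^\infty)\times W^{1,2}$, and since $\|\Omega\|_{L^2}^2\le\epsilon_0\|\Omega\|_{L^2}$ the quadratic $L^\infty$ bound is absorbed into the claimed linear estimate; invertibility of $A$ is then immediate from $\mathrm{dist}(A,SO(m))<1$.

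\emph{Main obstacle.} The genuine difficulty is the $L^\infty$ control and the closeness of $A$ to $SO(m)$: in two dimensions a bare $L^2$ bound on $\nabla A$ gives no pointwise information, and only the compensated-compactness gain — the antisymmetry of $\Omega$ making the Coulomb gauge available, producing the $\nabla^{\bot}\xi$ structure, which in turn forces all nonlinearities into $\mathcal{H}^1$ and hence $\nabla^2 A$ into $L^1$ — closes this gap; this is precisely why the lemma fails without the antisymmetry assumption. The remaining work is bookkeeping: tracking boundary conditions carefully so the Hodge decompositions are exact with no stray harmonic pieces, and verifying that after transferring through $P$ one recovers the first-order identity $\nabla A-A\Omega=\nabla^{\bot}B$ itself rather than merely its second-order consequences.
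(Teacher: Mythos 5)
The paper does not prove this lemma: it is quoted verbatim as background (Appendix, ``Rivi\`ere's gauge'') and attributed to Rivi\`ere \cite{riviere_inventiones}, so there is no in-paper argument to compare yours against. Judged on its own terms, your outline correctly identifies the two ingredients of Rivi\`ere's original proof --- Uhlenbeck's small-energy Coulomb gauge $P\in W^{1,2}(B_1,SO(m))$ with $P^{-1}\nabla P+P^{-1}\Omega P=\nabla^{\bot}\xi$, followed by a fixed-point construction in $(W^{1,2}\cap L^{\infty})\times W^{1,2}$ in which every nonlinearity has Jacobian structure and is handled by Coifman--Lions--Meyer--Semmes/Wente --- and you correctly locate the crux in the $L^{\infty}$ control of $A$, which is exactly what fails without antisymmetry.

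The one step I would not accept as written is the passage back from the gauge. If you really solve $\nabla\tilde A-\tilde A\,\nabla^{\bot}\xi=\nabla^{\bot}\tilde B$ and then set $A=\tilde A P^{-1}$, the identity you obtain is $\nabla A-A\Omega=\nabla^{\bot}\tilde B\,P^{-1}$, whose right-hand side is not curl-free because $P$ is nonconstant; performing ``one further Hodge decomposition'' on it produces an extra gradient term that destroys the exact first-order identity rather than repairing it. Rivi\`ere avoids this by never decoupling: he writes $A=\hat A P^{-1}$ from the outset and solves the coupled div--curl system for $(\hat A,B)$ with the $P$-factors built into both equations, so that all the error terms ($\nabla^{\bot}B\cdot\nabla P$, $\nabla^{\bot}\hat A\cdot\nabla P^{-1}$, etc.) retain Jacobian structure and the contraction closes for the genuine pair $(A,B)$ satisfying $\nabla A-A\Omega=\nabla^{\bot}B$. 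Your sketch flags this as ``bookkeeping,'' but it is the one place where the naive two-step scheme actually breaks and the argument has to be restructured.
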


\subsection{Weak Convergence of Measures and Functions of Bounded Variation}\label{BV}

We consider the space of functions of bounded variation $BV(B)$ for any ball $B \subset \R^2$. $BV$ is defined by 
$BV(B) = \{ V \in L^1(B) : \int_B |\nabla V| :=  \sup_{\phi \in C_0^1 (B, \R^2) \,\,\,\|\phi\|_{L^{\infty}}\leq 1} \int_B V \text{div} \phi  < \infty \}$. In other words it is the space of functions whose distributional derivatives are signed Radon measures with finite total mass. This is a Banach space with norm $\|V\|_{BV(B)} = \|V\|_{L^1(B)} + \int_B |\nabla V|$. It is easy to see that we have the continuous embedding $W^{1,1} \hookrightarrow BV$, moreover we have the continuous embedding $BV(B) \hookrightarrow L^2(B)$ and the compact embeddings $BV(B) \hookrightarrow L^p(B)$ for any $p<2$ (see for instance \cite{ziemer}).

We also use the standard weak-$\ast$ compactness available in the space of signed Radon measures with finite total mass, denoted $M$. 

The proof of the next lemma is essentially taken from \cite[Theorem 9]{evans_weak_convergence} and is similar to that stated in \cite{LZ}. 
For an integrable function $k$ we implicitly view it as both a function and a measure, i.e. $k = k \,dx$. 

\begin{lemma}\label{measure}
Suppose  $\lbrace V_n \rbrace \subset BV(B)$ is a bounded sequence and $B\subset \mathbb{R}^2$ is an open ball. Then there exist at most countable $\lbrace x_j \rbrace \subset B$ and $\lbrace a_j >0 \rbrace$ (where $\sum_j a_j <\infty$) and $V\in BV(B)$ such that (up to a subsequence)
\begin{equation*}
 V_n^2 \rightharpoonup V^2 + \sum_j a_j \delta_{x_j}
\end{equation*}
weakly in $M(B)$.
\end{lemma}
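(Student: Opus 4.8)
The plan is to follow the classical concentration-compactness decomposition of P.-L. Lions, in the form given by Evans for weak convergence of measures, adapted to the $BV$ setting. First I would use the uniform bound on $\{V_n\}$ in $BV(B)$ together with the compact embedding $BV(B)\hookrightarrow L^p(B)$ for $p<2$ to extract a subsequence (not relabelled) such that $V_n\to V$ strongly in $L^1(B)$ and pointwise a.e.\ for some $V\in BV(B)$, and moreover $V_n\rightharpoonup V$ weakly in $L^2(B)$ (using the continuous embedding $BV(B)\hookrightarrow L^2(B)$ to get the uniform $L^2$ bound). Since $\{V_n^2\}$ is bounded in $L^1(B)$, hence a bounded sequence of finite Radon measures, weak-$\ast$ compactness in $M(B)$ gives a further subsequence with $V_n^2\rightharpoonup\mu$ for some nonnegative finite Radon measure $\mu$ on $B$.

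Next I would identify the structure of $\mu$. The pointwise convergence $V_n^2\to V^2$ a.e.\ combined with Fatou's lemma shows that $\mu\ge V^2\,dx$ as measures, so $\nu:=\mu-V^2\,dx$ is a nonnegative finite Radon measure. The key point is that $\nu$ is atomic, i.e.\ supported on a countable set. This is where the $L^2$-type (critical) structure enters: I would invoke the Lions concentration-compactness principle, whose mechanism is that for any $\phi\in C_c^\infty(B)$ one has, by weak $L^2$ convergence of $\phi V_n$ and the Brezis--Lieb-type splitting, that $\int \phi^2\,d\nu=\lim_n\int\phi^2(V_n^2-V^2)$ controls the ``defect'' of the embedding; the absence of a genuine Sobolev gap (we are at the level $W^{1,1}\subset L^2$ in two dimensions, which is critical) forces the defect measure to be carried by points. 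Concretely, I would test with $\phi$ concentrating near a point $x_0$ and use the scaling/Sobolev inequality $\|w\|_{L^2}\le C\|\nabla w\|_{L^1}$ valid in $\R^2$ to show that if $\nu(\{x_0\})=0$ then $\nu$ has no mass in a small ball around $x_0$; hence $\nu=\sum_j a_j\delta_{x_j}$ with $a_j>0$ and $\sum_j a_j=\nu(B)<\infty$, the countability coming from finiteness of total mass.

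Assembling the pieces then gives $V_n^2\rightharpoonup V^2+\sum_j a_j\delta_{x_j}$ weakly in $M(B)$, which is the claim. The main obstacle I expect is justifying the atomic decomposition of the defect measure $\nu$ rigorously in the $BV$ category rather than in $W^{1,2}$: one must be careful that the total variation measures $|\nabla V_n|$ may themselves concentrate, and that the relevant Sobolev-type inequality ($\|w-\bar w\|_{L^2(B)}\le C|\nabla w|(B)$ on balls in $\R^2$, or the sharper $L^{2,1}$-type bound via $W^{1,1}(\R^2)\hookrightarrow L^{2,1}(\R^2)$ recorded in Appendix~\ref{misc}) is applied with the right cutoffs so that the product rule error terms $\|w\nabla\phi\|_{L^1}$ are absorbed. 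Handling these cutoff error terms, and the passage from ``$\nu$ charges only points with positive mass'' to ``$\nu$ is purely atomic,'' is the technical heart; everything else (extraction of subsequences, lower semicontinuity, $\sum a_j<\infty$) is routine.
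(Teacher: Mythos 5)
Your overall strategy is the same as the paper's, which follows Evans's rendition of Lions's second concentration--compactness lemma adapted to $BV$ via the critical embedding $BV(B)\hookrightarrow L^2(B)$ in two dimensions: extract $V$, form the nonnegative defect measure $\nu$, prove a reverse-H\"older inequality, and conclude atomicity. The set-up, the identification $\nu=\mu-V^2\,dx$, the treatment of the cutoff error term via $g_n:=V_n-V\to0$ in $L^1$, and the final cancellation of the cross term $2\int V(V-V_n)\phi$ by weak $L^2$ convergence are all correct.

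The one step that does not work as you state it is the deduction of atomicity. You claim that testing with concentrating cutoffs shows ``if $\nu(\{x_0\})=0$ then $\nu$ has no mass in a small ball around $x_0$''; this is false for a general finite measure (Lebesgue measure has no atoms yet charges every ball), and the Sobolev inequality alone cannot yield it, because the relevant right-hand side $\|\nabla(\phi g_n)\|_{L^1}$ only stays \emph{bounded} on small balls --- it need not tend to zero. The missing object is a second limit measure: after a further subsequence $|\nabla g_n|\rightharpoonup\mu_0$ in $M(B)$ for some finite nonnegative $\mu_0$, since the total variations are uniformly bounded. The cutoff argument, combined with $g_n\to0$ in $L^1$, then gives $\bigl(\int\phi^2\,d\nu\bigr)^{1/2}\le C\int|\phi|\,d\mu_0$ for $\phi\in C_c^1(B)$, hence $\nu(B_r(x))\le C\mu_0(B_r(x))^2$ for all balls and in particular $\nu\ll\mu_0$. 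By the differentiation theorem for Radon measures, $\nu(E)=\int_E D_{\mu_0}\nu\,d\mu_0$ with $D_{\mu_0}\nu(x)=\lim_{r\downarrow0}\nu(B_r(x))/\mu_0(B_r(x))\le C\lim_{r\downarrow0}\mu_0(B_r(x))=C\mu_0(\{x\})$, which vanishes off the at most countable set of atoms of $\mu_0$. Thus atomicity of $\nu$ is inherited from the atoms of the gradient-concentration measure $\mu_0$, not read off from $\nu$ alone. With this repair your argument coincides with the paper's proof.
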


\begin{proof}
Since $\lbrace V_n \rbrace \subset BV(B)$ is a bounded sequence, there exists $V \in L^2$ such that (up to a subsequence) $V_n \rightarrow V$ strongly in $L^p$ for all $p<2$ and $V_n \rightharpoonup V$ weakly in $L^2$. 
Also $\{ \nabla V_n \} \subset M(B)$ is bounded so (again up to a subsequence) $\nabla V_n \rightharpoonup \lambda$ (a vector-valued measure)$ \in M(B)$. In particular, for all $\phi \in C_{c}^{1}(B,\R^2)$ 
\begin{eqnarray*}
\int \phi. \, d\lambda &=& \lim_{n\rightarrow \infty} \int \phi. \nabla V_n \,dx \\
&=& -\lim_{n\rightarrow \infty} \int \text{div}(\phi) V_n \, dx \\
&=& - \int \text{div}(\phi) V \, dx.
\end{eqnarray*}
In other words $V\in BV(B)$ and $\nabla V = \lambda$.

Now set $g_n := V_n - V$. Note that $|\nabla g_n | \in M(B)$ is bounded so for a subsequence $|\nabla g_n | \rightharpoonup \mu \in M(B)$ where $\mu$ is non-negative. 
Similarly (up to a subsequence) $g_n^2 \rightharpoonup \nu \in M(B)$ where $\nu$ is also non-negative. 
We have that for all $\phi \in C_{c}^{1}(B)$, $\phi g_n \in BV(B)$ and by the continuous embedding $BV(B)\hookrightarrow L^2(B)$ we have
\begin{equation*}
 \left( \int (\phi g_n)^2 \, dx \right)^{1/2} \leq C \int |\nabla(\phi g_n)| \, dx
\end{equation*}
and since $g_n \rightarrow 0$ in $L^1$, taking limits gives 
\begin{equation*}
 \left( \int \phi^2 \, d\nu \right)^{1/2} \leq C \int |\phi| \, d\mu.
\end{equation*}
Taking $\phi$ to be an approximation to the characteristic function on  $B_r(x) \subset B$ we get
\begin{equation*}
 \nu (B_r(x)) \leq C(\mu(B_r(x)))^2
\end{equation*}
for all $B_r(x) \subset B$, and in particular $\nu \ll \mu$. 

By standard results for differentiation of measures (see e.g. \cite[\S 1.6 Theorem 2]{evans_gariepy}), for any Borel set $E \subset B$
\begin{equation*}
 \nu(E) = \int_E D_{\mu} \nu \, d\mu 
\end{equation*}
where $D_{\mu} \nu = \lim_{r\downarrow 0} \frac{\nu (B_r(x))}{\mu(B_r(x))}$ is a $\mu$-integrable function (this limit exists $\mu$-almost everywhere). 

Since $\mu$ is a finite, positive Radon measure, there are at most countable points $\{x_j\}$ such that $\mu (\{x_j\}) >0$, and if $\mu (\{x\}) =0$ then
\begin{equation*}
 D_{\mu} \nu (x) = \lim_{r\downarrow 0} \frac{\nu (B_r(x))}{\mu(B_r(x))} \leq C \lim_{r\downarrow 0} \mu(B_r(x)) = 0. 
\end{equation*}
Letting $X:= \cup_j \{x_j\}$ we have  $D_{\mu} \nu = 0$ $\mu$-almost everywhere on $B \backslash X$.
Hence $D_{\mu} \nu$ is a simple function, therefore for Borel $E \subset B$
\begin{equation*}
 \nu(E) = \int_E D_{\mu} \nu \, d\mu = \sum_{\{j:x_j \in E\}} D_{\mu} \nu(x_j) \mu (\{x_j\}).
\end{equation*}
Setting $a_j := D_{\mu} \nu(x_j) \mu (\{x_j\})$ we have $\nu = \sum_j a_j \delta_{x_j}$. 
Now, for $\phi \in C_c^0(B)$
\begin{eqnarray*}
 \sum_j a_j \phi(x_j) &=& \lim_{n\rightarrow \infty} \int g_n^2 \phi \, dx \\
&=& \lim_{n\rightarrow \infty} \int (V_n - V)^2 \phi \, dx \\
&=& \lim_{n\rightarrow \infty} \left( \int (V_n^2 - V^2)\phi \, dx + 2\int V(V-V_n)\phi \, dx \right)
\end{eqnarray*}
where the last term vanishes in the limit since $V_n \rightharpoonup V$ weakly in $L^2$.
\end{proof}

\begin{lemma}[Corollary of Lemma \ref{measure}]\label{cmeasure}
 Suppose $\lbrace V_n \rbrace$ is as in Lemma \ref{measure}. If 
\begin{equation*}
 \lim_{r\downarrow 0} \limsup _{n\rightarrow \infty} \|V_n \|_{L^2(B_{r}(x))} =0
\end{equation*}
for all $x\in B$, then
\begin{equation*}
 V_n \rightarrow V 
\end{equation*}
strongly in $L^{2}_{loc}(B)$ (same $V$ as in Lemma \ref{measure}). 
\end{lemma}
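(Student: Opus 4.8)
The plan is to feed the sequence into the concentration-compactness decomposition of Lemma~\ref{measure} and then argue that the no-concentration hypothesis forces the defect measure to vanish; once $V_n^2\rightharpoonup V^2$ as measures, strong $L^2_{loc}$ convergence follows from a short test-function computation. So I would begin by applying Lemma~\ref{measure}: after passing to the relevant subsequence, we obtain $V\in BV(B)$, points $\{x_j\}\subset B$ and weights $a_j>0$ with $\sum_j a_j<\infty$ such that $V_n^2\rightharpoonup V^2+\sum_j a_j\delta_{x_j}$ weakly in $M(B)$, and (from the proof of that lemma) also $V_n\rightharpoonup V$ weakly in $L^2(B)$.

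The main step is to show that there are no atoms, i.e.\ that the index set is empty. Suppose to the contrary that some $x_{j_0}$ occurs. For small $r>0$ pick $\phi_r\in C_c^0(B_r(x_{j_0}))$ with $0\leq\phi_r\leq1$ and $\phi_r(x_{j_0})=1$. Testing the weak-$\ast$ convergence against $\phi_r$, and using $\phi_r\geq0$, $V^2\geq0$ and $a_j>0$, we get
$$a_{j_0}\;\leq\;\int_B\phi_r\,d\Big(V^2\,dx+\sum_j a_j\delta_{x_j}\Big)\;=\;\lim_{n\to\infty}\int_B\phi_r\,V_n^2\,dx\;\leq\;\limsup_{n\to\infty}\int_{B_r(x_{j_0})}V_n^2\;=\;\limsup_{n\to\infty}\|V_n\|_{L^2(B_r(x_{j_0}))}^2.$$
Letting $r\downarrow0$, the right-hand side tends to $0$ by the hypothesis of the lemma, so $a_{j_0}\leq0$, contradicting $a_{j_0}>0$. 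Hence the defect measure is trivial and $V_n^2\rightharpoonup V^2$ weakly in $M(B)$.

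Finally, to obtain strong $L^2_{loc}$ convergence, fix $K\subset\subset B$ and choose $\psi\in C_c^0(B)$ with $0\leq\psi\leq1$ and $\psi\equiv1$ on $K$. Then
$$\int_K|V_n-V|^2\;\leq\;\int_B\psi\,V_n^2\,dx\;-\;2\int_B\psi\,V_nV\,dx\;+\;\int_B\psi\,V^2\,dx.$$
As $n\to\infty$ the first integral converges to $\int_B\psi V^2$ by the measure convergence just established, the second converges to $\int_B\psi V^2$ since $\psi V\in L^2(B)$ and $V_n\rightharpoonup V$ weakly in $L^2(B)$, and the third is constant in $n$; hence the right-hand side tends to $\int_B\psi V^2-2\int_B\psi V^2+\int_B\psi V^2=0$, so $V_n\to V$ in $L^2(K)$. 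Since $K\subset\subset B$ was arbitrary, this is precisely strong convergence in $L^2_{loc}(B)$, with the same $V$ supplied by Lemma~\ref{measure} (it is the weak $L^2$, hence $L^1_{loc}$, limit along the chosen subsequence). The only genuine obstacle here is the atom-killing step in the second paragraph; everything else is routine bookkeeping.
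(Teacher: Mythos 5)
Your proof is correct and follows essentially the same route as the paper: apply Lemma \ref{measure}, observe that the no-concentration hypothesis kills the atoms so that $V_n^2\rightharpoonup V^2$ in $M(B)$, and then expand $\int\psi|V_n-V|^2$ using weak $L^2$ convergence of $V_n$ to $V$. The only (harmless) difference is that you test the measure convergence against a continuous cutoff $\psi$, whereas the paper evaluates the measures on balls via \cite[\S 1.9 Theorem 1]{evans_gariepy} after checking the boundary has measure zero; your explicit atom-removal argument is also just a spelled-out version of the step the paper states without detail.
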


\begin{proof}
First we apply Lemma \ref{measure} and viewing $|V_n|^2 dx$ as a sequence in $M(B)$ we notice that the condition $\lim_{r\downarrow 0} \limsup _{n\rightarrow \infty} \|V_n \|_{L^2(B_{r}(x))} =0 $ simply says that $V_n^2\rightharpoonup V^2$ weakly in $M(B)$.
Therefore, given any open ball  $B_r(x)\subset\subset B$ we can apply standard results for Radon measures (\cite[\S 1.9 Theorem 1]{evans_gariepy}) to conclude that (since $\int_{\partial B_r(x)} |V|^2 dx =0$)  $\|V_n\|_{L^2(B_r(x))} \rightarrow \|V\|_{L^2(B_r(x))}$ for all $B_r(x)\subset\subset B$. Hence 
\begin{eqnarray*}
 \int_{B_r(x)} (V-V_n)^2 \, dx &=& \int_{B_r(x)} (V_n^2 - V^2)dx +2\int_{B_r(x)} V(V-V_n) \,dx \\
&\rightarrow& 0  \text{\, \, \, as \,}n\rightarrow \infty 
\end{eqnarray*}
since $V_n \rightharpoonup V$ weakly in $L^2$. 
Therefore $V_n \rightarrow V$ strongly in $L^2_{loc}(B)$.
\end{proof}

\subsection{Absorption lemma}

Special cases of the following lemma are widely used in 
regularity theory.

\begin{lemma} (Leon Simon 
\cite[\S 2.8, Lemma 2]{simon_regularity}.)
\label{LS}
Let $B_{\rho}(y) \subset \R^2$ be any ball, $k \in \R$, $\Gamma >0$, and let $\varphi$ be any $[0,\infty)$-valued convex subadditive function on the collection of convex subsets of $B_{\rho}(y)$; thus $\varphi(A) \leq \sum_{j=1}^{N} \varphi(A_j)$ whenever $A,A_1,A_2,....,A_N$ are convex subsets of $B_{\rho}(y)$ with $A\subset \bigcup_{j=1}^{N} A_j$. There is $\epsilon_0 = \epsilon_0 (k)$ such that if 
\begin{equation*}\label{leon}
\sigma^k \varphi(B_{\sigma /2}(z)) \leq \epsilon_0 \sigma^k \varphi(B_{\sigma}(z)) + \Gamma
\end{equation*}  
whenever $B_{2\sigma}(z) \subset B_{\rho}(y)$, then there exists some $C=C(k)<\infty$ such that
\begin{equation*}
\rho^k \varphi(B_{\rho /2}(y)) \leq C\Gamma.
\end{equation*} 
\end{lemma}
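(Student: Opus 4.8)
The plan is to run the standard absorption/iteration argument; the only twist beyond summing a geometric series is that one uses the subadditivity hypothesis to compensate for the fact that balls do not nest conveniently. Assume $y$ is the origin and, for the moment, that $k\geq 0$ (this is the only case needed here, with $k=4$ and $k=\tfrac{6p}{2-p}$ in the applications). For $z\in B_\rho(y)$ write $d_z:=\rho-|z|$ for the distance from $z$ to $\partial B_\rho(y)$, and set
\[
Q:=\sup_{z\in B_\rho(y)}d_z^{\,k}\,\varphi\bigl(B_{d_z/4}(z)\bigr).
\]
Our goal becomes to show $Q\leq C(k)\Gamma$, from which the stated conclusion will follow by one further covering step.

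First I would record two elementary facts. (i) Taking $N=1$ in the subadditivity hypothesis shows $\varphi$ is monotone under inclusion of convex subsets of $B_\rho(y)$; in particular $\varphi(B_r(z))\leq\varphi(B_\rho(y))<\infty$ for every sub-ball, so (using $k\geq0$ and $d_z\leq\rho$) we get $Q\leq\rho^k\varphi(B_\rho(y))<\infty$. (ii) A \emph{covering estimate}: if $B_r(z)\subset B_\rho(y)$ with $r\leq\tfrac23 d_z$, then $B_r(z)$ can be covered by an absolute number $C$ of balls $B_{r/8}(w_i)$ with $w_i\in B_r(z)$ (recall the ambient dimension is $2$); each such $w_i$ has $d_{w_i}\geq d_z-r$, so $B_{r/8}(w_i)\subset B_{d_{w_i}/4}(w_i)$, and hence, by subadditivity, monotonicity and the definition of $Q$,
\[
\varphi\bigl(B_r(z)\bigr)\;\leq\;\sum_i\varphi\bigl(B_{r/8}(w_i)\bigr)\;\leq\;\sum_i\frac{Q}{d_{w_i}^{\,k}}\;\leq\;\frac{C\,Q}{(d_z-r)^{k}}.
\]

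Now the iteration. Since $Q<\infty$ we may pick $z^\ast\in B_\rho(y)$ with $d_{z^\ast}^{\,k}\varphi(B_{d_{z^\ast}/4}(z^\ast))\geq Q/2$, and put $\sigma:=d_{z^\ast}/2$; then $B_{2\sigma}(z^\ast)\subset B_\rho(y)$ (if strict containment is required, apply the hypothesis with $(1-\varepsilon)d_{z^\ast}$ in place of $d_{z^\ast}$ and let $\varepsilon\downarrow0$ at the end). The hypothesis at $(z^\ast,\sigma)$ gives $2^{-k}d_{z^\ast}^{\,k}\varphi(B_{d_{z^\ast}/4}(z^\ast))\leq \epsilon_0\,2^{-k}d_{z^\ast}^{\,k}\varphi(B_{d_{z^\ast}/2}(z^\ast))+\Gamma$. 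The left side is $\geq 2^{-k}Q/2$; for the right side, the covering estimate with $r=d_{z^\ast}/2$ (so $d_z-r=d_{z^\ast}/2$) yields $d_{z^\ast}^{\,k}\varphi(B_{d_{z^\ast}/2}(z^\ast))\leq C\,2^{k}Q$. Combining, $Q\leq C(k)\,\epsilon_0\,Q+C(k)\,\Gamma$; choosing $\epsilon_0=\epsilon_0(k)>0$ so that $C(k)\epsilon_0\leq\tfrac12$ and absorbing gives $Q\leq C(k)\Gamma$. Finally, applying the covering estimate once more with $z=y$, $r=\rho/2$ (the relevant $w_i$ have $d_{w_i}\geq\rho/2$) converts this into $\rho^{k}\varphi(B_{\rho/2}(y))\leq C(k)\,Q\leq C(k)\,\Gamma$, as claimed.

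The main obstacle is bookkeeping rather than ideas: one must verify at each use of the hypothesis and of the definition of $Q$ that the ball in question genuinely lies inside $B_\rho(y)$ at the claimed scale, and keep track of the dimensional constants through the chain of coverings — the particular choices of radii ($d_z/4$, $r/8$, $d_{z^\ast}/2$ above) are made precisely to leave this room. The one genuinely separate point is the a priori finiteness of $Q$ when $k<0$, where $d_z^{\,k}$ blows up as $d_z\downarrow0$: there one truncates to $\{d_z\geq\tau\}$, runs the same argument to bound the truncated supremum by $C(k)\Gamma$ independently of $\tau$, and lets $\tau\downarrow0$; in the case $k\geq0$ used in this paper no such care is needed.
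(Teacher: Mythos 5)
The paper offers no proof of this lemma at all --- it is quoted verbatim from Simon's lecture notes --- so the only question is whether your reconstruction is sound. For $k\geq 0$ it is: the a priori finiteness of $Q$ (via monotonicity, which you correctly extract from the $N=1$ case of subadditivity, together with $\varphi$ being $[0,\infty)$-valued), the covering estimate, the near-maximal point $z^\ast$, and the absorption all check out, and the radii $d_z/4$, $r/8$, $d_{z^\ast}/2$ do leave the room you need ($d_{w_i}\geq d_z-r$ and $r\leq 2d_{w_i}$ hold under $r\leq\tfrac23 d_z$). This is the standard argument and, in substance, the one in Simon's book; since the paper only ever invokes the lemma with $k=4$ and $k=\tfrac{6p}{2-p}>0$, what you have written suffices for every application here.

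One caveat on your closing remark about $k<0$: the sketch there does not work as stated. For $k<0$ the inequality $d_{w_i}^{-k}\leq(d_z-r)^{-k}$ in your covering estimate reverses (now $-k>0$ and $d_{w_i}\geq d_z-r$ gives the opposite bound), and the truncation to $\{d_z\geq\tau\}$ is circular, because the covering points $w_i$ produced at a near-maximal $z^\ast$ with $d_{z^\ast}$ close to $\tau$ only satisfy $d_{w_i}\geq d_{z^\ast}/2$, which may fall below the truncation level, so the truncated supremum cannot be used to control them. A correct treatment of $k<0$ requires reorganising the argument (e.g.\ working with $\sup_\sigma \sigma^k\varphi(B_{\sigma/2}(z))$ over admissible pairs and exploiting that the inhomogeneity $\sigma^{-k}\Gamma$ now \emph{improves} at small scales), but since the statement is only applied with $k>0$ in this paper, this does not affect anything downstream.
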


\bibliographystyle{plain}

{\sc mathematics institute, university of Warwick, Coventry, CV4 7AL,
UK}\\

\end{document}